\newcommand{\F}{\mathscr{F}}
\newcommand{\Lr}{\mathscr{L}}
\newcommand{\dvol}{\frac{\omega^n}{n!}}
\newcommand{\dvolphi}{\frac{\omega_{\phi}^n}{n!}}
\newcommand{\dsubvol}{\frac{\omega^{n-1}}{(n-1)!}}
\newcommand{\dsubsubvol}{\frac{\omega^{n-2}}{(n-2)!}}
\def\End{\mathop{\mathrm{End}}\nolimits}
\newcommand{\tr}{\mathrm{tr}}
\newcommand{\R}{\mathbb{R}}
\newcommand{\C}{\mathbb{C}}
\newcommand{\Z}{\mathbb{Z}}
\def\Diff{\mathop{\mathrm{Diff}}\nolimits}
\def\cal{\mathop{\mathrm{Cal}}\nolimits}
\def\dim{\mathop{\mathrm{dim}}\nolimits}
\def\ham{\mathop{\mathrm{Ham}}\nolimits}
\def\symp{\mathop{\mathrm{Symp}}\nolimits}
\def\diff{\mathop{\mathrm{Diff}}\nolimits}
\newcommand{\E}{\mathcal{E}}
\newcommand{\J}{\mathcal{J}}
\newcommand{\LC}{\mathrm{lc}}
\newcommand{\MOm}{\mathcal{M}_{\Theta}}
\newcommand{\grad}{\textrm{grad}}
\newcommand{\M}{\mathcal{M}}
\newcommand{\extwedge}{\stackrel{\circ}{\wedge}}
\newtheorem{theoremintro}{Theorem}
\newtheorem{theoremprinc}{Theorem}
\newtheorem{theorem}{Theorem}[section]
\newtheorem{lemme}[theorem]{Lemme}
\newtheorem{cor}[theorem]{Corollary}
\newtheorem{prop}[theorem]{Proposition}
\newtheorem{defi}[theorem]{Definition}
\theoremstyle{definition} 
\newtheorem{ex}[theorem]{Exemple}
\theoremstyle{remark}
\newtheorem{rem}[theorem]{Remark}
\begin{document}

\renewcommand{\refname}{R\'ef\'erences bibliographiques}

\title{Infinite dimensional moment map geometry and closed Fedosov's star products.\footnote{Work supported by the Belgian Interuniversity Attraction Pole (IAP) DYGEST}}

\author{\small La Fuente-Gravy Laurent \\
	\scriptsize{laurent.lafuente@uclouvain.be}\\
	\footnotesize{Institut de Recherche en Math\'ematique et Physique, Universit\'e catholique de Louvain}\\[-7pt]
	\footnotesize Chemin du cyclotron, 2, 1348 Louvain-la-Neuve, Belgium \\[-7pt]} 

\maketitle

\begin{abstract}
We study the Cahen-Gutt moment map on the space of symplectic connections of a symplectic manifold. Given a K\"ahler manifold $(M,\omega,J)$, we define a Calabi-type functional $\F$ on the space $\MOm$ of K\"ahler metrics in the class $\Theta:=[\omega]$.
We study the space of zeroes of $\F$. When $(M,\omega,J)$ has non-negative Ricci tensor and $\omega$ is a zero of $\F$, we show the space of zeroes of $\F$ near $\omega$ has the structure of a smooth finite dimensional submanifold. We give a new motivation, coming from deformation quantization, for the study of moment maps on infinite dimensional spaces. More precisely, we establish a strong link between trace densities for star products (obtained from Fedosov's type methods) and moment map geometry on infinite dimensional spaces. As a byproduct, we provide, on certain K\"ahler manifolds, a geometric characterization of a space of Fedosov's star products that are closed up to order $3$ in $\nu$.
\end{abstract}

\noindent {\footnotesize {\bf Keywords:} Symplectic connections, Moment map, Deformation quantization, closed star products, K\"ahler manifolds.\\
{\bf Mathematics Subject Classification (2010):}  53D20, 53C21, 32Q15, 53D55}

\newpage

\tableofcontents

%%%%%%%%%%%%%%%%%%%%%%%%%%%%%%%%%%%%%%%%%%%%%%%%%%%%%%%%%%%%%%%%%%%%%%%%%%%%%%%%%%%%%%%%%%%%%%%%%%%%%%%%
%%%%%%%%%%%%%%%%%%%%%%%%%%%%%%%%%%%%%%%%%%%%%%%%%%%%%%%%%%%%%%%%%%%%%%%%%%%%%%%%%%%%%%%%%%%%%%%%%%%%%%%%

\section{Introduction}

The idea that curvature can be seen as moment map on various infinite dimensional spaces has been a great source of inspiration in mathematics, see Atiyah-Bott \cite{AB}, Donaldson \cite{Donald,Donald2}.

We give a new motivation, coming from deformation quantization \cite{BFFLS}, for the study of moment maps on infinite dimensional spaces. More precisely, we establish a strong link between trace densities for star products (obtained from Fedosov's type methods) and moment map geometry on infinite dimensional spaces. We study the Cahen-Gutt moment map on the space of symplectic connections \cite{cagutt}.
As a byproduct, we provide, on certain K\"ahler manifolds, a geometric characterization of a space of Fedosov's star products that are closed up to order $3$ in $\nu$.

Consider a symplectic manifold $(M,\omega)$. The space $\E(M,\omega)$ of symplectic connections (torsion-free connection $\nabla$ with $\nabla \omega=0$) on $(M,\omega)$ is in a natural way, an infinite dimensional symplectic manifold. The group $\ham(M,\omega)$ of Hamiltonian diffeomorphisms of $(M,\omega)$ acts symplectically on $\E(M,\omega)$. Cahen and Gutt \cite{cagutt} computed a moment map $$\mu:\nabla\in \E(M,\omega) \mapsto \mu(\nabla)\in C^{\infty}(M)$$ for this action (see Theorem \ref{theor:momentE} below). Fox \cite{Fox} defines the notion of extremal symplectic connections. Those are connections that are critical points of the squared $L^2$-norm of $\mu$.

%Now, to any connection $\nabla\in \E(M,\omega)$ one associates its corresponding Fedosov's star product $*_{\nabla,0}$. We observe that the moment map property of $\mu$ implies that $\mu(\nabla)$ is the first (potentially) non-trivial term of a trace density for $*_{\nabla,0}$. Fedosov gives in \cite{fed4} an algorithm to compute completely the trace density for $*_{\nabla,0}$. 

Let $(M,\omega,J)$ be a closed K\"ahler manifold and set $\Theta$ the K\"ahler class of $\omega$. Denote by $\MOm$ the space of K\"ahler forms in the class $\Theta$, it is a Fr\'echet (infinite dimensional) manifold. We define a Calabi type functional $\F$ on $\MOm$ that is the square of the $L^2$-norm of $\mu$ (where both the $L^2$-norm and $\mu$ are taken with respect to the point where we are in $\MOm$). Our Theorem \ref{theorprinc:zeroF} gives the structure of the space of zeroes of $\F$ in a neighbourhood of a K\"ahler metric which is a zero of $\F$ with Ricci tensor non-negative everywhere.

\begin{theoremintro}\label{theorprinc:zeroF}
Assume the closed K\"ahler manifold $(M,\omega,J)$ has a Ricci tensor which is non-negative everywhere. 

If $\omega$ is a zero of $\F$, then there exists an open neighbourhood $U$ of $\omega$ in $\MOm$ such that the only zeroes of $\F$ inside $U$ is the $H_0(M,J)$-orbit of $\omega$, where $H_0(M,J)$ is the reduced automorphism group of $(M,\omega,J)$.
\end{theoremintro}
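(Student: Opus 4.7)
\medskip
\noindent\textbf{Proof plan.}
My plan is to reduce the statement to an infinite–dimensional Morse–Bott / slice argument around $\omega$, in the same spirit as classical rigidity results for extremal K\"ahler metrics. Since $\F(\omega')=\|\mu(\nabla^{\omega'})\|_{L^2(\omega')}^2$ and the integrand is non-negative, the zeroes of $\F$ near $\omega$ are precisely the K\"ahler metrics $\omega_{\phi}=\omega+i\partial\bar\partial\phi$ in $\MOm$ with $\mu(\nabla^{\omega_{\phi}})\equiv 0$. So the problem is to describe the zero set $Z=\mu^{-1}(0)\cap U$ of a moment map on $\MOm$ near a fixed zero, and to identify it with the orbit of the reduced automorphism group $H_0(M,J)$.

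First I would compute the linearization $L_{\omega}:T_{\omega}\MOm\to C^{\infty}(M)$ of the map $\phi\mapsto\mu(\nabla^{\omega_{\phi}})$ at $\phi=0$. Using the Cahen--Gutt formula for $\mu$ (Theorem \ref{theor:momentE}) and the fact that along $\MOm$ the connection is the Levi-Civita/Chern connection of $\omega_{\phi}$, this linearization is a scalar differential operator in $\phi$, self-adjoint with respect to the $L^2$-pairing on functions (this self-adjointness is the infinitesimal version of the moment map identity $\langle d\mu(X),f\rangle=\omega(X,X_f)$ on $\MOm$). The key analytic input I would then establish is a Weitzenb\"ock/Bochner-type identity expressing $L_{\omega}$ as a manifestly non-negative elliptic operator of the form $D^*D$ plus a zeroth order term controlled by the Ricci tensor; under the hypothesis $\mathrm{Ric}\ge 0$ this operator is non-negative, and its kernel is exactly the space of real holomorphy potentials, i.e.\ $\ker L_{\omega}\simeq\mathrm{Lie}(H_0(M,J))/\mathbb{R}$.

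Once this is in place, the standard slice machinery finishes the argument. I would invoke an implicit function theorem (working in appropriate Sobolev or tame Fr\'echet scales, exactly as in Fujiki/Donaldson/LeBrun--Simanca) to show that $Z$ is, near $\omega$, a smooth submanifold of $\MOm$ whose tangent space at $\omega$ is $\ker L_{\omega}$: the non-negativity of $L_{\omega}$ upgrades the equation $\mu(\omega_{\phi})=0$ to the gradient equation of a convex functional in the direction transverse to $\ker L_{\omega}$, so on a slice complementary to $\ker L_{\omega}$ the only nearby zero is $\phi=0$. Since the $H_0(M,J)$-action on $\MOm$ preserves $Z$ and its infinitesimal orbit at $\omega$ is precisely $\ker L_{\omega}$, the $H_0(M,J)$-orbit of $\omega$ and $Z$ are smooth submanifolds of the same (finite) dimension through the same point, hence coincide in a neighbourhood~$U$.

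The routine parts are the equivariance of $\mu$ under $\ham(M,\omega)$ and the functional-analytic setup of the implicit function theorem; the main obstacle is the Weitzenb\"ock step in the second paragraph. The Cahen--Gutt moment map is of higher order than the scalar curvature, so $L_{\omega}$ is of order higher than the usual Lichnerowicz operator, and one must carefully integrate by parts, using $\nabla\omega=0$ and the K\"ahler identities, to rewrite $\langle L_{\omega}\phi,\phi\rangle_{L^2}$ as a sum of squares of natural differential expressions in $\phi$ plus a curvature term of the form $\int_M \mathrm{Ric}(\cdot,\cdot)\ge 0$. Identifying the kernel of this quadratic form with holomorphy potentials, via the equality case of the Bochner inequality, is what pins down $\dim Z=\dim H_0(M,J)\cdot\omega$ and closes the proof.
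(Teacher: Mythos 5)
Your proposal is correct and follows essentially the same route as the paper: linearize $\phi\mapsto\mu^{\phi}(\nabla^{\phi})$ to get a sixth-order elliptic operator $D$, prove via an integration-by-parts/Bochner argument (the paper's Proposition \ref{prop:omeganegative}, phrased there as non-degeneracy of $\LC^{*}\Omega^{\E}$ on the space of complex structures) that $\mathrm{Ric}\ge 0$ forces $\ker D$ to be exactly the holomorphy potentials, and then run the implicit function theorem in Sobolev spaces on the non-negative Hessian $D^{*}D$ of $\F$, identifying the resulting finite-dimensional zero locus with the $H_0(M,J)$-orbit by the containment-plus-equal-dimension argument. The only cosmetic difference is that the paper organizes the Weitzenb\"ock step through the pulled-back form $\LC^{*}\Omega^{\E}$ on $\J_{int}(M,\omega)$ and its ``Condition C'' rather than directly on $\langle L_{\omega}\phi,\phi\rangle_{L^2}$.
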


Deformation quantization as defined in \cite{BFFLS} is a formal associative deformation of the Poisson algebra $(C^{\infty}(M),.,\{\cdot,\cdot\})$ of a Poisson manifold $(M,\pi)$ in the direction of the Poisson bracket. The deformed algebra is the space $C^{\infty}(M)[[\nu]]$ of formal power series of smooth functions with composition law $*$ called star product. The existence of star products was obtained first in the symplectic case by Dewilde-Lecomte \cite{DWL}, Fedosov \cite{fed2} and Omori-Maeda-Yoshioka \cite{OMY} and finally in the Poisson case by Kontsevitch \cite{Kon}.

Star products on symplectic manifolds admit a trace \cite{fed3,fed,NT,gr} that is a $\nu$-linear functional defined on formal series of smooth functions with compact support with values in $\R[[\nu]]$ that vanishes on the star commutators. A trace for a star product $*$ is determined by its trace density $\rho^{(*)}\in C^{\infty}(M)[\nu^{-1},\nu]]$ :
$$\tr^*(F)=\int_M F\rho^{(*)} \dvol.$$ 
Closed star products in the sense of Connes, Flato, Sternheimer \cite{CFS} are star products for which $\rho^{(*)}\equiv 1$ is a trace density up to order $\frac{\dim M}{2}$ in $\nu$. 

We consider star products on symplectic manifolds obtained by the geometric Fedosov's construction. On a symplectic manifold $(M,\omega)$, for any choice of symplectic connection $\nabla$ and formal power series $\Omega$ of closed $2$-forms the Fedosov's method produces a star product, denoted in the sequel by $*_{\nabla,\Omega}$. Our main observation is that $\mu(\nabla)$ is the first non-trivial term of a trace density for $*_{\nabla,0}$. 

The above Theorem \ref{theorprinc:zeroF} gives in fact the local structure of a space of ``natural'' Fedosov's star products on a K\"ahler manifold that are closed up to order $3$ in $\nu$. Let $(M,\omega,J)$ be a closed K\"ahler manifold with $\Theta:=[\omega]$. To any $\widetilde{\omega}\in \MOm$, one associates the Fedosov's star product $*_{\widetilde{\nabla},0}$ with $\widetilde{\nabla}$ the corresponding Levi-Civita connection. 

\begin{theoremintro} \label{theorprinc:*closed}
Let $(M,\omega,J)$ be a closed K\"ahler manifold with Levi-Civita connection $\nabla$ and Ricci tensor everywhere non negative. Assume the Fedosov's star product $*_{\nabla,0}$ is closed up to order $3$ in $\nu$.

Then, there exists an open neighbourhood $U$ of $\omega$ in $\MOm$ such that if $\widetilde{\omega}\in U$ gives rise
to the star product $*_{\widetilde{\nabla},0}$ closed up to order $3$ in $\nu$ then there is an $f\in H_0(M,J)$ inducing an isomorphism
$$f^*:(C^{\infty}(M)[[\nu]],*_{\widetilde{\nabla},0})\stackrel{\cong}{\rightarrow}(C^{\infty}(M)[[\nu]],*_{\nabla,0}).$$
\end{theoremintro}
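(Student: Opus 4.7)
The strategy is to reduce the statement to Theorem~\ref{theorprinc:zeroF} via the paper's main observation identifying $\mu(\nabla)$ with the first non-trivial coefficient of a trace density for $*_{\nabla,0}$. First, I would translate the closedness hypothesis into a moment map condition: since requiring $1$ to serve as a trace density for $*_{\widetilde{\nabla},0}$ up to order $3$ in $\nu$ forces the first non-trivial coefficient of the natural Fedosov trace density to vanish, and since that coefficient is (a nonzero multiple of) $\mu(\widetilde{\nabla})$, closedness up to order $3$ yields $\mu(\widetilde{\nabla}) \equiv 0$. Equivalently, $\widetilde{\omega}$ is a zero of the Calabi-type functional $\F$. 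The same argument, applied to $\nabla$, shows that $\omega$ itself is a zero of $\F$.

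With both $\omega$ and $\widetilde{\omega}$ in the zero set of $\F$, the next step is to invoke Theorem~\ref{theorprinc:zeroF}: under the non-negative Ricci assumption, there exists a neighbourhood $U$ of $\omega$ in $\MOm$ in which the zero set of $\F$ coincides with the $H_0(M,J)$-orbit of $\omega$. Hence any $\widetilde{\omega} \in U$ satisfying the closedness hypothesis is of the form $\widetilde{\omega} = h^*\omega$ for some $h \in H_0(M,J)$. Since $h$ is biholomorphic with $h^*\omega = \widetilde{\omega}$, the associated K\"ahler metrics satisfy $\widetilde{g} = h^*g$, and naturality of Levi-Civita under isometries yields $\widetilde{\nabla} = h^*\nabla$. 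Setting $f := h^{-1} \in H_0(M,J)$, we obtain $f^*\widetilde{\omega} = \omega$ and $f^*\widetilde{\nabla} = \nabla$, so $f$ is an isomorphism of the triples $(M, \omega, \nabla)$ and $(M, \widetilde{\omega}, \widetilde{\nabla})$. Because the geometric Fedosov construction is functorial in its input data (symplectic form and symplectic connection), pullback by $f$ extends to the required algebra isomorphism
$$f^* : (C^{\infty}(M)[[\nu]], *_{\widetilde{\nabla},0}) \stackrel{\cong}{\rightarrow} (C^{\infty}(M)[[\nu]], *_{\nabla,0}).$$

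The main obstacle is the first step: one must establish a precise dictionary between the closedness condition on $*_{\widetilde{\nabla},0}$ up to order $3$ in $\nu$ and the pointwise vanishing of $\mu(\widetilde{\nabla})$. This requires carefully computing the low-order coefficients of a trace density produced by Fedosov's construction, checking that the obstructions at orders $1$ and $2$ are either tautological or proportional to $\mu(\widetilde{\nabla})$, and confirming that the order-$3$ coefficient is a nonzero multiple of $\mu(\widetilde{\nabla})$ (modulo exact terms that do not affect tracial vanishing). Once this analytic/algebraic link is in place, the geometric content of Theorem~\ref{theorprinc:zeroF} together with the naturality of Fedosov's procedure deliver the result immediately.
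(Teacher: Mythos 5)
Your proposal is correct and follows essentially the same route as the paper: the dictionary you flag as the main obstacle is precisely the paper's Proposition~\ref{prop:trace}, which shows via the moment map identity (\ref{eq:momentmu}) that $\rho=1+\frac{\nu^2}{24}\mu(\nabla)$ is a trace density up to order $3$ (the order-$1$ and order-$2$ obstructions vanishing automatically since $[F,H]_{*_{\nabla,0}}=\nu\{F,H\}+\frac{\nu^3}{24}S^3_\nabla(F,H)+O(\nu^4)$), so closedness up to order $3$ is equivalent to $\mu(\nabla)=0$, i.e.\ $\F(\omega)=0$. The remainder — invoking Theorem~\ref{theorprinc:zeroF} and the naturality of the Fedosov construction under the pullback $f^*$ with $f\in H_0(M,J)$ — is exactly the paper's argument.
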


Finally, we consider more general Fedosov's star products and the Wick star product obtained by Bordemann-Waldmann \cite{BW}. In both case, we observe that the first non-trivial term of a trace density can be interpreted as a  moment map on an infinite dimensional manifold.

%%%%%%%%%%%%%%%%%%%%%%%%%%%%%%%%%%%%%%%%%%%%%%%%%%%%%%%%%%%%%%%%%%%%%%%%%%%%%%%%%%%%%%%%%%%%%%%%%%%%%%%%
%%%%%%%%%%%%%%%%%%%%%%%%%%%%%%%%%%%%%%%%%%%%%%%%%%%%%%%%%%%%%%%%%%%%%%%%%%%%%%%%%%%%%%%%%%%%%%%%%%%%%%%%

\section*{Acknowledgment} We thank Simone Gutt and Michel Cahen for their encouragement and the interest they showed in our work. We thank Till Br\"onnle who introduced us to the moment map interpretation of the scalar curvature. We also desire to thank Joel Fine and Mehdi Lejmi for valuable discussions.

%%%%%%%%%%%%%%%%%%%%%%%%%%%%%%%%%%%%%%%%%%%%%%%%%%%%%%%%%%%%%%%%%%%%%%%%%%%%%%%%%%%%%%%%%%%%%%%%%%%%%%%%
%%%%%%%%%%%%%%%%%%%%%%%%%%%%%%%%%%%%%%%%%%%%%%%%%%%%%%%%%%%%%%%%%%%%%%%%%%%%%%%%%%%%%%%%%%%%%%%%%%%%%%%%

\section{Symplectic connections} \label{sect:symplconn}

%%%%%%%%%%%%%%%%%%%%%%%%%%%%%%%%%%%%%%%%%%%%%%%%%%%%%%%%%%%%%%%%%%%%%%%%%%%%%%%%%%%%%%%%%%%%%%%%%%%%%%%%

Consider a symplectic manifold $(M,\omega)$ of dimension $2n$. A symplectic connection $\nabla$ on $(M,\omega)$ is a torsion-free connection such that $\nabla \omega=0$. There always exists a symplectic connection on a symplectic manifold but this connection is not unique.
Indeed, given a symplectic connection $\nabla$ on $(M,\omega)$, 
any other symplectic connection on $(M,\omega)$ is of the form
$$\nabla'_X Y = \nabla_X Y + A(X)Y$$
where $A$ is a section of $\Lambda^1 (M)\otimes End(TM,\omega)$ satisfying $A(X)Y-A(Y)X=0$. Equivalently, $\omega(A(X)Y,Z)$ is a completely symmetric 
$3$-tensor, i.e. a section of $S^3T^*M$. Then, the space $\E(M,\omega)$ of symplectic connections is the affine space
$$\E(M,\omega)=\nabla+ \Gamma(S^3T^*M) \textrm{ for some } \nabla \in \E(M,\omega).$$

We assume from now that $(M,\omega)$ is closed. Then, there is a natural symplectic form on $\E(M,\omega)$. For $A, B \in T_{\nabla} \E(M,\omega)$, seen as elements of $\Lambda^1 (M)\otimes End(TM,\omega)$, one defines
$$\Omega^{\E}_{\nabla}(A,B):= 2\int_M \tr(A\extwedge B)\wedge \dsubvol= -\int_M \Lambda^{kl}\tr(A(e_k)B(e_l))\frac{\omega^n}{n!},$$
where $\extwedge$ is the product on $\Lambda^1 (M)\otimes End(TM,\omega)$ induced by the usual $\wedge$-product on forms and the composition on the endomorphism part, $\Lambda^{kl}$ is defined by $\Lambda^{kl}\omega_{lt}=\delta^k_t$ for $\omega_{lt}:=\omega(e_l,e_t)$ for a frame $\{e_k\}$ of $T_xM$. 
The $2$-form $\Omega^{\E}$ is a symplectic form on $\E(M,\omega)$. 

\begin{rem}
The constant $2$ appearing in the definition of $\Omega^{\E}$ is introduced to fit with the form defined in \cite{cagutt}. Indeed Cahen and Gutt defined $\Omega^{\E}$ for $A,B\in T_{\nabla} \E(M,\omega)$, seen as element of $\Gamma(S^3T^*M)$, by 
$$\Omega^{\E}_{\nabla}(A,B):=\int_M \Lambda^{i_1j_1}\Lambda^{i_2j_2}\Lambda^{i_3j_3}A_{i_1i_2i_3}B_{j_1j_2j_3}\frac{\omega^n}{n!}.$$
\end{rem}

A positive almost complex structure $J$ compatible with $\omega$
induces a positive almost complex structure $J^{\E}$ compatible with $\Omega^{\E}$ defined by
\begin{equation} \label{eq:JE} 
\left(J^{\E}(A) \right)(X)Y:= -JA(JX)JY,\ \  \forall A\in T_{\nabla}\E(M,\omega).
\end{equation}
And, as usual, one defines the associated Riemannian metric $G^{J^{\E}}$ on $\E(M,\omega)$ by 
\begin{equation} \label{eq:GJE}
G^{J^{\E}}_{\nabla}(A,B):=\Omega_{\nabla}(A,J^{\E}B)=\int_M g^{kl}\tr(JA(e_k)JB(e_l))\frac{\omega^n}{n!}.
\end{equation}

There is a natural symplectic action of the group of symplectic diffeomorphisms on $\E(M,\omega)$. For $\varphi$, a symplectic
diffeomorphism, we define an action
\begin{equation} \label{eq:action}
(\varphi.\nabla)_X Y := \varphi_*(\nabla_{\varphi^{-1}_* X}\varphi^{-1}_* Y),
\end{equation}
for all $X,Y \in TM$ and $\nabla \in \E(M,\omega)$. 

Recall that a Hamiltonian vector field is a vector field $X_F$ for $F\in C^{\infty}(M)$ such that
$$i(X_F)\omega=dF.$$
We denote by $\ham(M,\omega)$ the group of Hamiltonian diffeomorphisms of the symplectic manifold $(M,\omega)$. The group $\ham(M,\omega)$ is a Lie group with Lie algebra the space $C^{\infty}_0(M)$ of normalised smooth functions \cite{ban2}, i.e. smooth functions $F$ such that $\int_M F \dvol=0$. 

The action defined in Equation (\ref{eq:action}) restricts to an action of the group $\ham(M,\omega)$. Let $X_F$ be a Hamiltonian vector field with $F\in C^{\infty}_0(M)$, the fundamental vector field on $\E(M,\omega)$ associated to this action is
\begin{equation*}
(X_F)^{*\E}(Y)Z:=\frac{d}{dt}|_0\varphi_{-t}^F.\nabla=(\Lr_{X_F}\nabla)(Y)Z=\nabla^2_{(Y,Z)}X_F + R^{\nabla}(X_F,Y)Z,
\end{equation*}
where $R^{\nabla}(U,V)W:=[\nabla_U,\nabla_V]W-\nabla_{[U,V]}W$ is the curvature tensor of $\nabla$.
A moment map for the action of $\ham(M,\omega)$ on $\E(M,\omega)$ is a map $\tilde{\mu}:\E(M,\omega) \rightarrow C^{\infty}(M)$, where $C^{\infty}(M)$ is viewed, using the $L^2$-product, as a subspace of the dual of $C^{\infty}_0(M)$, satisfying
\begin{equation} \label{eq:momentmu}
\frac{d}{dt}|_{0} \int_M \tilde{\mu}(\nabla+tA)F\dvol=\Omega^{\E}_{\nabla}((X_F)^{*\E},A)=\Omega^{\E}_{\nabla}(\Lr_{X_F}\nabla,A),
\end{equation}
where $\nabla \in \mathcal{E}(M,\omega) $ and $A \in T_{\nabla}\mathcal{E}(M,\omega)$. 

Denote by $Ric^{\nabla}$ the Ricci tensor of $\nabla$ defined by
$$Ric^{\nabla}(X,Y):=\tr[V\mapsto R^{\nabla}(V,X)Y]$$
for all $X,Y \in TM$. 

\begin{theorem} [Cahen-Gutt \cite{cagutt}] \label{theor:momentE}
The map $\tilde{\mu}:\mathcal{E}(M,\omega) \rightarrow C^{\infty}(M)$ defined by
$$\tilde{\mu}(\nabla):=(\nabla^2_{(e_p,e_q)} Ric^{\nabla})(e^{p},e^q) + P(\nabla)$$
where $\{e_k\}$ is a frame of $T_{x}M$, $\{e^l\}$ is the symplectic dual frame of $\{e_k\}$ (that is $\omega(e_k,e^l)=\delta_k^l$) and $P(\nabla)$ is the function defined by $P(\nabla)\dvol:=\frac{1}{2}\tr(R^{\nabla}(.,.)\extwedge R^{\nabla}(.,.))\wedge \dsubsubvol$, is a moment map for the action of $\ham(M,\omega)$ on $\E(M,\omega)$.
\end{theorem}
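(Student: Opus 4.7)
To prove that $\tilde\mu$ is a moment map, two items need to be verified: the equivariance $\tilde\mu(\varphi\cdot\nabla)=\varphi^{*}\tilde\mu(\nabla)$ for $\varphi\in\ham(M,\omega)$, and the infinitesimal identity (\ref{eq:momentmu}). Equivariance is automatic because $\tilde\mu(\nabla)$ is built tensorially from $R^{\nabla}$, $Ric^{\nabla}$, $\nabla$ and $\omega$, each of which transforms correctly under symplectomorphisms. The substance of the proof lies in the infinitesimal identity.

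For the left-hand side of (\ref{eq:momentmu}), I begin from the linearization of curvature under $\nabla_{t}:=\nabla+tA$ with $A\in\Gamma(S^{3}T^{*}M)$:
\[
\frac{d}{dt}\Big|_{0}R^{\nabla_{t}}(X,Y)Z=(\nabla_{X}A)(Y)Z-(\nabla_{Y}A)(X)Z,
\]
which follows from the torsion-freeness of $\nabla$. Contraction yields the linearization of $Ric^{\nabla}$, and Leibniz applied to the $t$-derivative of $\nabla^{2}$ produces the linearization of $\nabla^{2}Ric^{\nabla}$, with additional terms of the form $A\cdot Ric^{\nabla}$ and $A\cdot\nabla Ric^{\nabla}$. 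The term $P(\nabla)$ linearizes as a pointwise contraction $\tr(R^{\nabla}\extwedge\dot R)$. Multiplying by $F$, integrating against the Liouville volume, and iterating integration by parts until no derivatives remain on $A$, produces an integrand of the form $\langle A,T(F,R^{\nabla})\rangle$ for a natural $3$-tensor $T$ depending on $F$ up to third order and on $R^{\nabla}$ and its covariant derivatives.

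For the right-hand side, I expand
\[
(\Lr_{X_{F}}\nabla)(Y)Z=\nabla^{2}_{(Y,Z)}X_{F}+R^{\nabla}(X_{F},Y)Z
\]
and use the symmetric $3$-tensor formulation of $\Omega^{\E}$ recalled in the Remark to rewrite $\Omega^{\E}_{\nabla}(\Lr_{X_{F}}\nabla,A)$ as a pointwise contraction of $A$ with a tensor built from $\nabla^{2}X_{F}$ and $R^{\nabla}(X_{F},\cdot)$. Since $X_{F}$ is the symplectic gradient of $F$, the $\nabla^{2}X_{F}$ piece is a contracted third covariant derivative of $F$, which should match the $\nabla^{2}Ric^{\nabla}$ contribution to the LHS after the integrations by parts, while $R^{\nabla}(X_{F},\cdot)$ should match the $P(\nabla)$ contribution.

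The computational heart is verifying that these two matchings hold simultaneously. I expect the main obstacle to be the bookkeeping of curvature corrections: every commutator $[\nabla,\nabla]$ used to reorder covariant derivatives during integration by parts generates an $R^{\nabla}$-term, and these terms must combine with the linearization of $P(\nabla)$ and with repeated use of the second Bianchi identity on $\nabla Ric^{\nabla}$ to reproduce exactly the mixed term $R^{\nabla}(X_{F},\cdot)$ with the correct sign and normalization. The complete symmetry of $A$ as a section of $S^{3}T^{*}M$, together with $A$ taking values in $\mathfrak{sp}(T_{x}M,\omega)$, is what forces the remaining algebraic identities to close up and produces the specific shape of $\tilde\mu$ stated above.
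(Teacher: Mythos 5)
The paper does not actually prove this theorem: it is quoted verbatim from Cahen--Gutt \cite{cagutt}, so there is no internal proof to compare against. Judged on its own terms, your proposal is a correct \emph{plan} but not a proof. The setup is sound: equivariance is indeed automatic from naturality, the Palatini-type linearization $\frac{d}{dt}\big|_{0}R^{\nabla_t}(X,Y)Z=(\nabla_XA)(Y)Z-(\nabla_YA)(X)Z$ is correct, the formula for $(\Lr_{X_F}\nabla)(Y)Z$ matches the one in the paper, and the strategy of integrating by parts until all derivatives are off $A$ and then comparing with the pointwise contraction defining $\Omega^{\E}_{\nabla}(\Lr_{X_F}\nabla,A)$ is exactly how such a verification must go.

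The gap is that the decisive computation is never carried out: you write that the $\nabla^{2}X_F$ piece ``should match'' the $\nabla^{2}Ric^{\nabla}$ contribution and that the curvature corrections ``must combine'' with the linearization of $P(\nabla)$, but this matching \emph{is} the theorem. Nothing in your argument pins down the coefficient $\tfrac12$ in $P(\nabla)$, the relative sign between the two terms of $\tilde\mu$, or even the fact that the leftover curvature terms close up at all rather than leaving an uncancelled remainder; your sketch would read identically for any other candidate formula with the same schematic structure. Since the only nontrivial content of the statement is that this \emph{specific} combination of $(\nabla^{2}Ric^{\nabla})(e^p,e^q)$ and $P(\nabla)$ satisfies the identity (\ref{eq:momentmu}), deferring the ``bookkeeping of curvature corrections'' to an expectation leaves the proof essentially unstarted. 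To complete it you would need to write out both sides in index notation, perform the (two to three) integrations by parts explicitly, track each commutator term $[\nabla,\nabla]=R^{\nabla}$, apply the second Bianchi identity where you indicate, and exhibit the term-by-term cancellation --- i.e.\ reproduce the computation of \cite{cagutt}.
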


\begin{rem}
It is possible to extend the above theorem to non compact symplectic manifolds. Of course, the symplectic
form $\Omega^{\E}$ is only defined on $L^2$-section of $S^3T^*M$ for the $L^{2}$-norm induced by $G^{J^\E}$. 
\end{rem}

\begin{rem}
On a closed symplectic manifold, since $(\nabla^2_{(e_p,e_q)} Ric^{\nabla})(e^{p},e^q)$ has vanishing integral
the constant $\mu_0:=\int_{M} \tilde{\mu}(\nabla) \dvol$ is a topological constant. Hence, $\mu_0$ does not depend on the symplectic conection.
\end{rem}

\begin{rem}
One can develop the expression of $P(\nabla)$ and obtain
$$\tilde{\mu}(\nabla):=(\nabla^2_{(e_p,e_q)} Ric^{\nabla})(e^{p},e^q)  -\frac{1}{2} Ric^{\nabla}_{pq}(Ric^{\nabla})^{pq} + \frac{1}{4} R^{\nabla}_{pqrs}(R^{\nabla})^{pqrs}$$
where $R^{\nabla}_{pqrs}:=\omega(R^{\nabla}(e_p,e_q)e_r,e_s)$ for a frame $\{e_k\}$ of $T_{x}M$ and $(R^{\nabla})^{pqrs}:=\omega(R^{\nabla}(e^p,e^q)e^r,e^s)$ for the symplectic dual frame $\{e^l\}$ of $\{e_k\}$.
\end{rem}

\begin{defi}
On a closed symplectic manifold $(M,\omega)$, we will call the {\bf (normalized) moment map} the map $\mu$ defined by
$$\mu : \E(M,\omega) \rightarrow C^{\infty}_0(M): \nabla \mapsto \tilde{\mu}(\nabla)-\mu_0.$$
\end{defi}

We are interested in zeroes of the normalised moment map $\mu$ when the symplectic manifold is K\"ahler.
On a K\"ahler manifold $(M,\omega,J)$, one has a particular choice of symplectic connection : the Levi-Civita connection $\nabla$. Using the second Bianchi identity, one has $\Lambda^{kl}(\nabla_{e_l} Ric^{\nabla})(e_k,\cdot)=-\frac{1}{2} dScal^{\nabla}(J\cdot)$ where $Scal^{\nabla}$ is the scalar curvature. The moment map reduces to 
$$\mu(\nabla)=-\frac{1}{2}\Delta Scal^{\nabla}  -\frac{1}{2} Ric^{\nabla}_{pq}(Ric^{\nabla})^{pq} + \frac{1}{4} R^{\nabla}_{pqrs}(R^{\nabla})^{pqrs}-\mu_0.$$
Of course the connection is fixed in $\E(M,\omega)$. But one can vary other datas of the K\"ahler manifold such as the complex structure or the K\"ahler form. The advantage is that the space of complex structures compatible with $\omega$ and the space of
 K\"ahler forms in the same K\"ahler class as $\omega$ are ``smaller'' (still infinite dimensional) than $\E(M,\omega)$.

\noindent Let us give some examples of zeroes of $\mu$ on K\"ahler manifolds.

\begin{ex}
Assume $(M,\omega,J)$ is a K\"ahler manifold of dimension $2$. As pointed out by D.J. Fox \cite{Fox}, the moment map evaluated at the Levi-Civita connection $\nabla$ reduces to
$$\mu(\nabla)=-\frac{1}{2} \Delta Scal^{\nabla}.$$
So that $\mu(\nabla)=0$ if and only if the scalar curvature is constant.
\end{ex} 

\begin{ex}
On closed K\"ahler manifolds with constant holomorphic sectionnal curvature, the Levi-Civita connection provides a zero of the moment map.
\end{ex}

In \cite{Fox}, Fox studies the functional on $\E(M,\omega)$ defined by the $L^2$-norm of $\mu$. He describes its critical points when the symplectic manifold has dimension $2$.

In Section \ref{sect:cal}, we consider a K\"ahler manifold and define a similar functional on the space of K\"ahler metrics in the same K\"ahler class and study its zeroes.

%%%%%%%%%%%%%%%%%%%%%%%%%%%%%%%%%%%%%%%%%%%%%%%%%%%%%%%%%%%%%%%%%%%%%%%%%%%%%%%%%%%%%%%%%%%%%%%%%%%%%%%%
%%%%%%%%%%%%%%%%%%%%%%%%%%%%%%%%%%%%%%%%%%%%%%%%%%%%%%%%%%%%%%%%%%%%%%%%%%%%%%%%%%%%%%%%%%%%%%%%%%%%%%%%

\section{The space $\J(M,\omega)$}

%%%%%%%%%%%%%%%%%%%%%%%%%%%%%%%%%%%%%%%%%%%%%%%%%%%%%%%%%%%%%%%%%%%%%%%%%%%%%%%%%%%%%%%%%%%%%%%%%%%%%%%%

In this section, we consider a closed K\"ahler manifold $(M,\omega, J)$.
We study the Levi-Civita map $\LC:\J_{int}(M,\omega) \rightarrow \E(M,\omega)$ where $\J_{int}(M,\omega)$ is the space
of integrable complex structures on $M$ compatible with $\omega$ and $\LC(\widetilde{J}):=\nabla^{\widetilde{J}}$ is the Levi-Civita connection of the metric
$g_{\widetilde{J}}(.,.):=\omega(.,\widetilde{J}.)$ with $\widetilde{J}$ integrable. 

The goal of this section is to obtain conditions implying a certain non-degeneracy of $\LC^*\Omega^{\E}$. This is a technical condition that will show up in our study of zeroes of $\mu$ in Section \ref{sect:cal} (see Proposition \ref{prop:Dphi})

%%%%%%%%%%%%%%%%%%%%%%%%%%%%%%%%%%%%%%%%%%%%%%%%%%%%%%%%%%%%%%%%%%%%%%%%%%%%%%%%%%%%%%%%%%%%%%%%%%%%%%%%

\subsection{The K\"ahler structure on $\J(M,\omega)$} \label{subsect:JMomega}

%%%%%%%%%%%%%%%%%%%%%%%%%%%%%%%%%%%%%%%%%%%%%%%%%%%%%%%%%%%%%%%%%%%%%%%%%%%%%%%%%%%%%%%%%%%%%%%%%%%%%%%%

First, let us describe the space $\J(M,\omega)$ and its tangent space.

\begin{defi}
Let $(M,\omega)$ be a symplectic manifold. A {\bf compatible almost complex structure $J$} on $(M,\omega)$ 
is a section of $\End(TM)$ such that $J^2=-Id$, $\omega(J.,J.)=\omega(.,.)$ and $\omega(.,J.)$ is a Riemannian metric.
We set $\J(M,\omega)$ the {\bf space of all almost complex structures compatible with $\omega$}.
\end{defi}

%Because compatible almost complex structures are sections of the bundle of endomorphisms of $TM$,
%the difference $J-J'=A$, where $J,J' \in \J(M,\omega)$, is in $\Gamma(\End(TM))$. Then, $(J'+A)^2=-Id$,
%that is
%\begin{equation*}
%AJ'+J'A=-A^2.
%\end{equation*}
%And, because $\omega(X,JY)=\omega(Y,JX)$ (resp. for $J'$), we have
%\begin{equation*}
%\omega(X,AY)+\omega(AX,Y)=0.
%\end{equation*}
%This means that the tensor $\omega(.,A.)$ is symmetric.

\noindent The tangent space to $\J(M,\omega)$ at a point $J$ is
$$T_{J}\J(M,\omega)= \{A\in End(TM,\omega) \textrm{ s.t. } AJ+JA=0\}.$$

The space $\J(M,\omega)$ admits a K\"ahler structure. For $A,B \in T_{J}\J(M,\omega)$, we define a metric $G_J$
and a complex structure $I_J$ by : 
$$G^{\J}_J(A,B):=\int_M \tr(AB)\dvol \textrm{ and } I_J(A):=JA. $$
The associated K\"ahler form is
$$\Omega^{\J}_J(A,B):= G^{\J}_J(JA,B)=\int_M \tr(JAB) \dvol.$$

The Hamiltonian diffeomorphisms group acts symplectically on $\J(M,\omega)$ by 
$$\varphi.J:=\varphi_*\circ J \circ \varphi^{-1}_* \textrm{ for all } \varphi\in \ham(M,\omega).$$ 
Donaldson \cite{Donald} and Fujiki \cite{Fu} showed that this action admits a moment map given by the Hermitian scalar curvature. We use this moment map picture in the final section so we briefly recall its construction.

The fundamental vector fields for this action are given by
$$(X_F)^{*\J}:=\Lr_{X_F}J,\textrm{ for } F\in C^{\infty}_0(M).$$
Given a compatible $J\in \J(M,\omega)$, consider the Chern connection $\nabla$ on $(TM,J)$. It induces a complex connection $\nabla^{\Lambda^n}$ on $\Lambda^n (TM,J)$ (recall that $n=\frac{\dim M}{2}$). Denote by $R^{(n)}$ the curvature of $\nabla^{\Lambda^n}$. The Hermitian scalar curvature is the function $Scal(J)$ (depending on $J$) defined by
$$Scal(J) \dvol:= 2. R^{(n)}\wedge \dsubvol. $$
The Hermitian scalar curvature is a moment map for the action of $\ham(M,\omega)$ on $\J(M,\omega)$, \cite{Donald} and \cite{Fu}. That is, if $A:=\frac{d}{dt}|_0 J_t \in T_J\J(M,\omega)$, then for all $F\in C_0^{\infty}(M)$ we have :
\begin{equation} \label{eq:scalmoment} 
\Omega^{\J}_J((X_F)^{*\J},A)=\frac{d}{dt}|_0 \int_M F Scal(J_t)\dvol 
\end{equation}

\begin{defi}
We denote by $\J_{int}(M,\omega)$ the subspace of integrable complex structures in $\J(M,\omega)$.
\end{defi}

\noindent We describe the tangent space of $\J_{int}(M,\omega)$.

\begin{prop} \label{prop:nijenderiv}
Let $J\in \J_{int}(M,\omega)$, if $A \in T_J \J(M,\omega)$ is tangent to $\J_{int}(M,\omega)$ then the $2$-tensor
$$J(\nabla A (X)Y)-(\nabla A)(JX)Y$$
is symmetric in $X, Y$.
\end{prop}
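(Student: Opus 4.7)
The plan is to linearize the integrability condition $N_J = 0$, where $N_J$ is the Nijenhuis tensor, along a path $J_t \in \J_{int}(M,\omega)$ with $J_0 = J$ and $\frac{d}{dt}|_0 J_t = A$. Since the Nijenhuis tensor is intrinsic, I may compute it using any torsion-free connection, and the natural choice here is the Levi-Civita connection $\nabla$ of the K\"ahler metric $g_J(\cdot,\cdot) = \omega(\cdot, J\cdot)$, which satisfies $\nabla J = 0$ at $t = 0$.

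First, I would expand the brackets in
$$N_{J_t}(X,Y) = [J_tX, J_tY] - J_t[J_tX, Y] - J_t[X, J_tY] - [X,Y]$$
using $[U,V] = \nabla_U V - \nabla_V U$ for the fixed torsion-free $\nabla$. The $[X,Y]$ term is independent of $t$, and all other terms are polynomial in $J_t$ and first derivatives of $J_t Y$, $J_t X$. Then I would differentiate $N_{J_t}(X,Y) = 0$ at $t = 0$, producing a sum of six terms involving $\nabla(AX), \nabla(AY), \nabla_{AX}, \nabla_{AY}$ together with terms of the form $A \circ \nabla$ and $J \circ \nabla A$.

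The key simplification will be that at $t=0$ we have $\nabla J = 0$, so any occurrence of $\nabla_U(JV)$ collapses to $J\nabla_U V$, and composition of $A$ with $J$ picks up a sign via $AJ = -JA$. Several terms then cancel pairwise (the $\nabla_{AX}(JY)$ and $J\nabla_{AX}Y$ terms, as well as their $X \leftrightarrow Y$ counterparts). Assembling the remaining terms and using the identity $(\nabla_U A)V = \nabla_U(AV) - A\nabla_U V$, the linearized condition reduces to
$$(\nabla_{JX}A)Y - J(\nabla_X A)Y \;=\; (\nabla_{JY}A)X - J(\nabla_Y A)X,$$
which is exactly the claimed symmetry in $X,Y$.

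The main obstacle is purely bookkeeping: carefully tracking the twelve derivative terms produced by the product rule and ensuring all the $\nabla J = 0$ simplifications and $AJ + JA = 0$ sign flips land correctly. There is no deep geometric difficulty, just the need to keep the signs and pairings straight until the final cancellation reveals the symmetric expression.
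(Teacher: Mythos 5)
Your proposal is correct and follows essentially the same route as the paper: differentiate the condition $N_{J_t}=0$ along a path in $\J_{int}(M,\omega)$ at $t=0$, using a torsion-free connection with $\nabla J=0$ to organize the computation. The identity you arrive at is the negative of the one displayed in the paper's proof, which is an equivalent statement of the symmetry.
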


\begin{proof}
Consider $J_t\in  \J_{int}(M,\omega)$ such that $\frac{d}{dt}|_0J_t=A$. 
Because each $J_t$ is integrable, the Nijenhuis tensor $N_{J_t}=0$. Differentiating this equation
at $t=0$ gives :
$$J(\nabla A (X)Y)-(\nabla A)(JX)Y=J(\nabla A (Y)X)-(\nabla A)(JY)X.$$
Then, the vector fields valued $2$-tensor  $J\nabla A(X)Y-(\nabla A)(JX)Y$ is symmetric.
\end{proof}

%%%%%%%%%%%%%%%%%%%%%%%%%%%%%%%%%%%%%%%%%%%%%%%%%%%%%%%%%%%%%%%%%%%%%%%%%%%%%%%%%%%%%%%%%%%%%%%%%%%%%%%%

\subsection{The map $\LC:\J_{int}(M,\omega) \rightarrow \E(M,\omega)$} \label{subsect:lcmap}

%%%%%%%%%%%%%%%%%%%%%%%%%%%%%%%%%%%%%%%%%%%%%%%%%%%%%%%%%%%%%%%%%%%%%%%%%%%%%%%%%%%%%%%%%%%%%%%%%%%%%%%%

We define the {\bf Levi-Civita map} to be the map 
$$\LC:\J_{int}(M,\omega) \rightarrow \E(M,\omega): J \mapsto \nabla^{J}$$
which associates to an integrable complex structure $J$ compatible with $\omega$, the Levi-Civita 
connection $\nabla^{J}$ of the K\"ahler metric $g(.,.):= \omega(.,J.)$. 

\begin{lemme} \label{lemme:LCequiv}
The map $\LC$ is equivariant with respect to the group of symplectic diffeomorphisms of $(M,\omega)$.
That is : 
$$\LC(\varphi.J)=\varphi.\LC(J)$$
for all $\varphi \in \symp(M,\omega)$ and $J\in \J_{int}(M,\omega)$.
\end{lemme}

\begin{proof}
It is a straightforward computation that $\nabla$ is the Levi-Civita connection of the metric
$g_J(\cdot,\cdot):= \omega(\cdot,J\cdot)$ if and only if $\varphi.\nabla$ is the Levi-Civita connection
of the metric $g_{\varphi.J}(\cdot,\cdot):= \omega(\cdot,\varphi.J\cdot)$.
\end{proof}

In the sequel of the paper, we will be concerned by K\"ahler manifolds satisfying a certain non-degeneracy condition.

\begin{defi} \label{def:condC}
A K\"ahler manifold $(M,\omega,J)$ is said to satisfy {\bf Condition C} if 
for $F\in  C^{\infty}_0(M)$ : 
$$ (\LC^{*} \Omega^{\E})_J\left(\Lr_{X_H}J,J\Lr_{X_F}J\right)=0, \forall H \in  C^{\infty}_0(M) \Rightarrow \Lr_{X_F}J=0. $$
\end{defi}

\begin{rem} \label{rem:LJLN}
The condition $\Lr_{X_F}J=0$ is equivalent to $\Lr_{X_F}\nabla=0$ (i.e. $\LC_{*J}(\Lr_{X_F}J)=0$).
\end{rem}

We now exhibit a geometric condition in term of the Ricci tensor of the K\"ahler manifold which implies the above Condition C.
For this, we study carefully the differential  of the Levi-Civita map : $\LC_{*J}:T_J \J_{int}(M,\omega) \rightarrow T_{\LC(J)}\E(M,\omega)$.

\begin{prop} \label{prop:LC*}
Let $A\in T_J \J_{int}(M,\omega)$ and write $B\in T_{\nabla}\E(M,\omega)$ such that $B=\LC_{*J}(A)$.

Then :  
\begin{enumerate}
\item $B$ is the unique solution to the equation
\begin{equation*} \label{eq:BJlin}
B(X)Y+JB(X)JY=- \nabla J A(X)Y.
\end{equation*}
\item if $JA \in T_J \J_{int}(M,\omega)$, then :
$$\LC_{*J}(JA)(X)Y=-J^{\E}B(X)Y +\frac{1}{2}\left(J(\nabla A) (JX)Y)+(\nabla A)(X)Y\right).$$
\end{enumerate}
\end{prop}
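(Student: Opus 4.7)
The plan is to differentiate, along a curve $J_t \in \J_{int}(M,\omega)$ with $J_0 = J$ and $\dot J_0 = A$, the defining Kähler identity $\nabla^{J_t} J_t = 0$. Setting $B := \frac{d}{dt}\big|_0 \nabla^{J_t}$, this derivative gives
$$B(X)(JY) - J\bigl(B(X) Y\bigr) + (\nabla_X A)(Y) = 0.$$
Substituting $Y \mapsto JY$ and invoking the Kähler flatness $\nabla J = 0$ (so that $J(\nabla_X A)(Y) = (\nabla_X(JA))(Y)$) rearranges this into the claimed relation $B(X)Y + J B(X)JY = -(\nabla(JA))(X) Y$.

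For uniqueness of $B$, suppose $C \in T_\nabla \E(M,\omega)$ satisfies $C(X)Y + JC(X)JY = 0$; equivalently, each endomorphism $C(X)$ commutes with $J$. Setting $s(X,Y,Z) := \omega(C(X)Y, Z)$, the condition $C \in T_\nabla \E(M,\omega)$ forces $s$ to be totally symmetric in $X,Y,Z$, while $J$-commutation combined with the $\omega$-skewness of $C(X)$ gives
$$s(X, JY, Z) + s(X, Y, JZ) = 0.$$
Replacing $Y \mapsto JY$ produces the pair-invariance $s(X, JY, JZ) = s(X, Y, Z)$, which, by total symmetry, propagates to any pair of slots; in particular $s(JX, JY, Z) = s(X, Y, Z)$. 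On the other hand, applying the displayed relation with $X \mapsto JX$ and using pair-invariance now in the $(X,Z)$ slots yields $s(JX, JY, Z) = -s(X, Y, Z)$. Combining these forces $s \equiv 0$, hence $C = 0$.

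For Part 2, the hypothesis $JA \in T_J \J_{int}(M,\omega)$ permits applying Part 1 with $A$ replaced by $JA$. Since $J(JA) = -A$, the differential $\LC_{*J}(JA)$ is characterized as the unique element of $T_\nabla \E(M,\omega)$ satisfying
$$\LC_{*J}(JA)(X)Y + J\,\LC_{*J}(JA)(X)\,JY = (\nabla A)(X) Y.$$
Unfolding $-J^{\E} B(X) Y = JB(JX)JY$, I set
$$D(X)Y := JB(JX)JY + \tfrac{1}{2}\bigl(J(\nabla A)(JX) Y + (\nabla A)(X) Y\bigr)$$
and verify directly that $D$ satisfies the above equation. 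The two ingredients in the verification are the Part 1 equation for $B$ applied at the argument $JX$, and the auxiliary identity $(\nabla A)(X)(JY) = -J(\nabla A)(X) Y$, itself a consequence of $AJ + JA = 0$ together with $\nabla J = 0$. To invoke uniqueness one also checks $D \in T_\nabla \E(M,\omega)$: the symmetry $D(X)Y = D(Y)X$ uses Proposition \ref{prop:nijenderiv} (integrability of $J$ along the curve), while the $\omega$-skewness of $D(X)$ rests on the same identity relating $J$ and $\nabla A$.

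The main obstacle is the uniqueness step in Part 1, where a careful permutation argument is required to combine the single linear $J$-commutation relation with the full $S_3$-symmetry of $s$ in order to extract two incompatible sign relations for $s(JX, JY, Z)$.
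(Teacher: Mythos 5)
Your proof is correct and follows the same overall strategy as the paper's: differentiate $\nabla_t J_t=0$ along a curve in $\J_{int}(M,\omega)$ to obtain the linearized equation, prove uniqueness of its solution in $T_\nabla\E(M,\omega)$, and then verify that the candidate formula satisfies the characterizing equation for $\LC_{*J}(JA)$ (with membership in $T_\nabla\E(M,\omega)$ checked via Proposition \ref{prop:nijenderiv}). The one place you genuinely diverge is the uniqueness step. The paper's argument is conceptual: if $C$ is the difference of two solutions, then $\nabla+C$ is a torsion-free connection preserving both $\omega$ and $J$, hence preserving $g=\omega(\cdot,J\cdot)$, so it coincides with $\nabla$ by uniqueness of the Levi-Civita connection. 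You instead work directly with the totally symmetric $3$-tensor $s(X,Y,Z)=\omega(C(X)Y,Z)$ and play the relation $s(X,JY,Z)=-s(X,Y,JZ)$ against the full $S_3$-symmetry to extract the two incompatible sign relations $s(JX,JY,Z)=\pm s(X,Y,Z)$, forcing $s\equiv 0$. Your computation is valid; it buys independence from the uniqueness theorem for metric connections at the cost of the sign-chasing you flag, whereas the paper's route is shorter and explains \emph{why} uniqueness holds (the linearization is injective because Levi-Civita connections are unique). Either is acceptable.
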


\begin{proof}
Consider $J_t$ in $\J_{int}(M,\omega)$ with $\frac{d}{dt}|_0J_t=A$.
Set $\nabla_t:= \LC(J_t)$, we want to compute $B:=\frac{d}{dt}|_0 \LC(J_t)$.
By definition, $\nabla_t J_t=0$. Differentiating this relation at $t=0$ gives :
\begin{equation} \label{eq:LC*}
0 =  \frac{d}{dt}|_0 (\nabla_t J_t)(X)Y  =  B(X)JY-J(B(X)Y)+(\nabla A)(X)Y.
\end{equation}
Suppose that there are two solutions $B$ and $B'$ of (\ref{eq:LC*}). The difference $C:=B-B'$ satisfies $C(X)JY-J(C(X)Y)=0$.
It means that the connection $\nabla+C$ preserves the complex structure $J$. Moreover, $\nabla+C$ is symplectic.
So that, $\nabla$ and $\nabla+C$ are torsion free connections preserving $g(\cdot,\cdot)=\omega(\cdot,J\cdot)$.
It implies $C=0$. 

For the second statement, set 
$$\tilde{B}(X)Y:= -J^{\E}B(X)Y +\frac{1}{2}\left(J(\nabla A) (JX)Y)+(\nabla A)(X)Y\right).$$
By Proposition \ref{prop:nijenderiv}, $\tilde{B}(X)Y$ is symmetric in $X$ and $Y$. One checks by direct computations that $\tilde{B}(X)$ sits in $End(TM,\omega)$. Moreover,
$$J\tilde{B}(X)Y-\tilde{B}(X)JY=\nabla(JA)(X)Y.$$
It implies $\tilde{B}=\LC_{*J}(JA)$.
\end{proof}

\begin{rem}
The formula of point 2 in the above Proposition \ref{prop:LC*} gives the defect for $\LC_{*J}$ to be complex anti-linear for the complex structures $I_J$ on $T_J\J_{int}(M,\omega)$ and $J^{\E}$ on $T_{\LC(J)}\E(M,\omega)$.
\end{rem}

\begin{prop} \label{prop:omeganegative}
Let $(M,\omega,J)$ be a K\"ahler manifold. If the Ricci tensor $Ric^{\nabla}$ is everywhere non negative (i.e. $\forall x\in M : Ric^{\nabla}(X_x,X_x)\geq 0\  \forall X_x \in T_xM$), then $(M,\omega,J)$ satisfies the Condition C.
\end{prop}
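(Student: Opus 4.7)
The plan is to exploit the $\symp(M,\omega)$-equivariance of $\LC$ (Lemma~\ref{lemme:LCequiv}) together with the moment map property of $\tilde\mu$ (Theorem~\ref{theor:momentE}) to reduce Condition~C to a single diagonal identity, and then to handle that identity by a Bochner-type argument. Equivariance gives $\LC_{*J}(\Lr_{X_H}J)=\Lr_{X_H}\nabla$ for every $H\in C^\infty_0(M)$, and setting $A:=\LC_{*J}(J\Lr_{X_F}J)\in T_{\nabla}\E(M,\omega)$, the defining equation (\ref{eq:momentmu}) of $\tilde\mu$ yields
$$(\LC^{*}\Omega^{\E})_J(\Lr_{X_H}J,J\Lr_{X_F}J)=\Omega^{\E}_{\nabla}(\Lr_{X_H}\nabla,A)=\int_M H\cdot d\tilde\mu_{\nabla}(A)\,\dvol.$$
Thus the hypothesis of Condition~C amounts to $d\tilde\mu_{\nabla}(A)=0$ in $C^\infty_0(M)$. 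In particular, taking $H=F$ gives the single diagonal equation $(\LC^{*}\Omega^{\E})_J(\Lr_{X_F}J,J\Lr_{X_F}J)=0$, which alone turns out to be enough.

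Next, applying Proposition~\ref{prop:LC*}(2) with $A=\Lr_{X_F}J$ (noting that $JA\in T_J\J_{int}(M,\omega)$ since $\J_{int}$ is an $I_J$-complex submanifold of $\J(M,\omega)$) and using $\Omega^{\E}(B,J^{\E}C)=G^{J^{\E}}(B,C)$, this diagonal vanishing rewrites as
$$\|\Lr_{X_F}\nabla\|^2_{G^{J^{\E}}}\;=\;\Omega^{\E}_{\nabla}\bigl(\Lr_{X_F}\nabla,\,\kappa(\Lr_{X_F}J)\bigr),\qquad (\star)$$
where $\kappa(A)(X)Y:=\tfrac{1}{2}\bigl(J(\nabla A)(JX)Y+(\nabla A)(X)Y\bigr)$ is the correction term measuring the defect of $\LC_{*J}$ to be complex anti-linear. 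The left-hand side of $(\star)$ is non-negative and vanishes if and only if $\Lr_{X_F}\nabla=0$, equivalently (Remark~\ref{rem:LJLN}) if and only if $\Lr_{X_F}J=0$. It therefore suffices to show that the right-hand side of $(\star)$ is non-positive whenever $Ric^{\nabla}\geq 0$.

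The final step is a Bochner computation. Writing $X_F=-J\,\grad F$ and using the standard formula $\Lr_{X_F}\nabla(Y)Z=\nabla^2_{Y,Z}X_F+R^{\nabla}(X_F,Y)Z$, both factors on the right-hand side of $(\star)$ expand into tensorial expressions involving at most third covariant derivatives of $F$. Integrating by parts on the closed manifold $M$, commuting covariant derivatives to produce curvature terms (whose appropriate traces yield $Ric^{\nabla}$), and invoking the K\"ahler identities $\nabla J=0$ and $R^{\nabla}(JX,JY)=R^{\nabla}(X,Y)$, I expect the right-hand side of $(\star)$ to simplify to an identity of the form
$$\Omega^{\E}_{\nabla}\bigl(\Lr_{X_F}\nabla,\,\kappa(\Lr_{X_F}J)\bigr)=-c_1\,\|\Lr_{X_F}J\|^2-c_2\int_M Ric^{\nabla}(\grad F,\grad F)\,\dvol,$$
with constants $c_1,c_2\geq 0$. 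Under the hypothesis $Ric^{\nabla}\geq 0$ both terms are non-positive, so $(\star)$ forces $\|\Lr_{X_F}\nabla\|^2=0$, hence $\Lr_{X_F}J=0$ by Remark~\ref{rem:LJLN}.

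The main obstacle is precisely this Bochner identity: tracking the $J$-type decompositions through third-order covariant derivatives of $F$, performing several integrations by parts, and checking that the Ricci tensor emerges with both a definite sign and a coefficient sufficient to dominate the remainder. The manipulations are algorithmic (Bianchi identities, K\"ahler symmetries, and the structure of $\End(TM,\omega)$) but technical, and it is exactly the pointwise non-negativity of $Ric^{\nabla}$ that guarantees no residual term of the wrong sign survives.
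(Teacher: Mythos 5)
Your reduction to the diagonal identity (taking $H=F$) and the reformulation $(\star)$ are both correct and are essentially the paper's opening moves: the paper proves $(\LC^{*}\Omega^{\E})_J(A,JA)=\Omega^{\E}_{\nabla}(B,\tilde B)\le 0$ with equality only if $\LC_{*J}(A)=0$, for $B=\LC_{*J}(A)$, $\tilde B=\LC_{*J}(JA)$, and then specializes to $A=\Lr_{X_F}J$. The genuine gap is the step you defer to a ``Bochner computation'': the identity you expect is not proved, and it is false in the form you state. Writing $B=B^{aJ}+B^{J}$ with $J$-anti-linear part $B^{aJ}(X)Y=-\tfrac12\nabla(JA)(X)Y$ (Proposition \ref{prop:LC*} and equations (\ref{eq:B})--(\ref{eq:tildeB})), and using $\kappa(A)=\tilde B+J^{\E}B$, one gets
$$\Omega^{\E}_{\nabla}\bigl(B,\kappa(A)\bigr)=\Omega^{\E}_{\nabla}(B,\tilde B)+G^{J^{\E}}_{\nabla}(B,B)=-\int_M g^{kt}\,Ric^{\nabla}(Ae_k,Ae_t)\,\dvol+\tfrac14\bigl\|\nabla(JA)\bigr\|^2_{G^{J^{\E}}}.$$
The second term is non-negative, so the right-hand side of $(\star)$ is \emph{not} non-positive in general; moreover the curvature term (which comes from a single integration by parts and a Ricci identity applied to $\int_M\Lambda^{kl}\tr((\nabla JA)(e_k)(\nabla A)(e_l))\dvol$, not from a Bochner formula for $\grad F$) is contracted against $A=\Lr_{X_F}J$, a quantity of the order of the Hessian of $F$, and not against $\grad F$. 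Since the positive term is exactly $\|B^{aJ}\|^2_{G^{J^{\E}}}$, it cancels inside $\|B\|^2_{G^{J^{\E}}}=\|B^{J}\|^2+\|B^{aJ}\|^2$, and all that $(\star)$ actually yields is
$$\|B^{J}\|^2_{G^{J^{\E}}}=-\int_M g^{kt}\,Ric^{\nabla}(Ae_k,Ae_t)\,\dvol,$$
which under $Ric^{\nabla}\ge 0$ forces $B^{J}=0$ but says nothing about the anti-linear part $-\tfrac12\nabla(JA)$. Your conclusion ``$(\star)$ forces $\|\Lr_{X_F}\nabla\|^2=0$'' therefore does not follow.

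Closing that remaining gap is where the integrability of $J$ enters, and your proposal has no substitute for it. In the paper, $B^{J}=0$ (hence $\tilde B^{J}=0$) combined with the total symmetry of $\omega(\tilde B(\cdot)\cdot,\cdot)$ forces $(\nabla A)(X)Y$ to be symmetric in $X,Y$; this symmetry allows the surviving term $\tfrac14\int_M\Lambda^{kl}\tr((\nabla JA)(e_k)(\nabla A)(e_l))\dvol$ to be rewritten as $\tfrac14\int_M g^{kl}\tr((\nabla A)(e_k)(\nabla A)(e_l))\dvol$, which is positive definite on $J$-anti-linear endomorphisms; its vanishing gives $\nabla A=0$, hence $B=0$ and then $\Lr_{X_F}J=0$ by equivariance and Remark \ref{rem:LJLN}. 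Without this second step (or an equivalent one) the argument stops at $B^{J}=0$ and does not establish Condition C; and the hoped-for sign structure ``$-c_1\|\Lr_{X_F}J\|^2-c_2\int_M Ric^{\nabla}(\grad F,\grad F)\dvol$ with $c_1,c_2\ge 0$'' is precisely the residual term of the wrong sign that does survive.
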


\begin{proof}
Consider $A$ and $JA \in T_J \J_{int}(M,\omega)$. We show that $\Omega^{\E}_{\LC(J)}(\LC_{*J}(A),\LC_{*J}(JA))\leq0$ with equality if and only if $\LC_{*J}(A)=0$.

Write $B=\LC_{*J}(A)$ and $\tilde{B}:=\LC_{*J}(JA)$. By Proposition \ref{prop:LC*}, we know the $J$-anti-linear part of $B$ and $\tilde{B}$. We write 
\begin{eqnarray} 
B(X)Y & = & \frac{1}{2}\left(-\nabla(JA)(X)Y\right)+B(X)^JY, \label{eq:B} \\
\tilde{B}(X)Y & = & \frac{1}{2} \left((\nabla A)(X)Y\right)+\tilde{B}(X)^JY. \label{eq:tildeB}
\end{eqnarray}
where $B(X)^J$ and $\tilde{B}(X)^J$ denotes the $J$-linear part of the fields of endomorphisms $B(X)$ and $\tilde{B}(X)$.
Again by Proposition \ref{prop:LC*}, one checks that $\tilde{B}(X)^J=-B(JX)^J$. We can now compute 
\begin{eqnarray*}
(\LC^*\Omega^{\E})_J(A,JA) & = & \Omega^{\E}_{\LC(J)}(B,\tilde{B}), \\
 & = & \frac{1}{4} \int_M \Lambda^{kl} \tr((\nabla JA)(e_k)(\nabla A(e_l))) \dvol + \int_M \Lambda^{kl} \tr(B(e_k)^JB(Je_l)^J) \dvol, \\
 & = & \frac{1}{4} \int_M \Lambda^{kl} \tr((\nabla JA)(e_k)(\nabla A)(e_l)) \dvol + \int_M g^{kl} \tr(B(e_k)^JB(e_l)^J) \dvol.
\end{eqnarray*}
The second term of the last equality is negative definite because the trace induces a negative definite bilinear form on $J$-linear endomorphisms of $(TM,\omega)$. It remains to deal with the first term.
\begin{eqnarray*}
\int_M \Lambda^{kl} \tr((\nabla JA)(e_k)(\nabla A(e_l)))\dvol & = & \int_M  \tr(JA\Lambda^{lk}\imath(e_l)\nabla_{e_k}(\nabla A))\dvol, \\
& = & \frac{1}{2} \int_M  \tr(JA\Lambda^{lk}R(e_k,e_l)( A))\dvol, \\
& = &   \int_M  \tr(JA\rho A)\dvol- \int_M  \tr(JA A\rho)\dvol \\
& = & 2\int_M  \tr(JA\rho A)\dvol,
\end{eqnarray*}
where $\rho\in \End(TM,\omega)$ denotes the Ricci endomorphism defined by $\omega(\rho X,Y):=Ric(X,Y)$.
One has
$$\tr(JA\rho A)=\Lambda^{lk}\omega(JA\rho Ae_k,e_l)=\Lambda^{lk}\omega(JAe_l,\rho Ae_k)=-2g^{kt}Ric(Ae_k,Ae_t)\leq 0.$$
So that $\Omega^{\E}(\LC_{*J}(A),\LC_{*J}(JA))\leq 0$. Moreover, if $\Omega^{\E}(\LC_{*J}(A),\LC_{*J}(JA))=0$, then $B(X)^J=0$ and also $\tilde{B}(X)^J=0$ for all $X$. But, this implies the tensor 
$(\nabla A)(X)Y$ to be symmetric in $X,Y$. Hence, we have
\begin{equation*}
0=(\LC^*\Omega^{\E})_J(A,JA)= \frac{1}{4} \int_M g^{kl} \tr((\nabla A)(e_k)(\nabla A(e_l))).
\end{equation*}
Because the trace is positive definite on $J$-anti-linear endomorphisms, it implies $\nabla A=0$ and thus $\LC_{*J}(A)=0$.

Now, we have to conclude that $(M,\omega,J)$ satisfies the Condition C. We consider $A=\Lr_{X_F}J$ for $F\in C^{\infty}_0(M)$ such that $\Omega^{\E}_{\LC(J)}(\LC_{*J}(\Lr_{X_H}J),\LC_{*J}(J\Lr_{X_F}J))=0$ for all $H\in C^{\infty}_0(M)$. Taking $H=F$, we showed above that necessarily $\LC_{*J}(\Lr_{X_F}J)=0$.
By the equivariance of the map $\LC$, we have $\Lr_{X_F}\nabla =0$. Then, by remark \ref{rem:LJLN}, it means $\Lr_{X_F}J=0$.
\end{proof}

\begin{lemme} \label{lemme:Iinv}
The $2$-form $\LC^*\Omega^{\E}$ is $I$-invariant, $I$ being the complex structure on $\J_{int}(M,\omega)$. 
That is if $A, A'$ and $JA, JA'\in T_J\J_{int}(M,\omega)$ then
$$(lc^*\Omega^{\E})_J(JA,JA')=(lc^*\Omega^{\E})_J(A,A').$$
\end{lemme}

\begin{proof}
From the equations (\ref{eq:B}) and (\ref{eq:tildeB}), one checks directly that $\LC^*\Omega^{\E}$ is $I$-invariant.
\end{proof}

\begin{rem}
By the Lemma \ref{lemme:Iinv}, the Condition C is equivalent to :
for $F\in  C^{\infty}_0(M)$ : 
$$ (\LC^{*} \Omega^{\E})_J\left(\Lr_{X_F}J,J\Lr_{X_H}J\right)=0, \forall H \in  C^{\infty}_0(M) \Rightarrow \Lr_{X_F}J=0. $$
\end{rem}

\begin{rem} 
The Proposition \ref{prop:omeganegative} (and its proof) and the Lemma \ref{lemme:Iinv} means that :
if $(M,\omega,J)$ has Ricci tensor positive definite at all points of $M$, then the $2$-form $(\LC^*\Omega^{\E})_J$ restricted on the subspace 
$$\mathcal{T}:=\{\Lr_{X_F}J+J\Lr_{X_H}J \ | \ F, H \in C^{\infty}_0(M)\} \leq T_J\J_{int}(M,\omega)$$
is a symplectic form and $I_J$ is a compatible negative complex structure with $(\LC^*\Omega^{\E})_J$.
\end{rem}

\noindent The Condition C is stable by complex transformation.

\begin{prop} \label{prop:condCcplx} 
Assume the closed K\"ahler manifold $(M,\omega,J)$ satisfies the Condition C. Let $f$ be a diffeomorphism of $M$ preserving $J$ (i.e. $f_*\circ J=J\circ f_*$), then the K\"ahler manifold $(M,f^*\omega,J)$ also satisfies the Condition C.
\end{prop}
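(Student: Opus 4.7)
The strategy is simply to transport Condition C between the K\"ahler manifolds $(M,\omega,J)$ and $(M,\omega',J)$ (writing $\omega':=f^*\omega$) via the K\"ahler isomorphism $f:(M,\omega',J)\to (M,\omega,J)$. Every ingredient entering Condition C is natural under pullback by $f$, so the argument should reduce to a systematic dictionary translation. I decorate with a prime every object associated to $(M,\omega',J)$ rather than $(M,\omega,J)$.

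First I would observe that $F\in C^\infty_0(M,\omega')$ iff $\tilde F:=F\circ f^{-1}\in C^\infty_0(M,\omega)$, since $f$ identifies the two Liouville volume forms. Applying $(f^{-1})^{*}$ to the defining equation $i(X_{\tilde F})\omega=d\tilde F$ yields the relation $X'_F=(f^{-1})_{*}X_{\tilde F}$. Because $f^{-1}$ preserves $J$, the identity $\Lr_{\phi_{*}Y}(\phi_{*}T)=\phi_{*}\Lr_{Y}T$ applied with $\phi=f^{-1}$ and $T=J$ gives
\begin{equation*}
\Lr_{X'_F}J=(f^{-1})_{*}(\Lr_{X_{\tilde F}}J),\qquad J\Lr_{X'_F}J=(f^{-1})_{*}(J\Lr_{X_{\tilde F}}J).
\end{equation*}

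Next I would verify that the map $\nabla\mapsto f^{-1}.\nabla$ (in the notation of (\ref{eq:action})) is a symplectomorphism $(\E(M,\omega),\Omega^{\E})\to(\E(M,\omega'),\Omega^{\E'})$; this is immediate from the tensorial integral expression for $\Omega^{\E}$ together with $f^{*}\omega=\omega'$. By Lemma \ref{lemme:LCequiv} together with $f^{*}J=J$, this map intertwines $\LC$ and $\LC'$ at $J$, and its differential at $J$ intertwines the infinitesimal actions on $\J_{int}$ and on $\E$. Assembling these identifications yields the key equality
\begin{equation*}
(\LC'^{*}\Omega^{\E'})_J(\Lr_{X'_H}J,\,J\Lr_{X'_F}J)=(\LC^{*}\Omega^{\E})_J(\Lr_{X_{\tilde H}}J,\,J\Lr_{X_{\tilde F}}J)
\end{equation*}
for all $F,H\in C^\infty_0(M,\omega')$.

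Finally, if the left-hand side vanishes for every $H$, the right-hand side vanishes for every $\tilde H\in C^\infty_0(M,\omega)$ (the correspondence $H\leftrightarrow \tilde H$ being a bijection); Condition C for $(M,\omega,J)$ then forces $\Lr_{X_{\tilde F}}J=0$, and pushforward by $(f^{-1})_{*}$ gives $\Lr_{X'_F}J=0$, i.e.\ Condition C for $(M,\omega',J)$. The only real difficulty is bookkeeping: one must keep careful track of which pullback acts on which object (function, vector field, $(1,1)$-tensor, connection) and verify the naturality of $\Omega^{\E}$ under symplectomorphisms, which is built into its definition as an integral of contractions with the Liouville volume form.
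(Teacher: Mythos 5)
Your proof is correct and follows essentially the same route as the paper's: transport all the data (normalized functions, Hamiltonian vector fields, the tensors $\Lr_{X_F}J$, the symplectic form $\Omega^{\E}$, and the Levi-Civita map) through the action of $f^{-1}$, establish the key equality of the two pulled-back forms, and conclude. The only cosmetic difference is that you invoke Lemma \ref{lemme:LCequiv} where the paper re-derives the naturality of the Levi-Civita map directly for the pair $(\omega,f^{*}\omega)$; the underlying computation is identical.
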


\begin{proof}
Let $\tilde{J} \in \J_{int}(M,\omega)$, define $f^{-1}.\tilde{J} :=f^{-1}_*\circ \tilde{J} \circ f_*$. Then, $f^{-1}.\tilde{J}\in \J_{int}(M,f^*\omega)$. Because it is clearly invertible, $f^{-1}$ induces a bijection $f^{-1}.:\J_{int}(M,\omega)\rightarrow \J_{int}(M,f^*\omega)$. Its differential acts on $A\in T_J\J_{int}(M,\omega)$ by $(f^{-1}.)_{*J}(A)=f^{-1}_*\circ A \circ f_*$.
So that, using $f^{-1}.J=J$ we have for $F\in C^{\infty}(M)$ : 
$$(f^{-1}.)_{*J}(\Lr_{X^{\omega}_F}J)=\Lr_{f^{-1}_*X^{\omega}_F}J=\Lr_{X^{f^*\omega}_{f^*F}}J,$$
where we keep track of the symplectic form with respect to the Hamiltonian vector fields are taken from.

In the same vein, $f^{-1}$ induces a symplectomorphism 
$$f^{-1}. : (\E(M,\omega),\Omega^{\E(M,\omega)}) \rightarrow (\E(M,f^*\omega),\Omega^{\E(M,f^*\omega)}),$$
defined by $(f^{-1}.\nabla)_X Y := f^{-1}_*(\nabla_{f_*X} f_*Y)$. Moreover, for $\tilde{J}$ in $\J_{int}(M,\omega)$,
one checks that $f^*\omega(\cdot,f^{-1}.\tilde{J}\cdot)=f^*g^{\tilde{J}}(\cdot,\cdot)$. So that, $f^{-1}.$ commutes with the Levi-Civita maps :
$$f^{-1}.\circ \LC = \LC' \circ f^{-1}.$$
where $\LC' : \J_{int}(M,f^*\omega)\rightarrow \E(M,f^*\omega)$ is the Levi-Civita map.

All this proves that 
$$ (\LC'^{*} \Omega^{\E(M,f^*\omega)})_J\left(\Lr_{X^{f^*\omega}_{f^*F}}J,J\Lr_{X^{f^*\omega}_{f^*H}}J\right)=(\LC^{*} \Omega^{\E(M,\omega)})_J\left(\Lr_{X_F}J,J\Lr_{X_H}J\right).$$
Then $(M,f^*\omega,J)$ also satisfies the Condition C, as desired.
\end{proof}

%%%%%%%%%%%%%%%%%%%%%%%%%%%%%%%%%%%%%%%%%%%%%%%%%%%%%%%%%%%%%%%%%%%%%%%%%%%%%%%%%%%%%%%%%%%%%%%%%%%%%%%%
%%%%%%%%%%%%%%%%%%%%%%%%%%%%%%%%%%%%%%%%%%%%%%%%%%%%%%%%%%%%%%%%%%%%%%%%%%%%%%%%%%%%%%%%%%%%%%%%%%%%%%%%

\section{A Calabi type functional on the space of K\"ahler metrics} \label{sect:cal}

%%%%%%%%%%%%%%%%%%%%%%%%%%%%%%%%%%%%%%%%%%%%%%%%%%%%%%%%%%%%%%%%%%%%%%%%%%%%%%%%%%%%%%%%%%%%%%%%%%%%%%%%

We study the zeroes of the moment map $\mu$ on a closed K\"ahler manifold $(M,\omega,J)$.
Instead of looking for zeroes that are symplectic connections with respect to the fixed symplectic form $\omega$,
we will consider Levi-Civita connections associated to the K\"ahler forms $\tilde{\omega}$ in the same K\"ahler class than $\omega$.

%%%%%%%%%%%%%%%%%%%%%%%%%%%%%%%%%%%%%%%%%%%%%%%%%%%%%%%%%%%%%%%%%%%%%%%%%%%%%%%%%%%%%%%%%%%%%%%%%%%%%%%%

\subsection{The norm squared moment map functional}

%%%%%%%%%%%%%%%%%%%%%%%%%%%%%%%%%%%%%%%%%%%%%%%%%%%%%%%%%%%%%%%%%%%%%%%%%%%%%%%%%%%%%%%%%%%%%%%%%%%%%%%%

We consider a closed K\"ahler manifold $(M,\omega,J)$. Let $\Theta$ be the K\"ahler class of $\omega$ and denote
by $\MOm$ the set of K\"ahler forms in the class $\Theta$. By the classical $dd^c$-lemma,
$$\MOm:= \{\omega_{\phi}=\omega+dd^c\phi \textrm{ s.t. } \phi\in C^{\infty}_0(M),\ \omega_{\phi}(\cdot,J\cdot) \textrm{ is positive definite }\},$$
where $d^cF:=-dF\circ J$. The condition $\omega_{\phi}(\cdot,J\cdot)$ being positive definite is open. Then, $\MOm$ is a Fr\'echet manifold modeled on $C^{\infty}_0(M)$.

\begin{defi}
We define the {\bf $\F$-functional} to be the map
$$\F:\MOm \rightarrow \R:\omega_\phi \mapsto \F(\omega_\phi):= \int_M (\mu^{\phi}(\nabla^{\phi}))^2\dvolphi,$$
where $\nabla^{\phi}$ is the Levi-Civita connection of the K\"ahler metric $g_{\phi}(\cdot,\cdot):=\omega_{\phi}(\cdot,J\cdot)$ and $\mu^{\phi}$ is the normalised moment map on the space $\E(M,\omega_{\phi})$ described in Section \ref{sect:symplconn}.
\end{defi}

This new point of view of varying the K\"ahler form in a fixed K\"ahler class is intimately related to the one of 
varying the symplectic connection of the fixed symplectic form $\omega$. This relation is the key of the remainder of the paper.

Consider a smooth one-parameter family $\phi:\ ]-\epsilon,\epsilon[\rightarrow C^{\infty}_0(M):t\mapsto \phi(t)$ for some $\epsilon\in \R^+_0$ such that the $2$-form $\omega_{\phi(t)}:= \omega+ dd^c\phi(t)$ is a smooth path inside $\MOm$. All the forms $\omega_{\phi(t)}$ are symplectomorphic to each other. Indeed, set $X_t:=-\grad^{\phi(t)}(\dot{\phi})$ the gradient vector field of $\dot{\phi}(t)$ with respect to $g_{\phi(t)}$ (that is $g_{\phi(t)}(\grad^{\phi(t)}(\dot{\phi}),\cdot)=d\dot{\phi}$). Then the one parameter family of diffeomorphisms $f_t$ integrating the time-dependent vector field $X_t$ satisfies
\begin{equation} \label{eq:Moser}
f_t^*\omega_{\phi(t)}=\omega.
\end{equation}

Consider $f_t$ as in the above equation (\ref{eq:Moser}). Then, the natural action of $f_t^{-1}$ on $J$ produces a path
$$J_t:=f_t^{-1}.J:= f_{t*}^{-1} J f_{t*} \in \J_{int}(M,\omega).$$
Define the associated K\"ahler metric $g_{J_t}(\cdot,\cdot):=\omega(\cdot,J_t\cdot)$ and denote by $\nabla^{J_t}$ its
Levi-Civita connection. Then, $\nabla^{J_t}$ and $\nabla^{\phi(t)}$ are related by the following formula : 

\begin{prop} \label{prop:nablaJphi}
With the above notations, we have that $\nabla^{J_t}\in \E(M,\omega)$ and
$$\nabla^{J_t}=f_t^{-1}.\nabla^{\phi(t)},$$
where $(f_t^{-1}.\nabla^{\phi(t)})_YZ = f_{t*}^{-1}\nabla^{\phi(t)}_{f_{t*}Y}f_{t*}Z$.
\end{prop}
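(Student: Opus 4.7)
The plan is to reduce the statement to the naturality of the Levi-Civita construction under pullback by diffeomorphisms, with the Moser identity (\ref{eq:Moser}) as the crucial bridge between the two K\"ahler structures.

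First I would establish that $f_t$ is an isometry $(M,g_{J_t})\to(M,g_{\phi(t)})$, i.e. $f_t^*g_{\phi(t)}=g_{J_t}$. Unwinding definitions, using $J_t=f_{t*}^{-1}Jf_{t*}$ and $f_t^*\omega_{\phi(t)}=\omega$:
\[
(f_t^*g_{\phi(t)})(Y,Z)=\omega_{\phi(t)}(f_{t*}Y,Jf_{t*}Z)=\omega(Y,f_{t*}^{-1}Jf_{t*}Z)=\omega(Y,J_tZ)=g_{J_t}(Y,Z).
\]

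Next I would invoke the general naturality principle: for any diffeomorphism $\varphi$ and any torsion-free connection $\nabla$ parallelizing a tensor $T$, the pulled-back connection $\varphi^{-1}.\nabla$ defined as in (\ref{eq:action}) is torsion-free and parallelizes $\varphi^*T$. This is exactly the observation underlying the proof of Lemma \ref{lemme:LCequiv}; here it has to be applied with $\varphi=f_t$, which is a diffeomorphism of $M$ but not a symplectomorphism of $(M,\omega)$. Nevertheless the argument is identical, since pullback of a connection commutes with pullback of tensor fields. Applied to $\nabla=\nabla^{\phi(t)}$ with $T=\omega_{\phi(t)}$ and then with $T=g_{\phi(t)}$, and using the Moser identity together with the isometry relation above, one gets that $f_t^{-1}.\nabla^{\phi(t)}$ is torsion-free and preserves both $\omega$ and $g_{J_t}$. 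The first fact yields $f_t^{-1}.\nabla^{\phi(t)}\in\E(M,\omega)$, and the second, combined with uniqueness of the Levi-Civita connection, forces $f_t^{-1}.\nabla^{\phi(t)}=\nabla^{J_t}$.

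The argument is essentially a bookkeeping exercise, and I do not anticipate a genuine mathematical obstacle: the only substantive input is (\ref{eq:Moser}), which is already built into the construction of $f_t$. The main thing to be careful about is the convention (\ref{eq:action}) for the dot-action, so that the pushforwards and pullbacks are applied on the correct sides throughout. If one prefers, the entire statement can be read as a single diagram: $f_t$ intertwines the triples $(\omega,J_t,g_{J_t})$ and $(\omega_{\phi(t)},J,g_{\phi(t)})$, hence also intertwines the associated Levi-Civita connections, which is precisely the claimed formula.
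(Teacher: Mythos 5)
Your proposal is correct and follows essentially the same route as the paper: the paper's proof also rests on the identity $g_{J_t}=f_t^*g_{\phi(t)}$ together with naturality of the Levi-Civita connection under diffeomorphisms, and obtains $\nabla^{J_t}\in\E(M,\omega)$ from the fact that $(M,\omega,J_t)$ is K\"ahler. You have merely written out explicitly the bookkeeping that the paper leaves implicit.
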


\begin{proof}
Because $(M,\omega,J_t)$ is K\"ahler, then $\nabla^{J_t}$ preserves $\omega$.
The equation $\nabla^{J_t}=f_t^{-1}.\nabla^{\phi(t)},$ follows from the observation that $g_{J_t}=f_t^*g_{\phi(t)}$.
\end{proof}

\begin{lemme} \label{lemme:main}
Let $\phi:\ ]-\epsilon,\epsilon[\rightarrow C^{\infty}_0(M):t\mapsto \phi(t)$ for some $\epsilon\in \R^+_0$ be a smooth map such that the $2$-form $\omega_{\phi(t)}:= \omega+ dd^c\phi(t)$ belongs to $\MOm$. If $f_t$ is the family of diffeomorphisms defined by equation (\ref{eq:Moser}). Denote by $\nabla^{\phi(t)}$ (resp. $\nabla^{J_t}$) the Levi-Civita connection of the K\"ahler metric $g_{\phi(t)}$ (resp. $g_{J_t}$). Then,
$$\mu(\nabla^{J_t})=f_t^*\mu^{\phi(t)}(\nabla^{\phi(t)}).$$
\end{lemme}

\begin{proof}
By Proposition \ref{prop:nablaJphi}, we have $\nabla^{J_t}=f_t^{-1}.\nabla^{\phi(t)}$. In terms
of the curvature tensors, it means that 
$$R^{\nabla^{J_t}}=f_t^*R^{\nabla^{\phi(t)}}.$$
Since $f_t^*\omega_{\phi(t)}=\omega$, a direct computation leads to 
$$\tilde{\mu}(\nabla^{J_t})=f_t^*\tilde{\mu}^{\phi(t)}(\nabla^{\phi(t)}).$$
The above equation implies that the integral
$$\int_M \tilde{\mu}^{\phi(t)}(\nabla^{\phi(t)})\frac{\omega_{\phi(t)}^n}{n!}$$
does not depend on $\phi(t)$. So that the normalised moment maps satisfy
$$\mu(\nabla^{J_t})=f_t^*\mu^{\phi(t)}(\nabla^{\phi(t)}).$$
\end{proof}

%%%%%%%%%%%%%%%%%%%%%%%%%%%%%%%%%%%%%%%%%%%%%%%%%%%%%%%%%%%%%%%%%%%%%%%%%%%%%%%%%%%%%%%%%%%%%%%%%%%%%%%%

\subsection{An associated elliptic operator}

%%%%%%%%%%%%%%%%%%%%%%%%%%%%%%%%%%%%%%%%%%%%%%%%%%%%%%%%%%%%%%%%%%%%%%%%%%%%%%%%%%%%%%%%%%%%%%%%%%%%%%%%

We show the problem of finding zeroes of $\mu^{\phi}(\nabla^{\phi})$ on $\MOm$ is an elliptic partial differential problem.
We mean that its linearization is given by an elliptic differential operator with smooth coefficients and with leading term the operator $\Delta^3$, the cube of the Laplacian.

\begin{prop} \label{prop:mudpdphi}
The moment map $\mu^{\phi}(\nabla^{\phi})$ depends analytically on the K\"ahler potential $\phi$ and its derivatives up to order $6$.
\end{prop}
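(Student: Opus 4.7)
The approach is a straightforward accounting of derivative orders in local coordinates, together with the observation that every ingredient of $\tilde{\mu}(\nabla^{\phi})$ is built from $\phi$ by a finite composition of analytic operations: linear combinations, products, contractions, partial differentiation, and matrix inversion on the open set where $g_{\phi}$ is positive definite.

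First, I would work in local holomorphic coordinates and note that $\omega_{\phi}=\omega+dd^c\phi$ has components affine in the 2-jet of $\phi$; the same is then true of the metric components $(g_{\phi})_{ij}=(\omega_{\phi})_{ik}J^{k}_{\ j}$. By Cramer's rule, the inverse $(g_{\phi})^{ij}$ is a rational function of the $(g_{\phi})_{kl}$, analytic wherever $g_{\phi}$ is invertible, which is precisely the defining condition $\omega_{\phi}\in\MOm$. The Christoffel symbols of $\nabla^{\phi}$ combine $(g_{\phi})^{-1}$ with $\partial(g_{\phi})$, so they depend analytically on the 3-jet of $\phi$.

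Next, the Riemann tensor $R^{\nabla^{\phi}}$ is a sum of $\partial\Gamma$ and $\Gamma\cdot\Gamma$, so it depends analytically on the 4-jet of $\phi$; the same holds for $Ric^{\nabla^{\phi}}$ and $Scal^{\nabla^{\phi}}$, obtained by contractions with $(g_{\phi})^{-1}$. The pointwise quadratic pieces $Ric^{\nabla^{\phi}}_{pq}(Ric^{\nabla^{\phi}})^{pq}$ and $R^{\nabla^{\phi}}_{pqrs}(R^{\nabla^{\phi}})^{pqrs}$ remain at the 4-jet level, as they are polynomial expressions in the previously considered quantities and in $(g_{\phi})^{-1}$. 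The only term pushing the order up is the Laplacian $\Delta^{\phi}Scal^{\nabla^{\phi}}$, whose leading contribution is $(g_{\phi})^{ij}\partial_i\partial_j Scal^{\nabla^{\phi}}$; this adds two derivatives to a 4-jet expression and thereby introduces the 6-jet of $\phi$. Lower-order contributions coming from the Christoffel correction of the Laplacian involve only the 5-jet.

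Combining these observations, $\tilde{\mu}(\nabla^{\phi})$ at a point $x$ is an analytic function of the 6-jet $j^{6}_{x}\phi$ on the open set where $g_{\phi}$ is positive definite; subtracting the topological constant $\mu_{0}$, which is independent of $\phi$, does not disturb the analytic dependence. The proof therefore reduces to bookkeeping; the only subtlety worth flagging is that analyticity propagates through matrix inversion, which is precisely what the $\MOm$ positivity condition ensures. I do not expect any real obstacle beyond carefully verifying that the order count 6 is sharp, which is transparent from the Laplacian term.
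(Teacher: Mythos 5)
Your proposal is correct and follows essentially the same route as the paper's proof: an order-of-derivatives count showing that the curvature depends on the $4$-jet of $\phi$, that index raising uses the inverse of $\omega_\phi$ (a $2$-jet quantity, analytic by positivity), and that only the term $\Delta^{\phi}Scal^{\nabla^{\phi}}$ pushes the dependence up to the $6$-jet. Your version merely spells out the bookkeeping (Cramer's rule, the role of the positivity condition in $\MOm$) that the paper leaves implicit.
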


\begin{proof}
 Recall that
$$ \mu^{\phi}(\nabla^{\phi})= -\frac{1}{2}\Delta^{\phi} Scal^{\nabla^{\phi}}  -\frac{1}{2} Ric^{\nabla^{\phi}}_{pq}(Ric^{\nabla^{\phi}})^{pq} + \frac{1}{4} R^{\nabla^{\phi}}_{pqrs}(R^{\nabla^{\phi}})^{pqrs}-\mu_0.$$
The curvature tensor $R^{\nabla^{\phi}}$ depends analytically on the K\"ahler potential $\phi$ and its derivatives up to order four. Indices are raised using the inverse of the K\"ahler form which depends analytically on $\phi$ and its derivatives up to order $2$. The leading term $-\frac{1}{2}\Delta^{\phi} Scal^{\nabla^{\phi}}$ involves derivatives of the K\"ahler potential up to order $6$.
\end{proof}

\begin{defi} \label{def:Dphi}
Let $\omega+dd^c\phi \in \MOm$. We define the {\bf operator $D^{\phi}:C^{\infty}(M)\rightarrow C^{\infty}(M)$} by : 
\begin{equation}
D^{\phi}(\psi):= \frac{d}{dt}|_0 \mu^{\phi+t\psi}(\nabla^{\phi+t\psi})
\end{equation}
for any $\psi \in C^{\infty}(M)$
\end{defi}

\begin{prop} \label{prop:Dphi}
\begin{enumerate}
\item[]
\item The operator $D^{\phi}$ is an elliptic partial differential operator of order $6$ null on constants with smooth coefficients depending analytically on the derivatives of $\phi$ up to order $6$.
\item If $(M,\omega_{\phi},J)$ satisfies the condition (C) and $\mu^{\phi}(\nabla^{\phi})=0$, then $\psi\in C^{\infty}(M)$ is in the kernel of $D^{\phi}$ if and only if $\Lr_{X^{\omega_{\phi}}_{\psi}}J=0$, where $X^{\omega_{\phi}}_{\psi}$ denotes the Hamiltonian vector field of $\psi$ with respect to $\omega_{\phi}$.
\end{enumerate}
\end{prop}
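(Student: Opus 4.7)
\medskip

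\noindent\textbf{Proof plan.}

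For (1), I would start from Proposition \ref{prop:mudpdphi}: $\mu^{\phi+t\psi}(\nabla^{\phi+t\psi})$ depends analytically on $\phi+t\psi$ and its derivatives up to order $6$, so its $t$-derivative at $t=0$ exhibits $D^{\phi}$ as a linear partial differential operator in $\psi$ of order at most $6$, whose coefficients are smooth and depend analytically on the derivatives of $\phi$ up to order $6$. Nullity on constants is immediate since $\omega_{\phi+tc}=\omega_{\phi}$ for every $c\in\R$, hence $\mu^{\phi+tc}(\nabla^{\phi+tc})$ is independent of $t$. For ellipticity I would compute the top-order symbol: only the summand $-\frac{1}{2}\Delta^{\phi}Scal^{\nabla^{\phi}}$ of $\mu^{\phi}(\nabla^{\phi})$ reaches order $6$ in $\phi$, the remaining curvature polynomials being of order $4$. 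Combining the external $\Delta^{\phi}$ with the classical Lichnerowicz-type linearisation of the K\"ahler scalar curvature---whose principal symbol is a nonzero scalar multiple of $|\xi|^4$---gives a principal symbol for $D^{\phi}$ that is a nonzero scalar multiple of $|\xi|^6$, proving ellipticity.

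For (2) the strategy is to realise $D^{\phi}(\psi)$ as a moment-map derivative and then test it against arbitrary Hamiltonian functions. I apply Lemma \ref{lemme:main} with $\omega_{\phi}$ in place of the reference form $\omega$, along the straight line $t\mapsto t\psi$. This produces diffeomorphisms $f_t$ with $f_t^{*}\omega_{\phi+t\psi}=\omega_{\phi}$ and a path $J_t:=f_t^{-1}.J\in\J_{int}(M,\omega_{\phi})$ satisfying
\[
\mu^{\phi}(\nabla^{J_t})=f_t^{*}\mu^{\phi+t\psi}(\nabla^{\phi+t\psi}).
\]
Since $\mu^{\phi}(\nabla^{\phi})=0$ by hypothesis, the pull-back contributes nothing upon differentiation at $t=0$, and I obtain
\[
\mu^{\phi}_{*\nabla^{\phi}}\bigl(\LC_{*J}(A)\bigr)=D^{\phi}(\psi),\qquad A:=\frac{d}{dt}\Big|_0 J_t.
\]
The infinitesimal generator of $f_t$ at $t=0$ is $X_0=-\grad^{\phi}\psi$, which the K\"ahler identity rewrites as $-JX^{\omega_{\phi}}_{\psi}$; integrability of $J$ yields the pointwise relation $\Lr_{JX}J=J\Lr_X J$ (a direct consequence of $N_J=0$), so $A=J\Lr_{X^{\omega_{\phi}}_{\psi}}J$.

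Pairing the previous equality against an arbitrary $H\in C^{\infty}_0(M)$ with the volume $\dvolphi$ and using the moment-map equation (\ref{eq:momentmu}) on $\E(M,\omega_{\phi})$, together with the equivariance of $\LC$ (Lemma \ref{lemme:LCequiv}) to write $\Lr_{X^{\omega_{\phi}}_H}\nabla^{\phi}=\LC_{*J}(\Lr_{X^{\omega_{\phi}}_H}J)$, I arrive at
\[
\int_M H\,D^{\phi}(\psi)\,\dvolphi \;=\; (\LC^{*}\Omega^{\E})_J\bigl(\Lr_{X^{\omega_{\phi}}_H}J,\;J\Lr_{X^{\omega_{\phi}}_{\psi}}J\bigr).
\]
Differentiating the normalisation $\int_M\mu^{\phi+t\psi}(\nabla^{\phi+t\psi})\,\omega_{\phi+t\psi}^n/n!=0$ at $t=0$ and using $\mu^{\phi}(\nabla^{\phi})=0$ shows $D^{\phi}(\psi)\in C^{\infty}_0(M)$. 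Hence $D^{\phi}(\psi)=0$ iff the left-hand side vanishes for every $H\in C^{\infty}_0(M)$; Condition C applied to $(M,\omega_{\phi},J)$ (Definition \ref{def:condC}) then forces $\Lr_{X^{\omega_{\phi}}_{\psi}}J=0$, and the converse direction is immediate from the same displayed formula.

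The most delicate points will be, in (1), pinning down the exact constant in the Lichnerowicz-type formula to confirm the $\Delta^3$ leading symbol is indeed nonzero, and, in (2), the reduction allowing Lemma \ref{lemme:main} to be invoked with $\omega_{\phi}$ in place of $\omega$ (a matter of relabelling, since the construction uses no datum specific to the base form) together with the integrability manipulation producing $A=J\Lr_{X^{\omega_{\phi}}_{\psi}}J$. All remaining steps are bookkeeping with the moment-map equation and the definition of Condition C.
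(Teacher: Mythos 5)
Your proposal is correct and follows essentially the same route as the paper: part (1) via the explicit linearisation of $-\tfrac12\Delta^{\phi}Scal^{\nabla^{\phi}}$ yielding the leading term $\tfrac12(\Delta^{\phi})^3$, and part (2) via Lemma \ref{lemme:main}, the Moser diffeomorphisms, the moment-map identity and Condition C (the paper just sets $\phi=0$ for notational convenience). The only blemish is a harmless sign: with $X_0=-JX^{\omega_{\phi}}_{\psi}$ one gets $A=-J\Lr_{X^{\omega_{\phi}}_{\psi}}J$ rather than $+J\Lr_{X^{\omega_{\phi}}_{\psi}}J$, which does not affect the vanishing argument.
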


\begin{proof}
\begin{enumerate}
\item[]
\item From Proposition \ref{prop:mudpdphi}, we deduce the analytic dependence of the coefficient of $D^{\phi}$ on $\phi$ and its derivatives up to order $6$. The leading term of $D^{\phi}$ occurs in $\frac{d}{dt}|_0\Delta^{\phi+t\psi} Scal^{\nabla^{\phi+t\psi}}$. Recall that
\begin{eqnarray*}
\frac{d}{dt}|_0 Scal^{\nabla^{\phi+t\psi}} & = & -(\Delta^{\phi})^2\psi -  (dd^c\psi)_{pq} (\rho^{\nabla^{\phi}})^{pq}, \\
\frac{d}{dt}|_0\Delta^{\phi+t\psi} F & = & (dd^cF)_{pq}(dd^c\psi)^{pq}.
\end{eqnarray*}
where $\rho^{\nabla^{\phi}}(.,.):=Ric^{\nabla^{\phi}}(J.,.)$ is the Ricci form and indices are rised using the symplectic form $\omega_{\phi}$.
Then, we have
$$D^{\phi}\psi=\frac{1}{2}(\Delta^{\phi})^3 \psi + \textrm{lower order terms} .$$
It means $D^{\phi}$ is an elliptic operator.
\item We assume $\phi=0$ for simplicity. By hypothesis we know $(M,\omega,J)$ satisfies the condition (C) and $\mu(\nabla)=0$. We compute 
$$\int_M D\psi_1 \psi_2\dvol= \frac{d}{dt}|_0\int_M \mu^{t\psi_1}(\nabla^{t\psi_1})\psi_2 \dvol$$
 for $\psi_1,\psi_2 \in C^{\infty}(M)$ (where $D$ stands for $D^{\phi}$ with $\phi=0$). Set $f_t$ the family of diffeomorphisms of $M$ generated by $X_t:= -grad^{t\psi_1}(\psi_1)$ so that $f_t^*\omega_{t\psi_1}=\omega$ for small $t$. Then, by Lemma \ref{lemme:main},
$$\int_M D\psi_1 \psi_2\dvol= \frac{d}{dt}|_0\int_M (f_t^{-1})^*\mu(\nabla^{J_t})\psi_2 \dvol,$$
for the family of complex structures $J_t:=(f_t^{-1}).J$. Using the moment map property of $\mu$ and the fact that $\mu(\nabla)=0$, we obtain :
$$\frac{d}{dt}|_0\int_M (f_t^{-1})^*\mu(\nabla^{J_t})\psi_2 \dvol = \Omega^{\E}_{\nabla}(\Lr_{X_{\psi_2}}\nabla, \frac{d}{dt}|_0\nabla^{J_t}).$$
Now, $\nabla^{J_t}=\LC(J_t)$ and $J_t=f_t^{-1}.J:=f_{t*}^{-1}Jf_{t*}$. So that, $\frac{d}{dt}J_t=\Lr_{f_{t*}^{-1}X_t}J_t$ and, by direct computation, one checks that $f_{t*}^{-1}X_t=-J_tX_{f_t^*\psi_1}$. So that $\frac{d}{dt}J_t= -\Lr_{J_tX_{f_t^*\psi_1}}J_t=-J_t\Lr_{X_{f_t^*\psi_1}}J_t$. Then, we have 
$$\Omega^{\E}_{\nabla}(\Lr_{X_{\psi_2}}\nabla, \frac{d}{dt}|_0\nabla^{J_t}) = -\Omega^{\E}_{\nabla}\left(\LC_{*J}(\Lr_{X_{\psi_2}}J),\LC_{*J}(J\Lr_{X_{\psi_1}}J)\right).$$
Because $(M,\omega,J)$ satisfies the condition C, we see that $\int_M D\psi_1 \psi_2\dvol=0$ for all $\psi_2$ if and only if $\Lr_{X_{\psi_1}}J=0$.
\end{enumerate}
\end{proof}

\begin{ex}
Consider the flat torus $(\C^{n}/\Z^{2n}, \omega_{std},i)$ with its flat connection $\nabla$. From the computations of point $1$ in the above proof, at $\omega_{\phi}=\omega_{std}$, we have 
$$D^{\phi}\psi=\frac{1}{2}(\Delta^{\phi})^3\psi.$$
\end{ex}

\begin{defi}\label{def:Dphi*}
The {\bf operator $(D^{\phi})^*$} is the formal adjoint of the operator $D^{\phi}$ with respect to the K\"ahler form $\omega_{\phi}$. That is $(D^{\phi})^*$ is defined by the equation
$$\int_M FD^{\phi}G \dvolphi=\int_M (D^{\phi})^*F.G \dvolphi.$$
\end{defi}

\noindent Because $(D^{\phi})^*$ is the formal adjoint of the operator $D^{\phi}$, the following Proposition is obvious.

\begin{prop} \label{prop:Dphi*}
The operator $(D^{\phi})^*$ is an elliptic partial differential operators of order $6$ with smooth coefficients depending analytically on the derivatives of $\phi$ up to order $12$.
\end{prop}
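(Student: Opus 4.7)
The plan is to invoke the standard fact that formally adjointing a differential operator preserves its order and the leading symbol up to complex conjugation, then track how the dependence on $\phi$ doubles because integration by parts differentiates the coefficients.

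First I would recall that on the closed manifold $(M,\omega_\phi)$, integration against the volume form $\omega_\phi^n/n!$ lets us compute $(D^\phi)^*$ by integration by parts. Writing $D^\phi=\sum_{|\alpha|\le 6}a_\alpha\,\partial^\alpha$ in local coordinates, a finite number of integrations by parts expresses $(D^\phi)^*$ as $\sum_{|\alpha|\le 6}(-1)^{|\alpha|}\partial^\alpha(\,\cdot\,a_\alpha)$, modified by the Jacobian factors coming from the density $\omega_\phi^n/n!$. In particular no boundary contributions appear since $M$ is closed, and the resulting operator is again a linear differential operator of order at most $6$, with principal symbol equal to that of $D^\phi$ (the cube of the Laplacian is formally self-adjoint). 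By Proposition~\ref{prop:Dphi}(1) this principal symbol is $\tfrac12\|\xi\|^6_{\omega_\phi}$, which is non-vanishing for $\xi\ne 0$, so $(D^\phi)^*$ is elliptic of order exactly $6$.

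Next I would determine how $(D^\phi)^*$ depends on $\phi$. The coefficients of $(D^\phi)^*$ are polynomials (with universal coefficients) in the derivatives of the $a_\alpha$ up to order $|\alpha|\le 6$ and in the derivatives of the volume density up to order $6$. Proposition~\ref{prop:Dphi}(1) tells us that each $a_\alpha$ depends analytically on the derivatives of $\phi$ up to order $6$; consequently its partial derivatives up to order $6$ depend analytically on the derivatives of $\phi$ up to order $6+6=12$. The density $\omega_\phi^n/n!$ depends on derivatives of $\phi$ only up to order $2$, so differentiating it six times stays within order $8\le 12$. Combining these two contributions, every coefficient of $(D^\phi)^*$ is an analytic function of the derivatives of $\phi$ up to order $12$, which yields the claim.

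The main obstacle, which is minor, is bookkeeping: one has to make sure that the integration-by-parts argument is carried out with respect to $\omega_\phi^n/n!$ (not the background $\omega^n/n!$) so that the definition of $(D^\phi)^*$ in Definition~\ref{def:Dphi*} is respected, and that the bound $12$ is tight, i.e.\ all the contributions from differentiating $a_\alpha$ and the volume form are absorbed into the quoted order.
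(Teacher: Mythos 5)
Your argument is correct and is exactly the routine verification behind the paper's treatment: the paper simply declares the proposition ``obvious'' from the definition of the formal adjoint, and your integration-by-parts bookkeeping (order and principal symbol preserved, coefficient dependence doubling from $6$ to $12$ because the adjoint's coefficients involve up to six further derivatives of the $a_\alpha$ and of the volume density) is the standard computation being implicitly invoked. No gap.
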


%%%%%%%%%%%%%%%%%%%%%%%%%%%%%%%%%%%%%%%%%%%%%%%%%%%%%%%%%%%%%%%%%%%%%%%%%%%%%%%%%%%%%%%%%%%%%%%%%%%%%%%%

\subsection{Manifold structure for $\F^{-1}(0)$}

%%%%%%%%%%%%%%%%%%%%%%%%%%%%%%%%%%%%%%%%%%%%%%%%%%%%%%%%%%%%%%%%%%%%%%%%%%%%%%%%%%%%%%%%%%%%%%%%%%%%%%%%

We analyse the structure of the space of zeroes of $\F$ or equivalently the space of $\omega_{\phi}\in \MOm$ such that $\mu^{\phi}(\nabla^{\phi})=0$, when $(M,\omega_{\phi},J)$ satisfies the condition (C). The key property is that the Hessian of $\F$ at a zero is given by a non-negative elliptic operator.

\begin{prop} \label{prop:dF}
The map $\F:\MOm\rightarrow \R$ is smooth and its differential at $\omega_{\phi}$ evaluated at $\psi \in T_{\omega_{\phi}}\MOm \simeq C^{\infty}_0(M)$ is 
$$d\F_{\phi}(\psi):=\int_M \left(2(D^{\phi})^*\mu^{\phi}(\nabla^{\phi})-\Delta^{\phi}(\mu^{\phi}(\nabla^{\phi}))^2\right)\psi \dvolphi.$$
Moreover, if $\omega_{\phi}$ is a zero of $\F$, then it is a critical point of $\F$ and the Hessian $d^2\F_{\phi}$ of $\F$ at $\omega_{\phi}$ is given by
$$d^2\F_{\phi}(\psi_1,\psi_2):=\int_M \left(2(D^{\phi})^*D^{\phi}\psi_1\right)\psi_2 \dvolphi,$$
for $\psi_1,\psi_2 \in C^{\infty}_0(M)$
\end{prop}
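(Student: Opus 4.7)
The plan is to differentiate the defining integral of $\F$ along the affine path $t\mapsto\omega_{\phi+t\psi}$, handle the two factors (the squared moment map and the volume form) by the product rule, and convert each resulting integral into a pairing against $\psi$ via the formal adjoint and an integration by parts. Smoothness of $\F$ is immediate from Proposition \ref{prop:mudpdphi}, which gives analytic dependence of $\mu^\phi(\nabla^\phi)$ on the $6$-jet of $\phi$, together with the evident smooth dependence of $\omega_\phi^n/n!$ on the $2$-jet of $\phi$.

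For the first variation, write
$$\F(\omega_{\phi+t\psi})=\int_M\bigl(\mu^{\phi+t\psi}(\nabla^{\phi+t\psi})\bigr)^{2}\,\frac{\omega_{\phi+t\psi}^{n}}{n!}$$
and differentiate at $t=0$. The definition of $D^\phi$ (Definition \ref{def:Dphi}) handles the moment-map factor, and $\frac{d}{dt}|_0(\omega_\phi+t\,dd^c\psi)^n/n!=dd^c\psi\wedge\omega_\phi^{n-1}/(n-1)!$ handles the volume factor, giving
$$d\F_\phi(\psi)=2\int_M\mu^{\phi}(\nabla^{\phi})\,D^\phi\psi\,\dvolphi+\int_M\bigl(\mu^{\phi}(\nabla^{\phi})\bigr)^{2}\,dd^c\psi\wedge\frac{\omega_\phi^{n-1}}{(n-1)!}.$$
The first summand rewrites as $2\int_M(D^\phi)^{*}\mu^{\phi}(\nabla^{\phi})\,\psi\,\dvolphi$ by Definition \ref{def:Dphi*}. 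For the second, two applications of Stokes' theorem together with $d\omega_\phi=0$ move $dd^c$ off $\psi$ onto $(\mu^\phi(\nabla^\phi))^2$, after which the K\"ahler identity $dd^cF\wedge\omega_\phi^{n-1}/(n-1)!=-\Delta^\phi F\cdot\omega_\phi^n/n!$ turns the expression into $-\int_M\Delta^\phi\bigl((\mu^\phi(\nabla^\phi))^2\bigr)\psi\,\dvolphi$. Adding the two pieces produces the stated formula for $d\F_\phi$.

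The critical-point assertion is then read off by inspection: every term of the integrand in $d\F_\phi$ carries a factor of $\mu^\phi(\nabla^\phi)$, so $d\F_\phi\equiv 0$ as soon as $\mu^\phi(\nabla^\phi)=0$. To extract the Hessian, I would differentiate the formula for $d\F_{\phi+t\psi_1}(\psi_2)$ at $t=0$ under the assumption $\mu^\phi(\nabla^\phi)=0$. Of the contributions produced by the product rule, those that differentiate $(D^{\phi+t\psi_1})^*$, $\Delta^{\phi+t\psi_1}$, $(\mu^{\phi+t\psi_1}(\nabla^{\phi+t\psi_1}))^2$, or the volume form all retain a factor of $\mu^\phi(\nabla^\phi)$ at $t=0$ and hence vanish; the single surviving contribution comes from differentiating the inner $\mu^{\phi+t\psi_1}(\nabla^{\phi+t\psi_1})$ in the first summand, which by Definition \ref{def:Dphi} yields $D^\phi\psi_1$, leaving $d^2\F_\phi(\psi_1,\psi_2)=2\int_M(D^\phi)^{*}D^\phi\psi_1\cdot\psi_2\,\dvolphi$.

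The only delicate bookkeeping is the K\"ahler-geometric conversion $dd^c\psi\wedge\omega_\phi^{n-1}/(n-1)!\leftrightsquigarrow-\Delta^\phi\psi\cdot\dvolphi$ and the associated sign; no analytic input beyond Proposition \ref{prop:mudpdphi} and the definitions of $D^\phi$ and $(D^\phi)^*$ is needed, so I do not expect a genuine obstacle.
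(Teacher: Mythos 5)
Your proposal is correct and follows essentially the same route as the paper: differentiate under the integral, identify the moment-map variation with $D^{\phi}\psi$, convert the volume-form variation to $-\Delta^{\phi}\psi\,\dvolphi$, and pass to $(D^{\phi})^{*}$ and the self-adjoint Laplacian; for the Hessian, the same observation that every product-rule term retaining a factor of $\mu^{\phi}(\nabla^{\phi})$ dies at a zero. The only cosmetic difference is that you compute the bilinear Hessian directly by differentiating $d\F_{\phi+t\psi_1}(\psi_2)$, whereas the paper computes it on the diagonal and invokes symmetry.
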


\begin{proof}
The fact that $\F$ is smooth directly follows from Proposition \ref{prop:Dphi}.

\noindent Now, we compute the differential of $\F$ :
\begin{eqnarray*}
\frac{d}{dt}|_0 \F(\omega_{\phi+t\psi}) & = & 2 \int_M \left(\frac{d}{dt}|_0\mu^{\phi+t\psi}(\nabla^{\phi+t\psi})\right)\mu^{\phi}(\nabla^{\phi})  \dvolphi + \int_M \mu^{\phi}(\nabla^{\phi})^2 \frac{d}{dt}|_0 \frac{\omega_{\phi+t\psi}^n}{n!} \\
 & = & 2 \int_M \left(D^{\phi} \psi \right)\mu^{\phi}(\nabla^{\phi})  \dvolphi - \int_M \mu^{\phi}(\nabla^{\phi})^2 \Delta^{\phi} \psi \dvolphi, \\
 & = & \int_M \left(2(D^{\phi})^*\mu^{\phi}(\nabla^{\phi})-\Delta^{\phi}(\mu^{\phi}(\nabla^{\phi}))^2\right)\psi \dvolphi.
\end{eqnarray*}

\noindent Because it is symmetric, the Hessian of $\F$ is determined by $d^2\F_{\phi}(\psi,\psi)$ for $\psi \in C^{\infty}_0(M)$. We compute
\begin{eqnarray*}
d^2\F_{\phi}(\psi,\psi) & = & \frac{d^2}{dt^2}|_0 \F(\omega_{\phi+t\psi}) \\
 & = & \frac{d}{dt}|_0 \int_M \left(2(D^{\phi+t\psi})^*\mu^{\phi+t\psi}(\nabla^{\phi+t\psi})-\Delta^{\phi+t\psi}(\mu^{\phi+t\psi}(\nabla^{\phi+t\psi}))^2\right)\psi \frac{\omega^n_{\phi+t\psi}}{n!}.
\end{eqnarray*}
Since $\mu^{\phi}(\nabla^{\phi})=0$, we have
$$d^2\F_{\phi}(\psi,\psi) =   \int_M \left(2(D^{\phi})^*\frac{d}{dt}|_0\mu^{\phi+t\psi}(\nabla^{\phi+t\psi})\right)\psi \dvolphi = \int_M \left(2(D^{\phi})^*D^{\phi}\psi\right)\psi \dvolphi.$$
\end{proof}

From now on, we will work in a neighbourhood of a given $\omega\in \MOm$. Let $U\in C_0^{\infty}(M)$ be a convex neighbourhood of the origin such that if $\phi\in U$ then $\omega_{\phi}\in \MOm$. Then $d\F$ induces a smooth map of Fr\'echet spaces
$$\widetilde{d\F}:U\rightarrow C_0^{\infty}(M)$$
defined for $\phi\in U$ by 
$$\widetilde{d\F}(\phi):=2(D^{\phi})^*\mu^{\phi}(\nabla^{\phi})-\Delta^{\phi}(\mu^{\phi}(\nabla^{\phi}))^2-\int_M 2(D^{\phi})^*\mu^{\phi}(\nabla^{\phi})-\Delta^{\phi}(\mu^{\phi}(\nabla^{\phi}))^2 \dvol.$$
Moreover, from Proposition \ref{prop:dF}, $\omega_{\phi}$ with $\phi \in U$ is a critical point of $\F$ if and only if $\widetilde{d\F}(\phi)\equiv 0$.

We will now extend the map $\widetilde{d\F}$ to a smooth map defined on suitable Sobolev spaces. Denote by $\|.\|_0$ the $L^2$-norm induced by the K\"ahler metric $g$ on $C^{\infty}(M)$. Let $l>0$ be an integer, for $K\in C^{\infty}(M)$, $\nabla^l K$ is a section of $T^*M^{\otimes l}$. The K\"ahler metric $g$ induces a scalar product and then a $L^2$-norm $\|\cdot\|_{g,l}$ on $\Gamma(T^*M^{\otimes l})$ we define the $l$-th Sobolev norm $\|.\|_l$ on $C^{\infty}(M)$ by : 
$$\|H\|_l:= \left(\|H\|_{0}^2+\sum_{j=1}^{l} \| \nabla^j H\|_{g,j}^2\right)^{\frac{1}{2}}.$$
The $l$-th Sobolev space $H^l(M)$ is defined to be the completion of $C^{\infty}(M)$ for the $l$-th Sobolev norm. The $H^l(M)$ are Hilbert spaces. The subspaces $H^l_0(M)$ denote the closure of $C^{\infty}_0(M)$ in $H^l(M)$, they are also  Hilbert spaces.

\begin{prop}
For $l\geq \frac{\dim(M)}{2}+12$, the map $\widetilde{d\F}$ extends to a smooth map 
$$\widetilde{d\F}:\widetilde{U}\subset H^l_0(M) \rightarrow H^{l-12}_0(M),$$
for $\widetilde{U}$ a neighbourhood of the origin in $H^l_0(M)$.
\end{prop}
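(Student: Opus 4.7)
The plan is to recognise $\widetilde{d\F}(\phi)$ as a universal polynomial expression in the components of $\nabla^{j}\phi$ for $0\le j\le 12$ and of the inverse K\"ahler metric $g_\phi^{-1}$, and then to apply the Sobolev multiplication and Nemytskii composition theorems on the closed manifold $M$.

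First I would unpack the algebraic structure. By Proposition \ref{prop:mudpdphi}, $\mu^{\phi}(\nabla^{\phi})$ is polynomial in the components of $\nabla^{j}\phi$ for $j\le 6$ and in the entries of $g_\phi^{-1}$. By Proposition \ref{prop:Dphi*} the coefficients of the order-$6$ operator $(D^{\phi})^{*}$ are polynomial in $\nabla^{j}\phi$ for $j\le 12$ and in $g_\phi^{-1}$. Applying $(D^{\phi})^{*}$ to $\mu^{\phi}(\nabla^{\phi})$ thus produces an expression polynomial in $\nabla^{j}\phi$ for $j\le 12$, and the remaining term $\Delta^{\phi}(\mu^{\phi}(\nabla^{\phi}))^{2}$ involves only derivatives up to order $8$. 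Consolidating,
$$\widetilde{d\F}(\phi)\;=\;P\bigl(\nabla\phi,\nabla^{2}\phi,\ldots,\nabla^{12}\phi,\,g_{\phi}^{-1}\bigr)-c(\phi),$$
where $P$ is polynomial in its matrix-valued arguments and $c(\phi)$ is the integral average that enters the definition, a continuous linear functional after the previous step has been carried out.

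Second, I would apply the standard Sobolev calculus on the closed manifold $M$. Set $m=\dim M$ and assume $l\ge m/2+12$. Shrink $\widetilde U\subset H^{l}_{0}(M)$ around the origin so that, through the continuous embedding $H^{l-2}\hookrightarrow C^{0}(M)$, the symmetric tensor $g_{\phi}$ remains uniformly positive definite on $M$. Matrix inversion being smooth on the open set of positive-definite matrices, the Nemytskii map $\phi\mapsto g_{\phi}^{-1}\in H^{l-2}(M,\mathrm{End}(TM))$ is smooth on $\widetilde U$. The Sobolev multiplication theorem on $M$ provides continuous bilinear maps $H^{l-j}\times H^{l-k}\to H^{l-12}$ whenever $j,k\le 12$ and $l\ge m/2+12$ (the critical case $j=k=12$ is exactly where the hypothesis on $l$ is tight). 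Combining these with the smoothness of the substitution of $g_{\phi}^{-1}$ yields the smoothness of $\phi\mapsto P(\ldots)\in H^{l-12}(M)$, and subtraction of the mean (a continuous linear projection) lands the image in $H^{l-12}_{0}(M)$.

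The main obstacle is the nonlinear substitution step: one must verify that $\widetilde U$ can be chosen so that $g_{\phi}$ stays uniformly positive definite, and one must check Moser-type multiplicative inequalities at the threshold $l-12\ge m/2$ for each monomial of $P$. Once these two ingredients are in hand, the existence and smoothness of the extension are forced by the polynomial nature of $P$, and the loss of exactly $12$ derivatives is dictated by the term $(D^{\phi})^{*}\mu^{\phi}(\nabla^{\phi})$, whose worst contributions are products of factors each involving $\nabla^{12}\phi$.
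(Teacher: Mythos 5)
Your argument is correct, but it runs on different machinery than the paper's. The paper's proof goes through the Sobolev embedding theorem: for $l\geq \frac{\dim M}{2}+12$ one has continuous inclusions $H^l_0(M)\hookrightarrow C^{12}(M)$ and $H^{l-12}_0(M)\hookrightarrow C^0(M)$; since $\widetilde{d\F}$ is continuous on the Fr\'echet neighbourhood $U$ it extends to a continuous map on a $C^{12}$-neighbourhood with values in $C^0(M)$, and the analytic dependence on the $12$-jet of $\phi$ (Propositions \ref{prop:Dphi} and \ref{prop:Dphi*}) is then invoked to conclude that the image of $V\cap H^l_0(M)$ lands in $H^{l-12}_0(M)$ and that the restriction is smooth. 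You instead bypass the $C^k$ spaces entirely and work with the Sobolev multiplication theorem and Nemytskii composition: you decompose $\widetilde{d\F}(\phi)$ as a polynomial in $\nabla^j\phi$, $j\le 12$, and in $g_\phi^{-1}$, control the matrix inversion on a neighbourhood where $g_\phi$ stays uniformly positive definite, and assemble the result from continuous multilinear maps $H^{l-j}\times H^{l-k}\to H^{l-12}$. What your route buys is a more transparent proof of \emph{smoothness} of the extension (polynomials built from continuous multilinear maps are automatically smooth between Hilbert spaces), which is precisely the step the paper leaves terse; what the paper's route buys is brevity, leaning on the already-established Fr\'echet continuity. One shared caveat: at the endpoint $l=\frac{\dim M}{2}+12$ (which the hypothesis $l\ge\frac{\dim M}{2}+12$ permits, since $\dim M$ is even) both arguments are at the borderline --- the embedding $H^{l-12}\hookrightarrow C^0$ and the algebra property of $H^{l-12}$ each require the strict inequality $l-12>\frac{\dim M}{2}$ --- so strictly speaking both proofs need $l>\frac{\dim M}{2}+12$ or a separate endpoint argument; you flag this tightness yourself, and it does not affect the application in Theorem \ref{theor:loctheor1}, where $l$ may simply be taken larger.
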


\begin{proof}
When $l\geq \frac{\dim(M)}{2}+12$, the Sobolev embedding's Theorem states the inclusions $H^l_0(M)\hookrightarrow C^{12}(M)$ and $H^{l-12}_0(M)\hookrightarrow C^{0}(M)$ are continuous, see for example \cite{Aub}. Since $\widetilde{d\F}$ is continuous for the Fr\'echet topology on $U$, it extends to a continuous map defined on a neighbourhood $V$ of the origin in $C^{12}(M)$ with value in $C^0(M)$. By Propositions \ref{prop:Dphi} and \ref{prop:Dphi*}, $\widetilde{d\F}(\phi)$ depends analytically on $\phi$ and its derivatives of order at most $12$ so that if $\phi \in V \cap H^{l}_0(M)$ then $\widetilde{d\F}(\phi)$ sits in $H^{l-12}_0(M)$. Then, the restriction $\widetilde{d\F}:V\cap H^{l}_0(M) \rightarrow H^{l-12}_0(M)$ is a smooth map of Hilbert manifolds.
\end{proof}

To finish this subsection, we will assume $\omega$ is a zero of $\F$ satisfying the condition C and we show that the zeroes of $\F$ around $\omega$ form a manifold. We will write $D$ for $D^{\phi}$ when $\phi=0$. 

Let $(M,\omega,J)$ be a K\"ahler manifold and consider its reduced automorphism group $H_0(M,J)$ that is the connected (finite dimensional) Lie group whose Lie algebra $\mathfrak{h}_0$ consists of real holomorphic vector fields $Z$ (i.e. $\Lr_ZJ=0$) of the form $X_H+JX_K$ for $H$ and $K\in C^{\infty}_0(M)$. 

If $\omega$ is a zero of $\F$, then Lemma \ref{lemme:main} with a family $f_t\in H_0(M,J)$ shows that all the points in the $H_0(M,J)$-orbit $\mathcal{O}$ of $\omega$ are zeroes of $\F$. The tangent space to $\mathcal{O}$ is the space of $K\in C^{\infty}_0(M)$ such that there exists $H \in C^{\infty}_0(M)$ with $X_H+JX_K\in \mathfrak{h}_0$. Moreover, if $\omega$ satisfies the Condition C, then, by Proposition \ref{prop:condCcplx}, all points in $\mathcal{O}$ satisfy this condition. Our main Theorem is :
\begin{theoremprinc}
Assume the K\"ahler manifold $(M,\omega,J)$ has a Ricci tensor which is non-negative everywhere. 

If $\omega$ is a zero of $\F$, then there exists an open neighbourhood $U$ of $\omega$ in $\MOm$ such that the only zeroes of $\F$ inside $U$ is the $H_0(M,J)$-orbit of $\omega$, where $H_0(M,J)$ is the reduced automorphism group of $(M,\omega,J)$.
\end{theoremprinc}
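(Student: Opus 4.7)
The strategy is to show that $\F^{-1}(0)=\mathcal{O}$ in a neighborhood of $\omega$, via two facts: (i) $\F^{-1}(0)$ is contained in the set of critical points of $\F$, because $\F\geq 0$ and $\F(\omega)=0$ makes $\omega$ a minimum; and (ii) the critical set of $\F$ near $\omega$ is a smooth finite-dimensional submanifold that coincides with the orbit $\mathcal{O}$.

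\textbf{Easy inclusion and tangent to the orbit.} The inclusion $\mathcal{O}\subseteq \F^{-1}(0)$ follows from Lemma \ref{lemme:main}: for a curve $f_t\in H_0(M,J)$ through the identity, set $\omega_{\phi(t)}:=(f_t^{-1})^*\omega\in \MOm$; then $f_t^*\omega_{\phi(t)}=\omega$ and $J_t:=f_t^{-1}.J=J$ because each $f_t$ is biholomorphic, so $\mu^{\phi(t)}(\nabla^{\phi(t)})=(f_t^{-1})^*\mu(\nabla)=0$ and $\F(\omega_{\phi(t)})=0$. Next I identify $T_\omega\mathcal{O}=\ker D$ inside $C^\infty_0(M)$. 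The inclusion $T_\omega\mathcal{O}\subseteq \ker D$ is immediate from $\mathcal{O}\subseteq \Phi^{-1}(0)$, where $\Phi(\omega_\phi):=\mu^\phi(\nabla^\phi)$ has $d\Phi_\omega=D$. For the reverse, take $\psi\in \ker D$: Proposition \ref{prop:omeganegative} converts the non-negative Ricci hypothesis into Condition C, so Proposition \ref{prop:Dphi}(2) gives $\Lr_{X_\psi}J=0$. Integrability of $J$ (vanishing Nijenhuis tensor) yields $\Lr_{JY}J=J\Lr_Y J$ for every vector field $Y$; applied to $Y=X_\psi$ this gives $\Lr_{JX_\psi}J=0$. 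The tautological decomposition $JX_\psi=X_0+JX_\psi$ (with $H=0,K=\psi$) then exhibits $JX_\psi\in \mathfrak{h}_0$, so $\psi\in T_\omega\mathcal{O}$.

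\textbf{Critical points form a smooth submanifold.} Pass to the Hilbert-manifold completion $H^l_0(M)$ for $l\geq \dim M/2+12$, where $\widetilde{d\F}$ is a smooth map into $H^{l-12}_0(M)$. Its linearization at $\omega$ (where $\mu^0(\nabla^0)=0$) is the self-adjoint elliptic operator $L:=2D^*D$ of order $12$; since $\langle L\psi,\psi\rangle_{L^2}=2\|D\psi\|_{L^2}^2$, we have $\ker L=\ker D=\mathrm{coker}\,L$, which is finite-dimensional. Lyapunov--Schmidt reduction then presents the critical set $\{\widetilde{d\F}=0\}$ near $\omega$ as the zero set of a smooth map $h:\ker D\to \ker D$ (with $h(0)=0$) inside a smooth $\dim\ker D$-dimensional manifold tangent to $\ker D$ at $\omega$. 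The orbit $\mathcal{O}$ sits inside this critical set as a smooth submanifold of the same dimension $\dim T_\omega\mathcal{O}=\dim\ker D$ tangent to $\ker D$; the projection of $\mathcal{O}$ onto $\ker D$ from the Lyapunov--Schmidt decomposition has invertible differential at $\omega$, so its image is an open neighborhood of $0$ in $\ker D$, forcing $h$ to vanish identically there. Therefore the critical set equals $\mathcal{O}$ in a neighborhood of $\omega$.

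\textbf{Conclusion and main obstacle.} Since $\F\geq 0$ with $\F(\omega)=0$, any zero of $\F$ near $\omega$ is a local minimum and hence a critical point; combined with the previous step this yields $\F^{-1}(0)=\mathcal{O}$ in a neighborhood. Elliptic regularity applied to $\widetilde{d\F}=0$, which depends analytically on derivatives of $\phi$ up to order $12$, upgrades Sobolev solutions to smooth ones and transfers the conclusion to the Fr\'echet manifold $\MOm$. The main technical hurdle is the Lyapunov--Schmidt setup in the Sobolev category together with tracking the $H_0(M,J)$-equivariance of $\widetilde{d\F}$, so that the Hilbert splitting $H^l_0=\ker D\oplus (\ker D)^\perp$ is compatible with the orbit structure and absorbs the derivative losses coming from the order-$12$ operator $D^*D$. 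Once this is in place, the dimension-and-tangent comparison between $\mathcal{O}$ and the Lyapunov--Schmidt solution manifold closes the argument.
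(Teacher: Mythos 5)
Your proposal is correct and follows essentially the same route as the paper: reduce to the zero set of $\widetilde{d\F}$ on Sobolev completions, use ellipticity and self-adjointness of $D^*D$ to get a finite-dimensional kernel equal to $\ker D$, identify $\ker D$ with the orbit directions via Condition C (Propositions \ref{prop:omeganegative} and \ref{prop:Dphi}), and conclude by an implicit-function-theorem argument plus the observation that zeroes of $\F\geq 0$ are critical points. Your Lyapunov--Schmidt reduction is just a repackaging of the paper's application of the implicit function theorem to $\widetilde{d\F}^{-1}(\ker D^*D)$, and your direct verification that $T_\omega\mathcal{O}=\ker D$ (differentiating $\mu^\phi(\nabla^\phi)=0$ along the orbit for one inclusion, the Nijenhuis identity $\Lr_{JX_\psi}J=J\Lr_{X_\psi}J$ for the other) is a slightly more explicit version of the paper's dimension comparison.
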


\noindent The proof of Theorem \ref{theorprinc:zeroF} is a direct corollary of the following local statement.
 
\begin{theorem} \label{theor:loctheor1}
Let $(M,\omega,J)$ be a K\"ahler manifold satisfying the Condition C and such that $\omega$ is a zero of $\F$. Then, $\widetilde{d\F}(0)=0$ and $(\widetilde{d\F})^{-1}(0)$ is a submanifold of $\widetilde{U}$ (shrinking $\widetilde{U}$ if necessary) which is the intersection of $U$ with the $H_0(M,J)$-orbit of $\omega$ .
\end{theorem}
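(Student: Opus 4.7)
The plan is a Lyapunov--Schmidt reduction in Sobolev completions, exploiting the a priori fact that the full $H_0(M,J)$-orbit of $\omega$ is contained in the zero set of $\widetilde{d\F}$. The equation $\widetilde{d\F}(0)=0$ is immediate: since $\F(\omega)=0$ forces $\mu(\nabla)\equiv 0$, and every term of the formula for $\widetilde{d\F}$ in Proposition~\ref{prop:dF} contains $\mu^\phi(\nabla^\phi)$ as a factor, all terms vanish at $\phi=0$. Differentiating $\widetilde{d\F}$ at $\phi=0$ and using $\mu(\nabla)=0$, the contributions from $\Delta^\phi(\mu^\phi)^2$ and from the variation of $(D^\phi)^*$ both vanish, and, up to the $L^2$-projection onto $C^\infty_0(M)$, the linearisation reduces to
\[L := (D\widetilde{d\F})_0 = 2D^*D,\]
which by Propositions~\ref{prop:Dphi} and \ref{prop:Dphi*} is a self-adjoint, non-negative elliptic operator of order $12$.

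The key geometric input is Proposition~\ref{prop:Dphi}(2): under Condition C with $\mu(\nabla)=0$, $\ker D \cap C^\infty_0(M) = \{\psi : \Lr_{X_\psi}J=0\}$, which is precisely the tangent space $T_\omega\mathcal{O}$ to the $H_0(M,J)$-orbit, so $\ker L=\ker D=T_\omega\mathcal{O}$ is finite-dimensional. Moreover, for any $f\in H_0(M,J)$, the fact that $f$ preserves $J$ yields $g_{f^*\omega}=f^*g$ and thus $\nabla^{f^*\omega}=f^*\nabla$; by naturality of the Cahen--Gutt moment map under symplectic diffeomorphisms and topological invariance of the constant $\mu_0$, this gives $\mu^{f^*\omega}(\nabla^{f^*\omega})=f^*\mu(\nabla)=0$. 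Hence $\mathcal{O}\subseteq \F^{-1}(0)\subseteq \widetilde{d\F}^{-1}(0)$, and since the infinitesimal $\mathfrak{h}_0$-action on the Kähler potential surjects onto $\ker L$, the orbit is an embedded finite-dimensional submanifold of $\MOm$ through $\omega$ of dimension $\dim\ker L$.

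The reduction itself is classical. On the Hilbert extension $\widetilde{d\F}:\widetilde U\cap H^l_0(M)\to H^{l-12}_0(M)$ for $l\geq \frac{\dim M}{2}+12$, self-adjoint Fredholmness of $L$ gives the $L^2$-orthogonal splittings $H^l_0(M)=\ker L\oplus K^\perp$ and $H^{l-12}_0(M)=\ker L\oplus K^\perp$, with $L|_{K^\perp}$ an isomorphism between the corresponding slices. Letting $\pi$ denote the projection onto $K^\perp$, the implicit function theorem applied to $\pi\circ\widetilde{d\F}$ produces a smooth graph over a neighbourhood of $0$ in $\ker L$ whose image is the submanifold $(\pi\widetilde{d\F})^{-1}(0)$ of dimension $\dim\ker L$, tangent to $\ker L$ at $0$. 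The chain $\mathcal{O}\subseteq \widetilde{d\F}^{-1}(0)\subseteq (\pi\widetilde{d\F})^{-1}(0)$ sandwiches $\widetilde{d\F}^{-1}(0)$ between two submanifolds through $\omega$ sharing the tangent space $\ker L$, forcing all three to coincide in a neighbourhood of $\omega$. Elliptic regularity of $L$ then upgrades any Sobolev solution to a smooth Kähler potential, yielding the theorem.

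The main obstacle is the identification of the analytic kernel $\{\psi : \Lr_{X_\psi}J=0\}$ with the geometric tangent space $\{K\in C^\infty_0(M) : \exists H,\ X_H+JX_K\in\mathfrak{h}_0\}$ (encoded in Proposition~\ref{prop:Dphi}(2)) together with the verification that $\mathcal{O}$ is an embedded submanifold of $\MOm$ of the expected dimension; additional bookkeeping is required to track the $L^2$-projection onto $C^\infty_0$ in the linearisation. Once these points are settled, the argument closely follows the template of LeBrun--Simanca's local structure theorem for constant scalar curvature Kähler metrics, with the Lichnerowicz operator replaced by $D$.
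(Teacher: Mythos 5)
Your proposal is correct and follows essentially the same route as the paper: both establish $\widetilde{d\F}(0)=0$ from $\mu(\nabla)=0$, identify the linearisation with the self-adjoint elliptic operator $2D^*D$, use the Fredholm splitting and the implicit function theorem to realise $\widetilde{d\F}^{-1}(\ker(D^*D))=(\pi\circ\widetilde{d\F})^{-1}(0)$ as a finite-dimensional submanifold tangent to $\ker(D^*D)$, and then sandwich $\widetilde{d\F}^{-1}(0)$ between the $H_0(M,J)$-orbit and this submanifold, using Proposition \ref{prop:Dphi}(2) and Condition C to match the kernel with the orbit's tangent space. The only differences are presentational (your explicit mention of elliptic regularity and of the surjectivity of the infinitesimal $\mathfrak{h}_0$-action, which the paper leaves implicit).
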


\begin{proof}
We use the implicit function theorem for Hilbert manifolds. When $\omega$ is a zero of $\F$, it is also a zero of $\mu$, so that $\widetilde{d\F}(0)=0$. 

Let $l\geq \frac{\dim(M)}{2}+12$ be an integer. From Proposition \ref{prop:dF}, the differential at $0$ of $\widetilde{d\F}$ evaluated at $\psi\in H^{l}_0(M)$ is given, by 
\begin{equation}
d_0\widetilde{d\F}(\psi) = 2D^*D\psi - \int_M 2D^*D\psi \dvol =  2D^*D\psi,
\end{equation}
where $D^*D$ is extended by continuity to an operator $H^l_0(M)\rightarrow H^{l-12}_0(M)$ still denoted by $D^*D$. 

By Proposition \ref{prop:Dphi}, the self-adjoint operator $D^*D$ is elliptic with smooth coefficients. So that, $\ker(D^*D)\subset C^{\infty}_0(M)$ is finite dimensional and for $s\in \Z$ we have the isomorphism of Hilbert space :
$$ H^{s}_0(M)\cong D^*D(H^{s+12}_0(M))\oplus \ker(D^*D),$$
the summands are orthogonal for the $L^2$-product and the projections are continuous. So that the differential of $\widetilde{d\F}$ has finite dimensional kernel $\ker(D^*D)$ and image equal to $D^*D(H_0^l(M))$. This is enough to use the implicit function theorem on Hilbert spaces and concludes that $\widetilde{d\F}^{-1}(\ker(D^*D))$ is a submanifold of $\widetilde{U}$ whose tangent space is isomorphic to $\ker(D^*D)$.

To conclude the proof, just consider $\mathcal{O}$ the $H_0(M,J)$-orbit of $\omega$ and the intersection $\mathcal{O}\cap U$ seen as a subset of $\widetilde{U}$. Any point in $\mathcal{O}\cap U$ satisfy the condition (C). By Proposition \ref{prop:Dphi}, the tangent space of $\mathcal{O}\cap U$ at some point $\phi$ contains $\ker((D^{\phi})^*D^{\phi})\cong \ker(D^*D)$. Moreover, $\widetilde{d\F}$ is constant on $\mathcal{O}\cap U$. So that $\widetilde{d\F}^{-1}(\ker(D^*D))=\widetilde{d\F}^{-1}(0)=\mathcal{O}\cap U$. The proof is over.
\end{proof}

\begin{proof}[Proof of Theorem \ref{theorprinc:zeroF}]
Since a K\"ahler manifold with non-negative Ricci tensor everywhere satisfies the Condition C, we can use the above Theorem \ref{theor:loctheor1}. So that, there exists a neighbourhood $U$ of $\omega$ in $\MOm$ such that $(\widetilde{d\F})^{-1}(0)=U\cap \mathcal{O}$. Then, the zeroes of $\F$ restricted to $U$ are elements of $\mathcal{O}$.
\end{proof}

%%%%%%%%%%%%%%%%%%%%%%%%%%%%%%%%%%%%%%%%%%%%%%%%%%%%%%%%%%%%%%%%%%%%%%%%%%%%%%%%%%%%%%%%%%%%%%%%%%%%%%%%

\subsection{Critical points}

%%%%%%%%%%%%%%%%%%%%%%%%%%%%%%%%%%%%%%%%%%%%%%%%%%%%%%%%%%%%%%%%%%%%%%%%%%%%%%%%%%%%%%%%%%%%%%%%%%%%%%%%

We compute $d\F$ from a different point of view as in Proposition \ref{prop:dF} to obtain an equation for critical points of $\F$ similar to the one of extremal K\"ahler metric. This equation was already pointed out in Fox's paper \cite{Fox} but in different settings.

\begin{theorem} \label{theor:critF}
Let $(M,\omega,J)$ be a K\"ahler manifold satisfying the Condition C. Then, the form $\omega$ is a critical point of $\F$ if and only if $\Lr_{X_{\mu(\nabla)}}\nabla=0$, that is, if and only if the Hamiltonian vector field $X_{\mu(\nabla)}$ is a Killing vector field.
\end{theorem}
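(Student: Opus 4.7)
The plan is to recompute $d\F_\omega(\psi)$ for $\psi\in T_\omega\MOm\simeq C^\infty_0(M)$ from the moment map standpoint, rather than through the operator $D$ as in Proposition \ref{prop:dF}, so that the Lie derivative $\Lr_{X_{\mu(\nabla)}}\nabla$ appears directly in the formula. The goal is to establish the identity
$$d\F_\omega(\psi)\;=\;-2\,(\LC^*\Omega^\E)_J\bigl(\Lr_{X_{\mu(\nabla)}}J,\,J\Lr_{X_\psi}J\bigr),$$
after which Condition C (in the equivalent form noted after Lemma \ref{lemme:Iinv}) and Remark \ref{rem:LJLN} will conclude both directions.

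First I would apply the Moser trick of Section \ref{sect:cal}: letting $f_t$ be the path of diffeomorphisms with $f_t^*\omega_{t\psi}=\omega$ and $J_t:=f_t^{-1}.J\in\J_{int}(M,\omega)$, Lemma \ref{lemme:main} gives $f_t^*\mu^{t\psi}(\nabla^{t\psi})=\mu(\nabla^{J_t})$, so
$$\F(\omega_{t\psi})=\int_M\bigl(\mu(\nabla^{J_t})\bigr)^2\,\dvol,$$
and the volume form no longer moves with $t$. Differentiating at $t=0$ yields
$$d\F_\omega(\psi)=2\int_M\mu(\nabla)\,\Bigl.\tfrac{d}{dt}\Bigr|_0\mu(\nabla^{J_t})\,\dvol.$$
Since $\mu(\nabla)\in C^\infty_0(M)$ and the curve $t\mapsto \nabla^{J_t}$ lies in $\E(M,\omega)$, the defining property (\ref{eq:momentmu}) of the moment map (applied with $F=\mu(\nabla)$ and the tangent vector $A=\tfrac{d}{dt}|_0\nabla^{J_t}$, noting $\tilde\mu$ and $\mu$ differ by a constant) gives
$$d\F_\omega(\psi)=2\,\Omega^\E_\nabla\bigl(\Lr_{X_{\mu(\nabla)}}\nabla,\;A\bigr).$$

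Next I would identify $A$ geometrically. From the computation already carried out in the proof of Proposition \ref{prop:Dphi} (point 2), $\tfrac{d}{dt}|_0 J_t = -J\Lr_{X_\psi}J$, using integrability of $J$. Hence $A=\LC_{*J}(-J\Lr_{X_\psi}J)$. On the other hand the equivariance of $\LC$ (Lemma \ref{lemme:LCequiv}), differentiated, gives $\Lr_{X_{\mu(\nabla)}}\nabla=\LC_{*J}(\Lr_{X_{\mu(\nabla)}}J)$. Feeding both identifications into the previous display produces the advertised formula
$$d\F_\omega(\psi)=-2\,(\LC^*\Omega^\E)_J\bigl(\Lr_{X_{\mu(\nabla)}}J,\,J\Lr_{X_\psi}J\bigr).$$

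With this formula in hand, the theorem is immediate. For the ``if'' direction, $\Lr_{X_{\mu(\nabla)}}\nabla=0$ implies $\Lr_{X_{\mu(\nabla)}}J=0$ by Remark \ref{rem:LJLN}, so the right-hand side vanishes for every $\psi$ and $\omega$ is critical. For the ``only if'' direction, if $\omega$ is critical then the right-hand side vanishes for all $\psi\in C^\infty_0(M)$; applying Condition C in the equivalent form given after Lemma \ref{lemme:Iinv}, with $F=\mu(\nabla)$ and $H=\psi$, forces $\Lr_{X_{\mu(\nabla)}}J=0$, hence $\Lr_{X_{\mu(\nabla)}}\nabla=0$ again by Remark \ref{rem:LJLN}. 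Since $X_{\mu(\nabla)}$ is already symplectic (Hamiltonian) and now also preserves $J$, it preserves the K\"ahler metric $g=\omega(\cdot,J\cdot)$, i.e.\ it is Killing. The main obstacle is purely bookkeeping---keeping the Moser pullback, the differential of the action on $J$, and the sign conventions consistent with those used in Section \ref{subsect:lcmap}---but no new analytic input beyond Condition C is needed.
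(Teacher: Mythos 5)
Your proposal is correct and follows essentially the same route as the paper: the paper first establishes the first-variation formula (Proposition \ref{prop:firstvarF}) via the Moser trick, Lemma \ref{lemme:main}, the moment map property of $\mu$, and the identification $\frac{d}{dt}J_t=-J_t\Lr_{X_{f_t^*\dot\phi}}J_t$, and then applies Condition C exactly as you do. The only cosmetic difference is that the paper records the variation along the whole path $t\mapsto\omega_{\phi(t)}$ rather than only at $t=0$.
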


\noindent Let us compute the first variation of $\F(\omega_{\phi})$.

\begin{prop} \label{prop:firstvarF}
Let $\omega_{\phi(t)}$ be a smooth path in $\MOm$ with $\phi(0)=0$.
Then, 
$$\frac{d}{dt} \F(\omega_{\phi(t)})= - 2 \Omega^{\E}\left(\LC_{*J_t}(\Lr_{X_{\mu(\nabla^{J_t})}}J_t),\LC_{*J_t}(J_t\Lr_{X^{\omega}_{f_t^*\dot{\phi}}}J_t)\right).$$
\end{prop}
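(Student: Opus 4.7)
The plan is to transfer the computation from the family of K\"ahler forms $\omega_{\phi(t)}$ on the fixed complex manifold $(M,J)$ to a family of complex structures $J_t$ on the fixed symplectic manifold $(M,\omega)$, where the moment map property of $\mu$ on $\E(M,\omega)$ applies directly.

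First, I would use the Moser relation (\ref{eq:Moser}), which says $f_t^*\omega_{\phi(t)}=\omega$, to pull back the integrand defining $\F(\omega_{\phi(t)})$ under $f_t$. Combined with Lemma \ref{lemme:main}, which identifies $f_t^*\mu^{\phi(t)}(\nabla^{\phi(t)})$ with $\mu(\nabla^{J_t})$, this gives the key rewriting
\[
\F(\omega_{\phi(t)}) \;=\; \int_M \bigl(\mu(\nabla^{J_t})\bigr)^{2}\,\dvol,
\]
in which the volume form no longer depends on $t$. Differentiating under the integral yields $\frac{d}{dt}\F(\omega_{\phi(t)})=2\int_M \mu(\nabla^{J_t})\,\frac{d}{dt}\mu(\nabla^{J_t})\,\dvol$. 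Since $\mu(\nabla^{J_t})\in C^{\infty}_0(M)$, the defining property (\ref{eq:momentmu}) of the moment map, evaluated at $\nabla=\nabla^{J_t}$ with test function $F=\mu(\nabla^{J_t})$ and tangent vector $A=\frac{d}{dt}\nabla^{J_t}$, identifies this integral with $2\,\Omega^{\E}(\Lr_{X_{\mu(\nabla^{J_t})}}\nabla^{J_t},\,\frac{d}{dt}\nabla^{J_t})$.

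It remains to rewrite both slots through $\LC_{*J_t}$. The chain rule applied to $\nabla^{J_t}=\LC(J_t)$ gives $\frac{d}{dt}\nabla^{J_t}=\LC_{*J_t}(\frac{d}{dt}J_t)$, and differentiating the equivariance relation of Lemma \ref{lemme:LCequiv} along the Hamiltonian flow of $\mu(\nabla^{J_t})$ yields $\Lr_{X_{\mu(\nabla^{J_t})}}\nabla^{J_t}=\LC_{*J_t}(\Lr_{X_{\mu(\nabla^{J_t})}}J_t)$. Finally, the identity
\[
\frac{d}{dt}J_t \;=\; -\,J_t\,\Lr_{X^{\omega}_{f_t^*\dot\phi}}J_t,
\]
which is exactly the computation already performed inside the proof of Proposition \ref{prop:Dphi} (starting from $X_t=-\grad^{\phi(t)}\dot\phi$ and $J_t=f_{t*}^{-1}J f_{t*}$), produces the overall minus sign together with the second factor in the statement, finishing the proof.

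\textbf{Main obstacle.} The only step that is not formal is the identity $\frac{d}{dt}J_t=-J_t\,\Lr_{X^{\omega}_{f_t^*\dot\phi}}J_t$, which is the infinitesimal Moser correspondence between a K\"ahler potential variation $\dot\phi$ and a variation of compatible complex structures. Because $X_t$ is time-dependent, its verification requires careful bookkeeping when differentiating $J_t=f_{t*}^{-1}Jf_{t*}$ along the flow of $X_t$ and then translating $f_{t*}^{-1}X_t$ back in terms of a Hamiltonian vector field for $\omega$. This computation is already available in the proof of Proposition \ref{prop:Dphi}, so once it is imported the present proof is simply the assembly of Lemma \ref{lemme:main}, Lemma \ref{lemme:LCequiv}, and the moment map identity (\ref{eq:momentmu}).
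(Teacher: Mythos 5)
Your proposal is correct and follows essentially the same route as the paper: pull back by the Moser diffeomorphisms $f_t$ and apply Lemma \ref{lemme:main} to rewrite $\F(\omega_{\phi(t)})=\int_M(\mu(\nabla^{J_t}))^2\dvol$ over the fixed volume form, invoke the moment map identity with test function $\mu(\nabla^{J_t})$, and then convert both arguments via $\LC_{*J_t}$ using the equivariance of $\LC$ and the identity $\frac{d}{dt}J_t=-J_t\Lr_{X^{\omega}_{f_t^*\dot\phi}}J_t$. This matches the paper's proof step for step, including the final importation of the infinitesimal Moser computation from the proof of Proposition \ref{prop:Dphi}.
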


\begin{proof}
Let $f_t \in \Diff_0(M)$ defined as in equation (\ref{eq:Moser}). With the help of the Lemma \ref{lemme:main}, we compute : 
\begin{eqnarray*}
\frac{d}{dt} \F(\omega_{\phi(t)}) & = & \frac{d}{dt} \int_M (\mu(\nabla^{\phi(t)}))^2\frac{\omega_{\phi(t)}^n}{n!},\\
 & = & \frac{d}{dt} \int_M (f_t^*\mu(\nabla^{\phi(t)}))^2\dvol, \\
 & = & \frac{d}{dt} \int_M (\mu(\nabla^{J_t})))^2\dvol, \\
 & = &  2\Omega^{\E}\left(\Lr_{X_{\mu(\nabla^{J_t})}}\nabla^{J_t},\frac{d}{dt}\nabla^{J_t}\right).
\end{eqnarray*}
where the last equality follows from the fact that $\mu$ is a moment map on $\E(M,\omega)$.

Now, $\nabla^{J_t}=\LC(J_t)$ and $J_t=f_t^{-1}.J:=f_{t*}^{-1}Jf_{t*}$. So that, $\frac{d}{dt}J_t=\Lr_{f_{t*}^{-1}X_t}J_t$ and $f_{t*}^{-1}X_t=-J_tX^{\omega}_{f_t^*\dot{\phi}}$. Then, 
$$\frac{d}{dt} \F(\omega_{\phi(t)})= -2 \Omega^{\E}\left(\LC_{^*J_t}(\Lr_{X_{\mu(\nabla^{J_t})}}J_t),\LC_{^*J_t}(J_t\Lr_{X_{f_t^*\dot{\phi}}}J_t)\right).$$
\end{proof}

\begin{proof}[Proof of Theorem \ref{theor:critF}]
We assume $(M,\omega,J)$ satisfies Condition C. The form $\omega$ is a critical point if and only if for all $H\in C^{\infty}_0(M)$ we have $\frac{d}{dt}|_0\F(\omega_{tH})=0$. By proposition \ref{prop:firstvarF}, this means 
$$\Omega^{\E}\left(\LC_{^*J}(\Lr_{X_{\mu(\nabla)}}J),\LC_{^*J}(J\Lr_{X_{H}}J)\right)=0$$
for all $H\in C^{\infty}(M)$. Now, Condition C implies that $\Lr_{X_{\mu(\nabla)}}J=0$.
\end{proof}

\begin{cor}
Let $(M,\omega,J)$ be a closed K\"ahler manifold which is Ricci flat. Then $\omega$ is a critical point of $\F$ if and only if it is a zero of $\F$.
\end{cor}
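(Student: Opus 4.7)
The plan is to treat the two implications separately. The direction ``zero of $\F$ $\Rightarrow$ critical point'' requires no assumption on the curvature: $\F$ is manifestly non-negative, so any zero is an absolute minimum of the smooth functional and hence automatically a critical point.

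For the converse, I will combine Theorem \ref{theor:critF} with the classical Bochner vanishing theorem. First, since $(M,\omega,J)$ is Ricci-flat, Proposition \ref{prop:omeganegative} yields Condition C, so Theorem \ref{theor:critF} is available: at a critical point $\omega$ of $\F$, the Hamiltonian vector field $X_{\mu(\nabla)}$ is a Killing vector field of the underlying Ricci-flat K\"ahler metric.

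The key step is then Bochner's theorem, which says that on a closed Ricci-flat Riemannian manifold every Killing vector field is parallel; in particular $\nabla X_{\mu(\nabla)} = 0$. Combined with $\nabla \omega = 0$, this implies that the $1$-form $d\mu(\nabla) = i_{X_{\mu(\nabla)}}\omega$ is itself parallel, hence both closed and co-closed, i.e.\ harmonic. A harmonic exact $1$-form on a closed manifold vanishes (it is $L^2$-orthogonal to itself), so $\mu(\nabla)$ is constant on $M$, and the normalisation $\mu(\nabla)\in C^{\infty}_0(M)$ then forces $\mu(\nabla)\equiv 0$. Therefore $\F(\omega)=\int_M \mu(\nabla)^2 \dvol = 0$. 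I do not anticipate any real obstacle here: the argument merely chains Theorem \ref{theor:critF}, Bochner vanishing on closed Ricci-flat manifolds, and the Hodge-theoretic triviality that an exact harmonic form is zero.
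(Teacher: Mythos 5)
Your proof is correct and follows essentially the same route as the paper: apply Proposition \ref{prop:omeganegative} to get Condition C, invoke Theorem \ref{theor:critF} to conclude $X_{\mu(\nabla)}$ is Killing, and deduce $\mu(\nabla)\equiv 0$ from Ricci-flatness. In fact you supply the Bochner--Hodge argument (Killing $\Rightarrow$ parallel $\Rightarrow$ $d\mu(\nabla)$ exact and harmonic $\Rightarrow$ zero) that the paper merely asserts in the sentence ``On a Ricci flat manifold $\Lr_{X_{\mu(\nabla)}}J=0$ implies $\mu(\nabla)=0$,'' so your write-up is a slightly more complete version of the same proof.
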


\begin{proof}
A Ricci flat K\"ahler manifold satisfies the Condition C. If $\omega$ is a critical point of $\F$, then $\Lr_{X_{\mu(\nabla)}}J=0$, by Theorem \ref{theor:critF}. On a Ricci flat manifold $\Lr_{X_{\mu(\nabla)}}J=0$ implies $\mu(\nabla)=0$ (the function $\mu$ being normalized), so that $\F(\omega)=0$.
\end{proof}

\section{Deformation quantization}

We will now provide a new motivation for the study of various moment maps on infinite dimensional symplectic manifolds. We exhibit a link between the moment map $\mu$ on the space of symplectic connections and the trace density of the Fedosov's star products. 

Moreover, we also mention other examples of star products, for which the trace density is linked to moment maps on infinite dimensional symplectic manifolds.

\subsection{Closed Fedosov's star products}

The moment map $\mu$ and its zeroes do have a nice interpretation in terms of Fedosov's star products.

Consider the space $C^{\infty}(M)[[\nu]]$ of formal power series of smooth functions. 
A star product \cite{BFFLS} on a symplectic manifold is a $\R[[\nu]]$-bilinear map
$$*:C^{\infty}(M)[[\nu]]\times C^{\infty}(M)[[\nu]] \rightarrow C^{\infty}(M)[[\nu]]:(F,H)\mapsto F*H=\sum_{r=0} \nu^r C_r(F,H),$$
such that : 
\begin{itemize}
\item $*$ is associative,
\item the $C_r$ are $\R[[\nu]]$-linear bidifferential operators,
\item $C_0(F,H)=FH$ and $C_1^-(F,H):= C_1(F,H)-C_1(H,F)=\{F,H\}$ where $\{F,H\}:=-\omega(X_F,X_H)$, for $F, H \in C^{\infty}(M)$,
\item $F*1=F=1*F$, for all $F \in C^{\infty}(M)$.
\end{itemize}

In \cite{fed2}, Fedosov gave a geometric construction of star products $*_{\nabla,\Omega}$ on symplectic manifold using a symplectic connection $\nabla$ and a formal series of closed $2$-forms $\Omega\in \nu \Omega^2(M)[[\nu]]$. In this subsection, we only consider Fedosov's star products obtained with $\Omega=0$. Concretely, the first terms up to order $3$ in $\nu$ are described by the formula
\begin{equation}
F*_{\nabla,0}H=FH+\frac{\nu}{2} \{F,H\}+ \frac{\nu^2}{4} \Lambda^{i_1j_1}\Lambda^{i_2j_2}(\nabla^2F)_{i_1i_2}(\nabla^2H)_{j_1j_2} + \frac{\nu^3}{48}S^3_{\nabla}(F,H) + O(\nu^4),
\end{equation}
where $F,H \in C^{\infty}(M)$ and, denoting by $\underline{\Lr_{X_F}\nabla}$ the symmetric $3$-tensor $\omega(\Lr_{X_F}\nabla(\cdot)\cdot,\cdot)$, 
$$S^3_{\nabla}(F,H):=\Lambda^{i_1j_1}\Lambda^{i_2j_2}\Lambda^{i_3j_3}(\underline{\Lr_{X_F}\nabla})_{i_1i_2i_3}(\underline{\Lr_{X_H}\nabla})_{j_1j_2j_3},$$
One checks that the $*_{\nabla,0}$-commutator is given by 
$$[F,H]_{*_{\nabla,0}}=F*_{\nabla,0}H-H*_{\nabla,0}F=\nu \{F,H\}+\frac{\nu^3}{24}S^3_{\nabla}(F,H) + O(\nu^4),$$

\begin{rem}
In \cite{gr3} the notion of ``natural'' star products was introduced. Such natural star products determine the symplectic connection. Star products obtained via the Fedosov's method are natural in the sense of \cite{gr3}.
\end{rem}

Let $*$ be a star product on a symplectic manifold.
A {\bf trace} for $*$ is a $\R[[\nu]]$-linear map
$$\tr : C^{\infty}_{c}(M)[[\nu]] \rightarrow \R[[\nu]],$$
satisfying $\tr(F*H)=\tr(H*F)$ for all $F,H \in C^{\infty}_{c}(M)[[\nu]]$, where $C^{\infty}_{c}(M)$ denotes the space of smooth functions with compact support.

\begin{prop}[Fedosov \cite{fed3,fed}, Nest-Tsigan \cite{NT}, Gutt-Rawnsley \cite{gr}] \label{prop:existtrace} 
Any star product $*$ on a symplectic manifold $(M,\omega)$ admit a trace. More precisely, 
there exists $\rho \in C^{\infty}(M)[[\nu]]$ such that
\begin{equation}
\tr(F):= \int_M F\rho \dvol
\end{equation}
for all $F \in C^{\infty}_{c}(M)[[\nu]]$. The function $\rho$ is called a trace density.

Moreover, any two traces for $*$ differ from each other by multiplication with a formal constant $C \in \R[\nu^{-1},\nu]]$.
\end{prop}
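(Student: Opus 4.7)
The proof naturally splits into existence (via a local Moyal model plus gluing) and uniqueness (from leading-order rigidity of traces). I would tackle uniqueness first, as it is already needed for the gluing argument used in existence.

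\textbf{Uniqueness.} The crucial input is a leading-order rigidity lemma: on a connected symplectic manifold, any $\R$-linear functional $T : C^\infty_c(M)\to \R$ that vanishes on all Poisson brackets $\{C^\infty_c(M), C^\infty_c(M)\}$ is a constant multiple of $F\mapsto \int_M F\, \omega^n/n!$. Indeed, the identity $\{F,G\}=X_F(G)$ means $T$, seen as a distribution, is invariant under all compactly supported Hamiltonian isotopies; these act transitively on a connected symplectic manifold, forcing $T$ to be a multiple of the Liouville volume. Given two traces $\tau,\tau'$, the plan is to find $c\in \R[\nu^{-1},\nu]]$ order by order so that $\tau-c\tau'\equiv 0$. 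Writing $\sigma:=\tau-c\tau'=\sum_{r\geq r_0}\nu^r \sigma_r$, the trace condition $\sigma\bigl([F,H]_*\bigr)=0$ together with $[F,H]_* = \nu\{F,H\} + O(\nu^2)$ forces $\sigma_{r_0}$ to vanish on Poisson brackets, hence equals a constant multiple of integration; choosing the next Laurent coefficient of $c$ appropriately kills this leading term and one iterates.

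\textbf{Existence.} I would build a trace locally on Darboux charts and then patch. On a contractible Darboux chart $U\simeq \R^{2n}$, every star product is formally equivalent to the Moyal--Weyl product $*_M$ via some $T=\mathrm{id}+O(\nu)$. A direct computation from the Moyal formula shows that every positive-order bidifferential term in $F*_M H$ is a total divergence, so Lebesgue integration is a trace for $*_M$. Pulling back through $T$ gives a local trace $\tau_U$ on $*|_U$ of the required form $F\mapsto \int_U F \rho_U\, \omega^n/n!$ with $\rho_U\in C^\infty(U)[[\nu]]$. On overlaps, uniqueness implies that two such local traces differ by a formal Laurent constant in $\nu$, so after a global rescaling and a partition-of-unity construction one assembles a single trace density $\rho\in C^\infty(M)[[\nu]]$ with $\tr(F)=\int_M F\rho\, \omega^n/n!$.

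\textbf{Main obstacle.} The rigidity lemma is the technical heart. The delicate step is passing from ``invariance under all compactly supported Hamiltonian flows'' to ``scalar multiple of the Liouville volume'', which amounts to showing that $\{C^\infty_c(M), C^\infty_c(M)\}$ coincides with the kernel of the functional $\int_M \cdot\, \omega^n/n!$ on a connected symplectic manifold. One reduces this to $\R^{2n}$ by a partition-of-unity argument and Darboux charts; on $\R^{2n}$ any partial derivative can be realised as a Poisson bracket with a bump function times a coordinate function, so that every compactly supported function of integral zero lies in the image of the Poisson bracket. A secondary technical point, resolved implicitly above, is that the leading-order coefficient $\sigma_{r_0}$ is a priori merely a linear functional: its representability as integration against a smooth density follows from the local Moyal trace, which is manifestly given by a smooth density.
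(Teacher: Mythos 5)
The paper does not actually prove this proposition: it is quoted from Fedosov, Nest--Tsygan and Gutt--Rawnsley, so there is no internal argument to compare yours with. Your overall strategy (local Moyal model for existence, leading-order rigidity for uniqueness) is the standard one, and the uniqueness half is essentially sound: the key fact that on a connected symplectic manifold the span of Poisson brackets of compactly supported functions is exactly the kernel of $F\mapsto\int_M F\,\omega^n/n!$ is correct, your sketch of it (derivatives realised as brackets with truncated coordinate functions on $\R^{2n}$, then a chart-chain argument using connectedness) is the right one, and the order-by-order elimination of $\sigma_{r_0}$ converges $\nu$-adically.

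The genuine gap is in the gluing step of the existence argument. First, a partition of unity cannot assemble a trace out of local traces: $\sum_i\tau_i(\chi_i[F,H]_*)$ has no reason to vanish, because $\chi_i[F,H]_*$ is not a star-commutator of functions supported in $U_i$; the trace property is not local in that sense, so you genuinely need the local densities to agree on overlaps. Second, the ``global rescaling'' is obstructed. Normalising each $\rho_U=1+O(\nu)$ (which your rigidity lemma permits at order $\nu^0$ only), the overlap constants $c_{UV}$ with $\rho_U=c_{UV}\rho_V$ lie in $1+\nu\R[[\nu]]$ and form a \v{C}ech $1$-cocycle for a constant sheaf of groups; a global rescaling exists iff its class in $H^1(M;1+\nu\R[[\nu]])\cong \mathrm{Hom}(\pi_1(M),1+\nu\R[[\nu]])$ vanishes, which is not automatic for non-simply-connected $M$. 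Proving that the specific cocycle produced by local traces is a coboundary is essentially equivalent to the existence theorem itself, and this is exactly why the cited proofs do not glue abstract local traces: they either produce a canonical, locally computable density (Fedosov's trace via the Weyl bundle, Karabegov's canonical normalisation, which removes the additive-constant ambiguity in each $\rho_{U,k}$ and makes the densities match on overlaps for free), or they compute cyclic/Hochschild homology (Nest--Tsygan), or they correct $\rho=1+\nu\rho_1+\cdots$ globally order by order, which needs a cohomological vanishing argument you have not supplied. As written, your existence argument is complete only when $H^1(M;\R)=0$, in which case the rescaled densities glue directly and no partition of unity is needed.
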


A star product is called closed up to order $l$ in $\nu$ if the map $F\mapsto \int_M F \dvol$ satisfies the trace property
up to order $l$ in $\nu$, i.e.
\begin{equation}
\int_M F*H \dvol = \int_M H*F \dvol + O(\nu^{l+1}), \  \textrm{ for all } F,H \in C^{\infty}_{c}(M)[[\nu]].
\end{equation}

Fedosov \cite{fed4} gives an algorithm to compute the trace density of the star product $*_{\nabla,0}$. He computes explicitely the first non trivial term. Here we prove that the equivariant moment map property of $\mu$ implies that $\mu(\nabla)$ is the first non trivial term of a trace density for $*_{\nabla,0}$. 

\begin{prop}[Fedosov \cite{fed4}] \label{prop:trace}
Let $(M,\omega)$ be a closed symplectic manifold. If $F,H \in C^{\infty}(M)$, then $\rho:= 1+\frac{\nu^2}{24}\mu(\nabla)$satisfies 
$$\int_M (F*_{\nabla,0}H-H*_{\nabla,0}F)\rho \dvol \equiv 0 \textrm{ mod } O(\nu^4).$$

Consequently, $*_{\nabla,0}$ is closed up to order $3$ in $\nu$ if and only if $\nabla$ is a zero of the normalised moment map $\mu$.
\end{prop}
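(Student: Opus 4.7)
The plan is a direct term-by-term expansion of $\int_M [F,H]_{*_{\nabla,0}}\rho\, \dvol$ modulo $O(\nu^4)$, using the commutator supplied in the text,
\[
[F,H]_{*_{\nabla,0}} = \nu\{F,H\} + \frac{\nu^3}{24}S^3_\nabla(F,H) + O(\nu^4),
\]
where the $\nu^2$ term drops out by symmetry of $\nabla^2$. Multiplying by $\rho = 1 + \frac{\nu^2}{24}\mu(\nabla)$ and integrating, the order-$\nu$ coefficient $\int_M \{F,H\}\, \dvol$ vanishes because $X_F$ preserves $\frac{\omega^n}{n!}$, while the order-$\nu^2$ coefficient is zero since the commutator has no $\nu^2$ term. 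What remains at order $\nu^3$ is
\[
\frac{1}{24}\int_M S^3_\nabla(F,H)\, \dvol \;+\; \frac{1}{24}\int_M \mu(\nabla)\{F,H\}\, \dvol,
\]
so the entire statement reduces to showing these two integrals are negatives of one another.

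The crucial identity is $\int_M S^3_\nabla(F,H)\, \dvol = -\int_M \mu(\nabla)\{F,H\}\, \dvol$. Comparing the definition of $S^3_\nabla$ with the formula for $\Omega^{\E}$ recalled in the remark following Theorem \ref{theor:momentE}, one reads off
\[
\int_M S^3_\nabla(F,H)\, \dvol = \Omega^{\E}_\nabla(\Lr_{X_F}\nabla,\Lr_{X_H}\nabla).
\]
To evaluate this pairing I would invoke equivariance of the Cahen--Gutt moment map. By (\ref{eq:momentmu}) the function $\tilde{\mu}^F(\nabla) := \int_M \tilde{\mu}(\nabla)F\, \dvol$ on $\E(M,\omega)$ is the Hamiltonian of $\Lr_{X_F}\nabla$; combining the equivariance identity $\{\tilde{\mu}^F,\tilde{\mu}^H\}_{\E} = \tilde{\mu}^{\{F,H\}}$ with the sign convention $\{f,g\}_{\E} = -\Omega^{\E}(X_f,X_g)$ inherited from $\{F,H\} = -\omega(X_F,X_H)$ yields $\Omega^{\E}_\nabla(\Lr_{X_F}\nabla,\Lr_{X_H}\nabla) = -\int_M \tilde{\mu}(\nabla)\{F,H\}\, \dvol = -\int_M \mu(\nabla)\{F,H\}\, \dvol$ (the constant $\mu_0$ drops out since $\int_M \{F,H\}\, \dvol = 0$). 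Equivariance itself follows from the naturality $\tilde{\mu}(\varphi.\nabla) = \varphi_*\tilde{\mu}(\nabla)$ for $\varphi \in \symp(M,\omega)$, which is immediate from $\tilde{\mu}$ being a scalar invariant built from $\omega$ and the curvature of $\nabla$.

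For the consequence on closedness, repeat the same computation with $\rho \equiv 1$: every term up to order $\nu^3$ vanishes except $\frac{\nu^3}{24}\int_M S^3_\nabla(F,H)\, \dvol = -\frac{\nu^3}{24}\int_M \mu(\nabla)\{F,H\}\, \dvol$. Hence $*_{\nabla,0}$ is closed up to order $3$ in $\nu$ iff $\int_M \mu(\nabla)\{F,H\}\, \dvol = 0$ for all $F,H \in C^\infty(M)$. Rewriting this as $\int_M \{\mu(\nabla),F\}H\, \dvol$ by cyclicity and varying $H$ forces $\{\mu(\nabla),F\} = 0$ for every $F$, so $\mu(\nabla)$ is constant on the connected closed $M$, and the normalisation $\int_M \mu(\nabla)\, \dvol = 0$ then forces $\mu(\nabla) \equiv 0$. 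The only real pitfall is the sign bookkeeping in passing from the moment map defining equation (\ref{eq:momentmu}) to the equivariance identity for $\Omega^{\E}(\Lr_{X_F}\nabla,\Lr_{X_H}\nabla)$; once the convention is fixed, the rest is mechanical.
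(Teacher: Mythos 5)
Your proof is correct and follows essentially the same route as the paper: expand the commutator against $\rho$, identify $\int_M S^3_{\nabla}(F,H)\dvol$ with $\Omega^{\E}_{\nabla}(\Lr_{X_F}\nabla,\Lr_{X_H}\nabla)$ via the Cahen--Gutt expression for $\Omega^{\E}$, and cancel it against $\int_M \mu(\nabla)\{F,H\}\dvol$ using the equivariant moment map property. You are in fact more explicit than the paper on the ``consequently'' clause (that $\int_M \mu(\nabla)\{F,H\}\dvol=0$ for all $F,H$ forces $\mu(\nabla)$ to be constant, hence zero by normalisation), and your caution about sign bookkeeping is well placed: that is the only delicate point, the two displayed identities in the paper's own proof being off from one another by exactly such a sign as written.
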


\begin{proof}
We compute 
$$\int_M [F,H]_{*_{\nabla,0}}\rho \dvol=\frac{\nu^3}{24}\left(\Omega^{\E}(\Lr_{X_F}\nabla,\Lr_{X_H}\nabla)+\int_M\{F,H\}\mu(\nabla)\dvol\right)+ O(\nu^4).$$
Because $\mu$ is an equivariant moment map on $\E(M,\omega)$, equation (\ref{eq:momentmu}) says : 
$$\Omega^{\E}(\Lr_{X_H}\nabla,\Lr_{X_F}\nabla)= - \int_M\{F,H\}\mu(\nabla)\dvol.$$
It concludes the proof.
\end{proof}

Let $(M,\omega,J)$ be a closed K\"ahler manifold. One associate naturally the Fedosov's star product $*_{\nabla,0}$ for $\nabla$ the Levi-Civita connection. Assume $\widetilde{\omega}\in \MOm$, with $\Theta:=[\omega]$, is such that $f^*\widetilde{\omega}=\omega$ for an $f\in H_0(M,J)$. Set $\widetilde{\nabla}$ the Levi-Civita connection of $\widetilde{\omega}$. One checks $\nabla=f^{-1}.\widetilde{\nabla}$. Because of that, see \cite{fed2}, the pull-back by $f^*$ gives an isomorphism of star product 
$$f^*:(C^{\infty}(M)[[\nu]],*_{\widetilde{\nabla},0})\stackrel{\cong}{\rightarrow}(C^{\infty}(M)[[\nu]],*_{\nabla,0}).$$
Our Theorem \ref{theorprinc:zeroF} translates in terms of Fedosov's star product in the following way.

\begin{theoremprinc} \label{theor:closed*Fed}
Let $(M,\omega,J)$ be a closed K\"ahler manifold with Levi-Civita connection $\nabla$ and Ricci tensor everywhere non negative. Assume the Fedosov's star product $*_{\nabla,0}$ is closed up to order $3$ in $\nu$.

Then, there exists an open neighbourhood $U$ of $\omega$ in $\MOm$ such that if $\widetilde{\omega}\in U$ gives rise
to the star product $*_{\widetilde{\nabla},0}$ closed up to order $3$ in $\nu$ then there is an $f\in H_0(M,J)$ inducing an isomorphism
$$f^*:(C^{\infty}(M)[[\nu]],*_{\widetilde{\nabla},0})\stackrel{\cong}{\rightarrow}(C^{\infty}(M)[[\nu]],*_{\nabla,0}).$$
\end{theoremprinc}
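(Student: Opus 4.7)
The plan is to reduce the statement to Theorem \ref{theorprinc:zeroF} using Proposition \ref{prop:trace} as the bridge between closedness of Fedosov's star product and the vanishing of the moment map $\mu$. First I would observe that by Proposition \ref{prop:trace}, the hypothesis that $*_{\nabla,0}$ is closed up to order $3$ in $\nu$ is exactly equivalent to $\mu(\nabla)=0$. Since $\F(\omega)=\int_M \mu(\nabla)^2\dvol$ and the integrand is pointwise non-negative, this forces $\F(\omega)=0$, i.e.\ $\omega$ is a zero of $\F$. The Ricci non-negativity hypothesis then puts us exactly in the setting of Theorem \ref{theorprinc:zeroF}, which yields an open neighbourhood $U$ of $\omega$ in $\MOm$ such that $\F^{-1}(0)\cap U$ coincides with $\mathcal{O}\cap U$, where $\mathcal{O}$ denotes the $H_0(M,J)$-orbit of $\omega$.

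Next, I would take any $\widetilde{\omega}\in U$ for which $*_{\widetilde{\nabla},0}$ is closed up to order $3$ in $\nu$, and apply Proposition \ref{prop:trace} at the point $\widetilde{\omega}$ (viewed now as the distinguished symplectic form on $M$, with symplectic connection $\widetilde{\nabla}$). This gives $\mu^{\widetilde{\omega}}(\widetilde{\nabla})=0$, hence $\F(\widetilde{\omega})=0$. By the characterisation from Theorem \ref{theorprinc:zeroF}, $\widetilde{\omega}\in \mathcal{O}\cap U$, so there exists $f\in H_0(M,J)$ with $f^*\widetilde{\omega}=\omega$, following the convention of Lemma \ref{lemme:main} used in the definition of the orbit.

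Finally, I would invoke the discussion immediately preceding the theorem: since $f\in H_0(M,J)$ preserves $J$ and satisfies $f^*\widetilde{\omega}=\omega$, the Levi-Civita connection of the metric $\omega(\cdot,J\cdot)$ is $\nabla = f^{-1}.\widetilde{\nabla}$, and the functoriality of Fedosov's construction (\cite{fed2}) turns the pull-back $f^*$ into an isomorphism of star products $(C^{\infty}(M)[[\nu]],*_{\widetilde{\nabla},0})\stackrel{\cong}{\rightarrow}(C^{\infty}(M)[[\nu]],*_{\nabla,0})$, as required. The main work of this theorem has already been done in Theorem \ref{theorprinc:zeroF}; the only substantive step here is the translation between a quantum condition (closedness of a star product) and a classical one (vanishing of the moment map on $\mathcal{E}(M,\omega)$), which is precisely what the equivariance of $\mu$ provides through Proposition \ref{prop:trace}. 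There is no genuine obstacle beyond making sure the two applications of that proposition (once at $\omega$, once at $\widetilde{\omega}$) are placed in the correct moduli: once we regard $\widetilde{\omega}$ as the fixed symplectic form in its own copy of $\E(M,\widetilde{\omega})$, the argument goes through verbatim.
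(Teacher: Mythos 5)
Your proposal is correct and follows essentially the same route as the paper: translate closedness up to order $3$ into $\mu(\nabla)=0$ via Proposition \ref{prop:trace}, deduce $\F(\omega)=0$ and $\F(\widetilde{\omega})=0$, apply Theorem \ref{theorprinc:zeroF} to place $\widetilde{\omega}$ in the $H_0(M,J)$-orbit, and conclude with the functoriality of Fedosov's construction discussed just before the theorem. The only difference is that you spell out the two applications of Proposition \ref{prop:trace} explicitly, which the paper leaves implicit.
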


\begin{proof}
Because $*_{\nabla,0}$ is closed up to order $3$, $\F(\omega)=0$. Now, we can use Theorem \ref{theorprinc:zeroF} which states that in a neighbourhood of $\omega$ in $\MOm$ the zeroes of $\F$ are the $H_0(M,J)$-orbit of $\omega$. As pointed out above, an element $f\in H_0(M,J)$ produces the desired isomorphism of star product algebra.
\end{proof}

\begin{ex}
Consider the flat torus $(\C^{n}/\Z^{2n}, \omega_{std},i)$ with its flat connection $\nabla$. The corresponding Fedosov's star product $*_{\nabla,0}$ is closed (i.e. the integral is a trace functional). Since $Ric^{\nabla}\equiv 0$, the group $H_0(M,J)=0$. By the above Theorem \ref{theor:closed*Fed}, $\omega_{std}$ is isolated from any other $\omega_{\phi} \in \mathcal{M}_{[\omega_{std}]}$ such that $*_{\nabla^{\phi},0}$ is closed. 
\end{ex}

\begin{ex}
Consider $(\C P^n, \omega_{FS},J)$ the complex projective space with its Fubini-Study metric $\omega_{FS}$ and standard $J$. Denote by $\nabla$ the K\"ahler connection, it is known that $*_{\nabla,0}$ is closed. Because $Ric^{\nabla}$ is positive definite at any point, Theorem \ref{theor:closed*Fed} states that in a neighbourhood $U$ of $\omega_{FS}$ in $\mathcal{M}_{[\omega_{FS}]}$ the only other closed Fedosov's star products $*_{\widetilde{\nabla},0}$ associated to $\widetilde{\omega}\in U$ are obtained by the pull-back by a transformation of $SL(n+1,\C)$.
\end{ex}

\subsection{Other closed star products and moment maps} \label{subsect:other*}

The link between trace densities for star products and moment maps on infinite dimensional symplectic manifolds in Proposition \ref{prop:trace} involves more star products than just the Fedosov's star products $*_{\nabla, 0}$. We give here two additional examples.

\subsubsection{Wick star products of Bordemann-Waldmann} \label{subsubsect:wick}

On a K\"ahler manifold, Bordemann and Waldmann \cite{BW} adapted the Fedosov's construction to produce star products of Wick type. That is star products defined by series of bidifferential operators $C_j$ for which the first argument is differentiated in holomorphic directions while the other is differentiated in anti-holomorphic directions (for their construction, the star product is defined on $C^{\infty}(M,\C)[[\nu]]$ and the $C_j$'s are $\C[[\nu]]$-bilinear).

\begin{theorem}[Bordemann-Waldmann \cite{BW}]
On the K\"ahler manifold $(M,\omega,J)$, there exists a star product of Wick type $*_{\omega,J}$.
\end{theorem}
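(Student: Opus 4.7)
The plan is to adapt the Fedosov construction fiberwise by replacing the symmetric (Moyal--Weyl) fiber product with a Wick-type fiber product adapted to the complex structure, and then check that the Chern/Levi-Civita connection is compatible enough with this structure to run the Fedosov iteration. Concretely, I would begin by complexifying the tangent bundle, $TM \otimes \C = T^{1,0}M \oplus T^{0,1}M$, and introducing the formal Wick--Weyl bundle $\W_{\mathrm{Wick}} = \prod_{k} S^k(T^{*1,0}M \oplus T^{*0,1}M)[[\nu]]$, equipped fiberwise with the product
\[
a \circ_{\mathrm{Wick}} b := \sum_{k\geq 0} \frac{(2\nu)^k}{k!}\, g^{\bar j_1 i_1}\cdots g^{\bar j_k i_k}\, \bigl(\partial^{(1,0)}_{i_1}\cdots\partial^{(1,0)}_{i_k} a\bigr)\cdot \bigl(\partial^{(0,1)}_{\bar j_1}\cdots\partial^{(0,1)}_{\bar j_k} b\bigr),
\]
where $g$ is the Kähler metric. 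Associativity of this fiber product is a finite-dimensional computation, and its antisymmetrization to first order in $\nu$ recovers the Poisson bracket of $\omega$.

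Next I would define the standard Fedosov derivations on $\Omega^\bullet(M,\W_{\mathrm{Wick}})$: the $-1$-graded operator $\delta$ (contraction against $dx^i$) and its inverse $\delta^{-1}$ on the complement of the harmonic part, together with $\nabla$ induced by the Chern connection. The essential structural point is that, because $(M,\omega,J)$ is Kähler, the Chern connection coincides with the Levi-Civita connection and preserves the decomposition $T^{1,0}M \oplus T^{0,1}M$; therefore $\nabla$ acts as a derivation of $\circ_{\mathrm{Wick}}$, whereas on a general symplectic manifold no such splitting is parallel.

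I would then seek a Fedosov connection of the form $D = -\delta + \nabla + \tfrac{1}{\nu}\operatorname{ad}_{\mathrm{Wick}}(r)$, with $r \in \Omega^1(M,\W_{\mathrm{Wick}})$ of total degree $\geq 3$ in the fiber variables, demanding flatness $D^2 = 0$. As in the classical Fedosov proof, this reduces to a fixed-point equation
\[
r = \delta^{-1}\!\left(R^{\nabla} + \nabla r + \tfrac{1}{\nu} r \circ_{\mathrm{Wick}} r\right),
\]
solvable by iteration on the filtration degree. Flatness implies that the kernel of $D$ on $\Gamma(\W_{\mathrm{Wick}})$ is in bijection, via the symbol map, with $C^\infty(M,\C)[[\nu]]$; transporting $\circ_{\mathrm{Wick}}$ through this bijection defines $*_{\omega,J}$, and bidifferentiality together with the Wick separation of holomorphic/antiholomorphic derivatives is automatic because each $C_r$ is built from contractions of $\partial^{(1,0)}$ on the first argument with $\partial^{(0,1)}$ on the second.

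The main obstacle is verifying that $D$ remains a derivation of $\circ_{\mathrm{Wick}}$ and that the fixed-point equation can be solved in a way that preserves the bigrading, since $\circ_{\mathrm{Wick}}$ is neither symmetric nor antisymmetric and a priori the usual Weyl-algebra cancellations used by Fedosov fail. The Kähler hypothesis enters here decisively: $\nabla J = 0$ forces the curvature $R^\nabla$ to lie in the subalgebra of endomorphisms commuting with the Wick bigrading, which is precisely what is needed to keep each step of the iteration inside $\W_{\mathrm{Wick}}$ and to obtain an associative, Wick-type $*_{\omega,J}$.
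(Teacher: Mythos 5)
This theorem is quoted from Bordemann--Waldmann \cite{BW} and the paper gives no proof of its own, only the citation and the remark that the construction is recursive. Your sketch reproduces exactly the strategy of the cited original --- a fiberwise Wick product built from the K\"ahler metric on the complexified formal Weyl bundle, the observation that the Levi-Civita (= Chern) connection is a derivation of it because it preserves the splitting $T^{1,0}M\oplus T^{0,1}M$, and the standard Fedosov fixed-point recursion for a flat connection $D$ --- so it is the same approach and the steps you leave as assertions (associativity of the fiber product, $\delta$ remaining quasi-inner for the Wick commutator, solvability of the recursion) are precisely the lemmas established in \cite{BW}.
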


Their proof is constructive in the sense that the $C_j$'s defining $*_{\omega,J}$ can be obtained recursively.
The product $F*_{\omega,J}H$ for $F,H \in C^{\infty}(M,\C)$ is described by the formula 
$$F*_{\omega,J}H:=FH+i\nu \Lambda^{\alpha \bar{\beta}}\partial_\alpha F \partial_{\bar{\beta}} H + \nu^2 \Lambda^{\alpha_1\bar{\beta}_1}\Lambda^{\alpha_2\bar{\beta}_2}(\nabla^2 F)_{\alpha_1\alpha_2}(\nabla^2 H)_{\bar{\beta}_1\bar{\beta}_2}+O(\nu^3),$$
where the above expression is written in local holomorphic coordinate $\{z_{\alpha}\}$ and their conjugates
$\{\overline{z_{\beta}}\}$.

On a closed K\"ahler manifold, one observe that for $F,H\in C^{\infty}(M,\R)$
$$\int_M [F,H]_{*_{\omega,J}} \omega^n=\frac{\nu^2}{4}i\Omega^{\J}_J(\Lr_{X_F}J,\Lr_{X_H}J) + O(\nu^3),$$
where $\Omega^{\J}_J$ is the symplectic form introduced in Subsection \ref{subsect:JMomega} on the space $\J(M,\omega)$ of almost complex structure compatible with $\omega$. Using the equivariant moment map property of the Hermitian scalar curvature of Equation (\ref{eq:scalmoment}), we give the first non-trivial term of a trace density for $*_{\omega,J}$. It was already computed by Karabegov \cite{Kara}.

\begin{prop}[Karabegov \cite{Kara}]
Let $Scal(J)$ be the scalar curvature of the K\"ahler manifold $(M,\omega,J)$.
If $F,H \in C^{\infty}(M)$, then $\rho:= 1+ \frac{\nu}{4}Scal(J)$ satisfies
$$\int_M (F*_{\omega,J}H-H*_{\omega,J}F)\rho \dvol \equiv 0 \textrm{ mod } O(\nu^3).$$

Consequently, $*_{\omega,J}$ is closed up to order $2$ in $\nu$ if and only if the K\"ahler manifold $(M,\omega,J)$ is of constant scalar curvature.
\end{prop}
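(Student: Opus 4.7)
The plan is to mirror the proof of Proposition \ref{prop:trace}, replacing the Cahen--Gutt moment map on $\E(M,\omega)$ with the Hermitian scalar curvature moment map on $\J(M,\omega)$. Two ingredients are already in hand: the identity
$$\int_M [F,H]_{*_{\omega,J}}\,\omega^n \;=\; \frac{i\nu^2}{4}\,\Omega^{\J}_J\bigl(\Lr_{X_F}J,\Lr_{X_H}J\bigr) + O(\nu^3),$$
recorded in the excerpt just above the statement, and the moment map equation (\ref{eq:scalmoment}) for the action of $\ham(M,\omega)$ on $\J(M,\omega)$.

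First I would decompose
$$\int_M [F,H]_{*_{\omega,J}}\,\rho\,\omega^n \;=\; \int_M [F,H]_{*_{\omega,J}}\,\omega^n \;+\; \frac{\nu}{4}\int_M [F,H]_{*_{\omega,J}}\,Scal(J)\,\omega^n.$$
The first integral is controlled directly by the boxed identity above. For the second, use the universal star product axiom $C_1(F,H)-C_1(H,F) = \{F,H\}$, which gives $[F,H]_{*_{\omega,J}} = \nu\{F,H\}+O(\nu^2)$, to reduce it modulo $O(\nu^3)$ to $\frac{\nu^2}{4}\int_M \{F,H\}\,Scal(J)\,\omega^n$.

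Next I would evaluate this Poisson-bracket integral via the moment map property. Take $J_t := \varphi_t^H.J$ where $\varphi_t^H$ is the Hamiltonian flow of $H$; then $\frac{d}{dt}\bigl|_0 J_t = \Lr_{X_H}J$, and since the Hermitian scalar curvature is natural under symplectic diffeomorphisms one has $Scal(J_t) = Scal(J)\circ(\varphi_t^H)^{-1}$. Substituting into (\ref{eq:scalmoment}) and integrating by parts (using $\int_M X_H(f)\,\omega^n = 0$) identifies the Poisson-bracket integral with $\Omega^{\J}_J(\Lr_{X_F}J,\Lr_{X_H}J)$ up to a factor of $i$ and an explicit sign. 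The coefficient $\frac{1}{4}$ in the definition of $\rho$ is dictated precisely by requiring the two $\nu^2$ contributions to cancel, which completes the first part of the statement.

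For the ``Consequently'' part, set $\rho\equiv 1$ in the boxed identity: closedness of $*_{\omega,J}$ up to order $2$ is equivalent to $\Omega^{\J}_J(\Lr_{X_F}J,\Lr_{X_H}J) = 0$ for all $F,H\in C^\infty_0(M)$. By the moment map property this is in turn equivalent to $\int_M F\cdot X_H(Scal(J))\,\omega^n = 0$ for every $F\in C^\infty_0(M)$; since $X_H(Scal(J))$ has zero integral (being a Lie derivative along a symplectic vector field), this forces $X_H(Scal(J)) = 0$ for all $H$, hence $dScal(J) = 0$, i.e.\ constant scalar curvature. The converse is immediate: if $Scal(J)$ is constant then $\int_M \{F,H\}\,Scal(J)\,\omega^n = Scal(J)\int_M\{F,H\}\,\omega^n = 0$. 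The only real technical nuisance is the sign-and-$i$ bookkeeping needed to verify the exact cancellation of the two $\nu^2$ contributions, which is dictated by the conventions for $\{\cdot,\cdot\}$, for the action of $\symp(M,\omega)$ on $\J(M,\omega)$, and for the sign in the definition of the moment map; the structural argument is otherwise identical to the Fedosov case treated in Proposition \ref{prop:trace}.
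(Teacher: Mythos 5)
Your proposal is correct and follows essentially the same route as the paper's proof: expand $\int_M [F,H]_{*_{\omega,J}}\rho\,\omega^n$ to order $\nu^2$, obtaining the two terms $\Omega^{\J}_J(\Lr_{X_F}J,\Lr_{X_H}J)$ and $\int_M\{F,H\}\,Scal(J)\,\frac{\omega^n}{n!}$, and cancel them against each other using the moment map equation (\ref{eq:scalmoment}). Your elaborations — the explicit derivation of the equivariance consequence via the Hamiltonian flow of $H$, and the argument for the ``Consequently'' clause, which the paper's proof leaves implicit — are sound and consistent with what the paper does for the Fedosov case in Proposition \ref{prop:trace}.
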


\begin{proof}
For $F,H \in C^{\infty}(M)$ :
$$\int_M (F*_{\omega,J}H-H*_{\omega,J}F)\rho \omega^n = i\frac{\nu^2}{4} \left(\Omega^{\J}_J(\Lr_{X_F}J,\Lr_{X_H}J)+\int_M \{F,H\}Scal(J)\dvol\right)+O(\nu^3).$$
Using the moment map equation (\ref{eq:scalmoment}), we see the right hand side is in $O(\nu^3)$.
\end{proof}

Now, we fix the complex structure of $(M,\omega,J)$ and let vary the K\"ahler form inside $\MOm$. One defines the Calabi functional
$$\cal : \MOm \rightarrow \R : \omega_{\phi} \mapsto \int_M (Scal^{\nabla^{\phi}})^2 \frac{\omega_{\phi}^n}{n!},$$
where as before $\nabla^{\phi}$ is the Levi-Civita connection of $g_{\phi}$. Critical points of $\cal$ are called {\bf extremal K\"ahler metrics}.

\begin{prop}[Calabi \cite{cal}]
Extremal K\"ahler metrics in $\MOm$ form a submanifold of $\MOm$ whose connected component are $H_0(M,J)$-orbit.
\end{prop}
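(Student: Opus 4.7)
The plan is to mimic the strategy already used for Theorem \ref{theor:loctheor1}, replacing the Cahen--Gutt moment map $\mu$ on $\E(M,\omega)$ by the Hermitian scalar curvature as Donaldson--Fujiki moment map on $\J(M,\omega)$. Concretely, for a path $\omega_{\phi(t)}\in\MOm$, I would use Moser's trick (as in Lemma \ref{lemme:main}) to produce diffeomorphisms $f_t$ with $f_t^*\omega_{\phi(t)}=\omega$ and the associated path $J_t=f_t^{-1}.J\in\J_{int}(M,\omega)$. Since $f_t^* Scal^{\nabla^{\phi(t)}}=Scal(J_t)$, one rewrites $\cal(\omega_{\phi(t)})=\int_M Scal(J_t)^2\dvol$, and then equation (\ref{eq:scalmoment}) gives the first variation as a pairing with $\Lr_{X_{Scal(J_t)}}J_t$. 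Setting $t=0$ yields the Euler--Lagrange equation $\Lr_{X_{Scal(\nabla)}}J=0$, i.e.\ $X_{Scal}$ is a real holomorphic vector field --- this is Calabi's original extremal equation.

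Next I would linearize at an extremal metric $\omega$. Differentiating a second time, and using that at a critical point the boundary terms vanish, one gets a Hessian of the form
\begin{equation*}
d^2\cal_0(\psi,\psi)=2\int_M (\mathcal{L}^*\mathcal{L}\psi)\,\psi\,\dvol,
\end{equation*}
where $\mathcal{L}\psi:=\bar\partial\grad^{1,0}\psi$ is the Lichnerowicz operator. The operator $\mathcal{L}^*\mathcal{L}$ is a fourth-order elliptic, self-adjoint, non-negative operator with smooth coefficients depending analytically on $\phi$ and its derivatives up to some finite order, whose kernel consists exactly of the Killing potentials, i.e.\ of those $\psi\in C_0^\infty(M)$ for which $X_\psi$ is the imaginary part of an element of $\mathfrak{h}_0$. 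This identifies $\ker\mathcal{L}^*\mathcal{L}$ with the tangent space at $\omega$ to the $H_0(M,J)$-orbit.

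The remaining step is then a verbatim transcription of the argument used for Theorem \ref{theor:loctheor1}: extend the map $\widetilde{d\cal}(\phi):=2\mathcal{L}_\phi^*\mathcal{L}_\phi\cdot(\text{normalized scal})$ to an appropriate Sobolev completion $H_0^l(M)\to H_0^{l-N}(M)$ for $l$ large enough, and apply the implicit function theorem for Banach manifolds. Ellipticity gives the $L^2$-orthogonal splitting $H_0^{l-N}(M)\cong \mathcal{L}^*\mathcal{L}(H_0^l(M))\oplus \ker\mathcal{L}^*\mathcal{L}$, so $\widetilde{d\cal}^{-1}(\ker\mathcal{L}^*\mathcal{L})$ is a smooth finite-codimensional submanifold with tangent space $\ker\mathcal{L}^*\mathcal{L}$. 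Since $\cal$ is invariant under the action of $H_0(M,J)$ (the connection and curvature transforming equivariantly under biholomorphisms), the $H_0(M,J)$-orbit $\mathcal{O}$ of $\omega$ is contained in the critical set, and its tangent space is contained in $\ker\mathcal{L}^*\mathcal{L}$. A dimension count identifies them, whence $\mathcal{O}\cap U=\widetilde{d\cal}^{-1}(0)\cap U$ for a small neighbourhood $U$. Elliptic regularity guarantees the Sobolev solutions are smooth.

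The main technical obstacle, as in the proof of Theorem \ref{theor:loctheor1}, is ensuring that the Sobolev framework is consistent with the nonlinear nature of $\widetilde{d\cal}$: one must check that $\cal$ and its first differential genuinely extend to smooth maps between Hilbert manifolds of high enough regularity, and that the submanifold produced by the implicit function theorem consists of \emph{smooth} K\"ahler potentials (using elliptic bootstrapping on $\mathcal{L}^*\mathcal{L}$). The other delicate point, specific to the non-CSC extremal case, is verifying that $\ker\mathcal{L}^*\mathcal{L}$ coincides with the space of Killing potentials giving rise to $\mathfrak{h}_0$, and not a strictly larger space --- this is Calabi's classical identification of the kernel and does not require a Condition C analogue here, because the scalar curvature moment map is non-degenerate along holomorphic directions on all of $\J_{int}(M,\omega)$.
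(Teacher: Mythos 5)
First, a point of comparison: the paper does not prove this proposition at all --- it is imported from Calabi \cite{cal} as a known result --- so there is no internal proof to measure your attempt against. Your strategy (Moser's trick to pass to $\J_{int}(M,\omega)$, the Donaldson--Fujiki moment map identity (\ref{eq:scalmoment}) to derive the Euler--Lagrange equation $\Lr_{X_{Scal}}J=0$, then linearisation and the implicit function theorem in Sobolev spaces exactly as in Theorem \ref{theor:loctheor1}) is the standard modern route to Calabi's theorem, and your closing observation is exactly the right one: no analogue of Condition C is needed because $\Omega^{\J}_J(A,JA)=G^{\J}_J(A,A)$ is definite on all of $T_J\J(M,\omega)$, whereas $(\LC^*\Omega^{\E})_J(A,JA)$ is not, which is precisely why the paper must assume $Ric\geq 0$.

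The genuine gap is your Hessian formula. The identity $d^2\cal_0(\psi,\psi)=2\int_M(\mathcal{L}^*\mathcal{L}\psi)\,\psi\,\dvol$ is the analogue of the second half of Proposition \ref{prop:dF}, whose derivation uses $\mu^{\phi}(\nabla^{\phi})=0$, i.e.\ that one sits at a \emph{zero} of the normalised moment map, not merely at a critical point of its norm squared. An extremal metric need not have constant scalar curvature, and at a non-CSC extremal metric the terms of $\frac{d}{dt}|_0\,\widetilde{d\cal}(\phi+t\psi)$ in which the derivative falls on the operator coefficients or on the volume form are multiplied by the nonconstant scalar curvature and do not vanish (there are no ``boundary terms'' to discard on a closed manifold). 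The correct linearisation at an extremal metric is $2\mathcal{L}^*\mathcal{L}+N$, where $N$ is a first-order correction built from $X_{Scal}$; one must then argue --- this is the content of Calabi's second-variation computation --- that $N$ is of lower order (so ellipticity and the Fredholm splitting survive) and is $L^2$-skew-adjoint because $X_{Scal}$ is Hamiltonian and Killing, whence any $\psi$ in the kernel of the full linearisation satisfies $\mathcal{L}\psi=0$, and equality of that kernel with $T_{\omega}\mathcal{O}$ follows because the orbit consists of critical points. As written, your argument only establishes the proposition on components containing a constant scalar curvature metric; the non-CSC case is exactly the part of Calabi's theorem that requires this extra step.
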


When a K\"ahler metric $\omega$ is of constant scalar curvature, then it is extremal and any metrics in its $H_0(M,J)$-orbit is of constant scalar curvature. In terms of star product, it translates into the following :

\begin{cor}
Let $(M,\omega,J)$ be a closed K\"ahler manifold. Consider the star products $*_{\tilde{\omega},J}$ for $\tilde{\omega}\in \MOm$.
Then, the $\tilde{\omega}\in \MOm$ such that $*_{\tilde{\omega},J}$ is closed up to order $2$ form a submanifold of $\MOm$ whose connected components are $H_0(M,J)$-orbits.
\end{cor}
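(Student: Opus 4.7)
The strategy is to combine the preceding proposition (which identifies the ``closed up to order $2$'' condition with constant scalar curvature of $g_{\widetilde\omega}(\cdot,\cdot) = \widetilde\omega(\cdot,J\cdot)$) with the theorem of Calabi already invoked above, which asserts that the extremal K\"ahler metrics in $\MOm$ form a submanifold whose connected components are precisely the $H_0(M,J)$-orbits. Since every constant scalar curvature K\"ahler (cscK) metric is in particular extremal, the cscK metrics sit inside this submanifold of extremal metrics, and one only has to identify them as a union of its connected components.

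The first step is purely formal: from the previous proposition, the set
$$
\mathcal{Z}:=\{\widetilde\omega\in \MOm\ |\ *_{\widetilde\omega,J} \text{ is closed up to order } 2\}
$$
coincides with the set of $\widetilde\omega\in\MOm$ for which the K\"ahler metric $g_{\widetilde\omega}$ has constant scalar curvature. In particular, $\mathcal{Z}$ is contained in the extremal submanifold $\mathcal{E}xt\subset \MOm$ given by Calabi's theorem.

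The key step is to show that $\mathcal{Z}$ is a union of connected components of $\mathcal{E}xt$, which, since the components of $\mathcal{E}xt$ are $H_0(M,J)$-orbits, amounts to the following all-or-nothing statement: if $\omega,\omega'$ lie in the same $H_0(M,J)$-orbit and $\omega$ is cscK, then $\omega'$ is cscK. For this, write $\omega' = f^*\omega$ for some $f\in H_0(M,J)$; since $f$ is a biholomorphism of $(M,J)$, one has $Scal^{\nabla'} = Scal^{\nabla}\circ f$, and constancy of a function is preserved by composition with a diffeomorphism in both directions. Hence cscK is constant on each $H_0(M,J)$-orbit of extremal metrics, so $\mathcal{Z}$ is a union of such orbits, i.e.\ a union of connected components of $\mathcal{E}xt$. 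As such it is itself a submanifold of $\MOm$, and by construction its connected components are $H_0(M,J)$-orbits, giving the corollary.

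The only conceptual step is the reduction of ``closed up to order $2$'' to cscK via the preceding proposition; once this is in hand, the assertion about the structure of $\mathcal{Z}$ is an immediate consequence of Calabi's theorem together with the trivial equivariance observation for scalar curvature, so there is really no substantial obstacle. If anything, the subtle point to verify carefully is that the $H_0(M,J)$-action genuinely preserves $\MOm$ (because $H_0(M,J)$ is connected it acts trivially on $H^2(M;\mathbb R)$, so $f^*\omega$ remains in the class $\Theta$), ensuring that the orbits of cscK metrics stay inside $\MOm$ and inherit the submanifold structure from $\mathcal{E}xt$.
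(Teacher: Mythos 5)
Your proposal is correct and follows essentially the same route as the paper: reduce ``closed up to order $2$'' to constant scalar curvature via the Karabegov proposition, observe that cscK metrics are extremal and that the cscK property is preserved along $H_0(M,J)$-orbits, and then invoke Calabi's theorem to conclude that these metrics form a union of connected components of the extremal submanifold. The paper leaves these steps as a one-line remark, whereas you spell out the equivariance of the scalar curvature and the fact that $H_0(M,J)$ preserves the K\"ahler class; both points are accurate.
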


\subsubsection{Fedosov's star products $*_{\nabla,\nu\chi}$}

Here, we consider Fedosov's star products $*_{\nabla,\nu\chi}$ on a symplectic manifold $(M,\omega)$ built using the data of a symplectic connection and a closed $2$-form $\nu\chi\in \nu\Omega^2(M)\subset \nu\Omega^2(M)[[\nu]]$.

For $F,H \in C^{\infty}(M)$, one has
$$F*_{\nabla,\nu\chi}H=FH+\frac{\nu}{2}\{F,H\} + \nu^2 \left(\frac{1}{2}\chi(X_F,X_H)+ k'\Lambda^{i_1j_1}\Lambda^{i_2j_2}(\nabla^2F)_{i_1i_2}(\nabla^2H)_{j_1j_2}\right) + O(\nu^3).$$
The $*_{\nabla,\nu\chi}$-commutator writes 
$$[F,H]_{*_{\nabla,\nu\chi}}=\nu \{F,H\} + \nu^2 \chi(X_F,X_H)+O(\nu^3).$$
And one can check that the trace density for $*_{\nabla,\nu\chi}$ writes $\rho=1+\nu \Lambda^{ij}\chi_{ij}+O(\nu^2)$.

We recall a moment map construction from Donaldson \cite{Donald2}. Assume now that $\chi$ is non-degenerate (i.e. a symplectic form). Consider $\M:=\diff_0(M)$ the identity component of the group of diffeomorphisms of $M$. Then the tangent space $T_f\M=\Gamma(f^*TM)$. The symplectic form $\Omega^{\M}$ on $\M$ is defined by 
$$\Omega^{\M}_f(U,V):= \int_M \chi_{f(x)}(U_x,V_x)\dvol,$$
for $f\in \M$ and $U,V \in \Gamma(f^*TM)$. The group $\ham(M,\omega)$ acts on the right on $\diff_0(M)$ preserving $\Omega^{\M}$. The fundamental vector fields of this action are $(X_F)^{*\M}_f:=f_* X_F$ for $F\in C^{\infty}_0(M)$.
Define the map $\mu:\M\rightarrow C^{\infty}(M)$ by
$$\mu(f) \dvol= - (f^*\chi)\wedge \dsubvol.$$ 
Donaldson \cite{Donald2} shows this map is a moment map, that is 
$$\frac{d}{dt}|_0 \int_M F\mu(f_t)\dvol = \Omega^{\M}_{f_0}((X_F)^{*\M},V),$$
for $F\in C^{\infty}_0(M)$, $f_t$ a smooth path in $\diff_0(M)$ and $V:=\frac{d}{dt}|_0 f_t\in \Gamma(f_0^*TM)$.

In conclusion, when $\chi$ is non-degenerate, the trace density for $*_{\nabla,\nu f^*\chi}$, where $f\in \diff_0(M)$ writes $\rho^{*_{\nabla,\nu f^*\chi}}:= 1+\nu 2\mu(f)+ O(\nu^2)$.

\end{document}